%% file: man.tex
\documentclass[a4paper,11pt]{article}

\usepackage{amsmath,amssymb,amsthm,bm,mathtools,mathrsfs}
\usepackage{enumitem}
\usepackage[margin=30mm]{geometry}
\usepackage{graphicx,xcolor}
\usepackage{tikz}
\usetikzlibrary{arrows.meta}
\usepackage{float}
\usepackage{amsmath,amssymb,bm,amsthm}
\usepackage{ascmac}
\usepackage{array}
\usepackage[hang,small,bf]{caption}
\usepackage[subrefformat=parens]{subcaption}
\usepackage{natbib}
\usepackage{hyperref}
\hypersetup{
  colorlinks=true,
	citecolor=blue,
	linkcolor=red,
	urlcolor=orange,
}

\numberwithin{equation}{section}
\allowdisplaybreaks[3]

\newtheorem{theorem}{Theorem}[section]
\newtheorem{proposition}[theorem]{Proposition}
\newtheorem{lemma}[theorem]{Lemma}
\newtheorem{corollary}[theorem]{Corollary}

\theoremstyle{definition}
\newtheorem{definition}[theorem]{Definition}
\theoremstyle{remark}
\newtheorem{remark}[theorem]{Remark}
\newtheorem{example}[theorem]{Example}

\renewcommand{\theta}{\vartheta}

\title{Finite Cumulant Towers for Small-System-Noise Expansions in Nonlinear Filtering}
\author{Masahiro Kurisaki\thanks{Email: \href{mailto:masahiro.kurisaki@riken.jp}{\texttt{masahiro.kurisaki@riken.jp}}.}\\[0.4em]
  \small RIKEN Center for Advanced Intelligence Project}
\date{}

\begin{document}
\maketitle

\begin{abstract}
We study asymptotic expansions for nonlinear filtering in the small-system-noise regime. Our main result is that the expansion coefficients of conditional cumulants satisfy a closed finite-dimensional system that can be solved recursively. At each expansion degree, the evolution of a coefficient introduces a coefficient of the next higher cumulant rank, while a high-rank vanishing property terminates this hierarchy, yielding a finite triangular tower. The first layer is the Kalman--Bucy filter, and subsequent layers satisfy linear equations once the lower layers are known. To derive these equations, we establish generalized Zakai and Kushner--Stratonovich equations for observation-dependent random variables and obtain explicit coefficient equations in arbitrary dimension. The conditional cumulant expansion also determines the expansion of the conditional characteristic function. We further obtain an explicit bound on the number of scalar variables in terms of the state dimension and expansion degree. When the observation function vanishes, the construction reduces to ODEs for unconditional cumulant coefficients. Comparisons with existing asymptotic-expansion algorithms illustrate the resulting reduction in the number of ODE variables.
\end{abstract}

\section{Introduction}\label{section:introduction}

Filtering is the problem of estimating the conditional mean or distribution of an unobserved state $\{X_t\}_{t\geq 0}$ from a noisy observation $\{Y_t\}_{t\geq0}$. In the standard continuous-time setup, the state and observation processes $X$ and $Y$ are represented by the stochastic differential equations
\begin{align*}
  &dX_t=a(X_t)dt+b (X_t)dV_t,\\
  &dY_t=h(X_t)dt+\sigma(t)dW_t,
\end{align*}
where $V$ and $W$ are independent standard Brownian motions. In a linear Gaussian model, the conditional distribution $\pi_t$ of $X_t$ given $\mathcal{Y}_t=\sigma(Y_s;s\leq t)$ is Gaussian, and the evolution of its mean and covariance is determined by the finite-dimensional Kalman--Bucy equations \citep{kalman1961,bain_crisan2009filtering}.

For general nonlinear models, the evolution of the conditional distribution generally involves higher conditional moments. \citet{stratonovich1960,kushner1964} derived the Kushner--Stratonovich equation to describe the evolution of the conditional distribution $\pi_t$ under suitable conditions:
\begin{align*}
  d\pi_t(f)
  ={}&\pi_t(\mathcal L f)\,dt+\bigl\{\pi_t(fh^\top)-\pi_t(f)\pi_t(h)^\top\bigr\}
    (\sigma(t)\sigma(t)^\top)^{-1}\bigl\{dY_t-\pi_t(h)\,dt\bigr\},
\end{align*}
where
\begin{align*}
  &\pi_t(f)=E[f(X_t)|\mathcal{Y}_t],\\
  &\mathcal L f(x)
  =\sum_{i=1}^{d_1}a_i(x)\partial_i f(x)
    +\frac{1}{2}\sum_{i,j=1}^{d_1}
      (bb^\top)_{ij}(x)\partial_i\partial_j f(x).
\end{align*}
See also \citet{Liptser2001,bain_crisan2009filtering} for the proof of this equation. In particular, the evolution of the conditional expectation $E[X_t|\mathcal{Y}_t]$ requires conditional expectations involving $a(X_t)$ and $X_t h(X_t)^\top$, and applying the same equation to these quantities generally introduces an infinite number of variables. Thus, successive calculation of conditional moments generally leads to an infinite-dimensional problem \citep{HAZEWINKEL1983331}.

Under suitable regularity conditions, the Kushner--Stratonovich equation can be written as a stochastic partial differential equation for the conditional density, while the Zakai equation governs an unnormalized conditional density \citep{Zakai1969,bain_crisan2009filtering}. A straightforward computational approach is to apply numerical methods to these equations or their pathwise representations. Finite-difference \citep{kushner1976finite}, finite-element \citep{Germani01021988}, and spectral and Wiener-chaos methods \citep{lototsky1997spectral,lototsky2006wiener,lototsky2011chaos} have been developed for this purpose. However, standard tensor-product spatial discretizations with $N$ grid points or basis functions per coordinate require $N^{d_1}$ degrees of freedom. This exponential growth makes their application to high-dimensional filtering problems computationally difficult.

To avoid discretizing the entire state space, particle filters have become widely used alternatives \citep{gordon1993,4378823}. They approximate the conditional distribution by weighted particles and resampling, allowing general nonlinear and non-Gaussian models to be treated. Nevertheless, standard importance-sampling implementations suffer from weight degeneracy: in representative high-dimensional settings, preventing particle collapse requires a number of particles that grows exponentially with an effective dimension \citep{snyder2008obstacles}.

Computationally cheaper alternatives use linearization or Gaussian-based updates, as in the extended Kalman filter (EKF) \citep{picard1991} and the ensemble Kalman filter (EnKF) \citep{2003OcDyn..53..343E}. In particular, the EnKF combines nonlinear forecast dynamics with covariance-based linear updates and is widely used in high-dimensional geophysical applications \citep{evensenDataAssimilationEnsemble2009}. Quantifying the discrepancy from the true posterior distribution in general nonlinear models remains difficult. Rigorous accuracy results are available under additional assumptions, such as small noise for the EKF and full observations with small measurement noise for the ensemble Kalman--Bucy filter \citep{picard1991,dewiljesLongTimeStability2018}. Their accuracy can nevertheless deteriorate when strong nonlinearity produces substantial non-Gaussian features that are not captured by the linearization or the Kalman update \citep{2003OcDyn..53..343E}.

Against this background, small-noise expansions were proposed in \citet{kurisaki2026ks,kurisaki2026recursions}. In these works, we introduce a small parameter $\epsilon$ in the system noise:
\begin{align}
  &dX_t^\epsilon=a(X_t^\epsilon,0)dt+\epsilon\partial_\epsilon b(X_t^\epsilon,0)dV_t,\label{eq:intro-X}\\
  &dY_t^\epsilon=h(X_t^\epsilon,0)dt+\sigma(t)dW_t.\label{eq:intro-Y}
\end{align}
Then, we consider the expansion of the conditional expectation $E[X_t^\epsilon|\mathcal{Y}_t^\epsilon]$ with respect to $\epsilon$, where $\mathcal{Y}_t^\epsilon=\sigma(Y_s^\epsilon;s\leq t)$. In \citet{kurisaki2026ks}, we presented a rigorous justification of this expansion from the Kallianpur--Striebel formula, and a concrete formula to compute the expansion coefficients was given in \citet{kurisaki2026recursions}. The key observation is that the first-order approximation of \eqref{eq:intro-X} and \eqref{eq:intro-Y} is represented as a linear Gaussian model. Thus, we expand around this Gaussian reference, rather than using a direct Taylor expansion, so that the first-order approximation is the Kalman--Bucy filter.

The key advantage of the expansion approach is that the computation of the expansion coefficients reduces to a finite-dimensional system, and the number of required variables grows at most polynomially in the dimensions of $X$ and $Y$, rather than exponentially. In fact, \citet{kurisaki2026recursions} proved the finite-dimensional closure and provided a bound on the system dimension using Wick's formula and a generalized It\^o formula. Furthermore, higher-order corrections can improve accuracy beyond Gaussian-based approximations such as the EnKF. In the Lorenz--63 experiments of \citet{kurisaki2026lorenz}, the second-order method with recentering and redistribution achieved accuracy close to that of a particle filter under full observation, with lower truth RMSE than the ensemble transform Kalman filter (ETKF), although the method could become unstable when fewer state components were observed.

In the Wick-based construction, finite-dimensional closure follows relatively directly from the linear smoothing equations. However, deriving the evolution equations requires decomposing the moments in the expansion coefficients into products of means and covariances, differentiating the resulting terms, and then recombining them. This introduces many intermediate quantities and makes both the derivation and the resulting equations cumbersome, especially in multiple dimensions, which suggests that a more economical structure may underlie the individual Wick contractions.

In this work, motivated by this closure structure, we seek to identify that underlying structure and derive the evolution equations through a different route, without carrying out the Wick decomposition. Specifically, we apply the Kushner--Stratonovich equation successively to the expansion coefficients of the conditional expectation $E[X_t^\epsilon|\mathcal{Y}_t^\epsilon]$, rather than to the conditional expectation itself. However, the random variables inside the conditional expectations defining these coefficients depend on the observation path $Y$, so the usual equation does not apply directly. We therefore establish a generalized version allowing this observation dependence and prove that its successive application yields a finite closed system at each fixed expansion order. This yields explicit evolution equations for all quantities required to compute the expansion in the general multidimensional setting.

Moreover, the resulting equations not only provide another proof of the closure structure but also reveal a deeper structure related to the conditional distribution. The auxiliary variables introduced by the successive application of the Kushner--Stratonovich equation to the conditional expectation are themselves expansion coefficients of the conditional covariance and higher conditional cumulants. Thus, computing the conditional mean expansion simultaneously provides the coefficients of an expansion of the conditional distribution through its characteristic function. This also yields a sharper bound on the number of required variables than the one given in \citet{kurisaki2026recursions}.

Our expansion includes the unconditional expectation $E[f(X_t^\epsilon)]$ as a special case with $h=0$. Such expansions have been studied for decades in the context of option pricing in finance \citep{yoshida1992statistics,takahashi1999contingent,uchida2004option}. In particular, \citet{takahashi2012computation} compute characteristic-function expansion coefficients by deriving and solving ODEs for moments of products of diffusion expansion coefficients weighted by a Gaussian exponential. Our method organizes the required information directly through ODEs for cumulant coefficients, from which the characteristic-function expansion is recovered algebraically. This aggregation can substantially reduce the dimension of the ODE system compared with formulations that retain the individual weighted moments.

Section~\ref{section:overview} summarizes the tower structure. Section~\ref{section:preliminaries} recalls the model and the conditional expectation expansion. Section~\ref{section:cumulant-coefficients} introduces the cumulant coefficients and proves their high-rank vanishing property. Section~\ref{section:generalized-filtering-equations} establishes the generalized filtering equations, and Section~\ref{section:coefficient-evolution} derives the finite towers and the characteristic-function expansion. Section~\ref{section:unconditional} discusses unconditional expectations.

\section{Overview of the main results}\label{section:overview}
We briefly describe the structure of the expansion before giving its construction. Let us consider the small-noise system given by \eqref{eq:intro-X} and \eqref{eq:intro-Y}, and denote the coefficient of $\epsilon^k$ in the expansion of ${\rm cum}_r(X_t^\epsilon|\mathcal{Y}_t^\epsilon)$ by $A_{r,t}^{[k]}$. Then, we can write
\begin{align*}
  {\rm cum}_r(X_t^\epsilon|\mathcal{Y}_t^\epsilon)=A_{r,t}^{[0]}+A_{r,t}^{[1]}\epsilon+\cdots+A_{r,t}^{[n]}\epsilon^n+O_P^T(\epsilon^{n+1}).
\end{align*}
For the precise meaning of $O_P^T(\epsilon^{n+1})$, see Theorem \ref{theorem:filter-expansion}. In particular, the case of $r=1$ gives the expansion of the conditional expectation:
\begin{align*}
  E[X_t^\epsilon|\mathcal{Y}_t^\epsilon]=A_{1,t}^{[0]}+A_{1,t}^{[1]}\epsilon+\cdots+A_{1,t}^{[n]}\epsilon^n+O_P^T(\epsilon^{n+1}).
\end{align*}
Define the degree of the coefficient $A_{r,t}^{[k]}$ as
\begin{align*}
  {\rm deg}A_{r,t}^{[k]}=k-r+1.
\end{align*}
Then, our main result is stated as follows.
\begin{theorem}
  Under suitable assumptions, for $1\leq r<k$, the coefficient $A_{r,t}^{[k]}$ satisfies an equation of the form
  \begin{align*}
    dA_{r,t}^{[k]}
    ={}&\left\{\mathcal L_{r,t}\bigl[A_{r,t}^{[k]}\bigr]+F_{r,t}^{[k]}\right\}dt\\
    &+\left\{\mathcal H_{r,t}\bigl[A_{r+1,t}^{[k+1]}\bigr]+G_{r,t}^{[k]}\right\}
    \left\{dY_t^\epsilon-\left(h(X_t^{[0]},0)
    +\epsilon\partial_x h(X_t^{[0]},0)A_{1,t}^{[1]}\right)dt\right\}.
  \end{align*}
  Here, $\mathcal L_{r,t}$ and $\mathcal H_{r,t}$ are linear maps determined by the model and the Gaussian reference filter, $A_{r+1,t}^{[k+1]}$ is a new variable of degree ${\rm deg}A_{r,t}^{[k]}=k-r+1$, and the terms $F_{r,t}^{[k]}$ and $G_{r,t}^{[k]}$ are explicit polynomials in coefficients of degree less than $k-r+1$, with coefficients determined by the model. 
\end{theorem}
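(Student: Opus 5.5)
Our plan is to realize each coefficient $A_{r,t}^{[k]}$ as a conditional expectation of an observation-dependent random variable, and then differentiate it with a generalized Kushner--Stratonovich equation. First, we expand $X_t^\epsilon=X_t^{[0]}+\epsilon X_t^{[1]}+\cdots$ around the Gaussian reference. Here $X^{[0]}$ is the deterministic noiseless flow and $X^{[1]}$ solves the linearized (linear Gaussian) SDE. Each $X^{[j]}$, $j\ge 2$, is a polynomial in $X^{[1]}$ driven by linear SDEs. We likewise expand the Kallianpur--Striebel likelihood around the Gaussian reference likelihood, so that the reference conditional law of $X^{[1]}$ given the observations is the Kalman--Bucy law.

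Next, we expand ${\rm cum}_r(X_t^\epsilon|\mathcal Y_t^\epsilon)$ in $\epsilon$, using multilinearity of cumulants in the expansion of $X^\epsilon$ together with the expansion of the likelihood weight. This writes $A_{r,t}^{[k]}$ as a finite sum of joint conditional cumulants, under the Gaussian reference filter, of polynomials in $X_t^{[1]}$ and of observation-dependent weight terms. The high-rank vanishing property of Section~\ref{section:cumulant-coefficients} will be used as a degree count. Under the Gaussian reference, joint cumulants of order greater than two of linear functionals vanish, so each contribution to $A_{r,t}^{[k]}$ carries at least $r-1$ ``connections''. This forces the genuine degree to be $k-r+1$, which is what makes the grading ${\rm deg}A_{r,t}^{[k]}=k-r+1$ meaningful.

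With this representation, we apply the generalized Zakai/Kushner--Stratonovich equations of Section~\ref{section:generalized-filtering-equations}, valid for random variables depending on the observation path, to each conditional moment. We then reassemble the results into cumulants via the moment--cumulant (Leibniz-type) formula for $d\,{\rm cum}_r$. The $dt$-part comes from the generator of the linearized dynamics together with the It\^o corrections of the observation-dependent integrands. Its leading piece is linear in $A_{r,t}^{[k]}$, namely the drift of the $r$-fold tensor under the linearization $\partial_x a(X_t^{[0]},0)$ plus covariance terms, and this defines $\mathcal L_{r,t}$. The remaining lower-degree pieces are collected in $F_{r,t}^{[k]}$. The innovation part has gain $\pi(\phi h^\top)-\pi(\phi)\pi(h)^\top$, i.e.\ a joint cumulant of the quantity with $h$. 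Expanding $h$ around $X^{[0]}$, the linear term $\partial_x h\,X^{[1]}$ raises both the cumulant rank and the $\epsilon$-order by one. This produces exactly $\mathcal H_{r,t}[A_{r+1,t}^{[k+1]}]$, which has the same degree. Higher Taylor terms of $h$, and the corrections from replacing $\pi_t(h)$ by $h(X^{[0]})+\epsilon\partial_xh\,A_1^{[1]}$ in the innovation, give $G_{r,t}^{[k]}$ in strictly lower degree.

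The main obstacle is the degree bookkeeping. We must show that every term other than the two displayed linear ones involves only coefficients of degree $<k-r+1$. In particular, the nonlinear products that arise from the cumulant Leibniz formula, and the terms from the quadratic variation of the innovation, must not reintroduce same-degree unknowns. The first step will be to verify that degree is additive under the products appearing in $d\,{\rm cum}_r$ (a product of cumulants $A_{r_1}^{[k_1]}\cdots A_{r_m}^{[k_m]}$ with $\sum r_i$ related to $r$ has degree summing with a deficit of $m-1$ or more). We also need the restriction $r<k$ to guarantee that $k-r+1\ge 2$, so that all the Kalman--Bucy-level contributions, of degree $\le 1$, sit in the known lower layers. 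Once additivity is shown, the form of the equation follows by matching orders of $\epsilon$ in the cumulant expansion.
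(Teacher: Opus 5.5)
Your ingredients match the paper's: the tilted-expectation representation of the coefficients, the generalized Kushner--Stratonovich equation of Theorem~\ref{theorem:generalized-KS}, and connected-diagram vanishing. However, the step that carries the theorem is asserted rather than derived. That step is showing that the $dt$ and $d\nu_t$ coefficients produced by the filtering equation are polynomials in the tower variables at all.

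Each $\mathsf A_{r,t}^{[k]}$ is not a single conditional cumulant. It is a weighted aggregate, over all splittings, of mixed cumulants $\operatorname{cum}_{\tilde{\mathsf E}_t}(X_t^{[p_1]},\ldots,X_t^{[p_r]},N_t^{[q_1]},\ldots,N_t^{[q_m]})$ (Corollary~\ref{cor-A-expression-cumulant}). Moreover, the $X_t^{[p]}$ and the weights are path functionals, not polynomials in $X_t^{[1]}$, so their tilted moments are not fixed by the Kalman--Bucy law.

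Applying the filtering equation moment by moment yields tilted moments of the nonlinear drifts $a_t^{[j,q]}[X_t^{[i_1]}\otimes\cdots\otimes X_t^{[i_j]}]$ multiplied by further $X$'s and weights. It also yields $b$-terms from the $\mathsf Y$-dependence of the weights. In the generalized equation, the higher derivatives of $h$ enter through these $b$-terms, not through a classical gain $\pi(\phi h^\top)-\pi(\phi)\pi(h)^\top$.

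A Leibniz formula for $d\operatorname{cum}_r$ gives no reason why these terms regroup, with the right combinatorial weights, into products of the aggregated $\mathsf A$'s. They could just as well produce new mixed cumulants outside the tower. The paper's device here is the generating function $K_t(u,z)$ built from $\mathbb X(z)$ and $\mathbb J(z)$:
\begin{itemize}
\item It\^o's formula for its truncated logarithm (Proposition~\ref{prop:evolution-of-K}) expresses $dK_t$ through $\theta_t^{u,z}(\cdot)$.
\item Proposition~\ref{prop:R-Bell} identifies $\theta_t^{u,z}(\mathbb X(z)^{\otimes k})$ with derivatives of $\exp(K_t(u+x,z)-K_t(u,z))$, i.e.\ with the Bell-type polynomials $\mathsf B_{k,r,t}^{[m]}$ of Proposition~\ref{prop:Bell-recurrence}.
\end{itemize}
This is what makes $F_{r,t}^{[k]}$ and $G_{r,t}^{[k]}$ explicit polynomials, and what produces exactly $\lambda h_t^{[1,0]}[\mathsf A_{r+1,t}^{[k+1]}]$ in the gain.

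Note also that the final comparison must be of coefficients of a formal variable, with the reference filter's $\epsilon$ frozen as $\lambda$. ``Matching orders of $\epsilon$'' in an $O_P^T$ expansion does not allow term-by-term differentiation. It also discards the explicit $\epsilon$ that appears in $\mathcal L_{r,t}$, in $\mathcal H_{r,t}$, and in the innovation.

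Your degree argument is also insufficient exactly where you locate the difficulty. Degree additivity by itself only shows that, in the quadratic-variation term $-\frac{\lambda^2}{2}R(t)^{-1}[\cdot,\cdot]$, each factor has degree at most $m=k-r+1$, with equality only when the partner factor has degree one. Products of a known degree-one factor with a \emph{different} degree-$m$ unknown are therefore still allowed. Examples are $\mathsf A_{3,t}^{[3]}$ paired with $\mathsf A_{r-1,t}^{[k-1]}$, or $\mathsf A_{1,t}^{[1]}$ paired with $\mathsf A_{r+1,t}^{[k+1]}$ in the drift. These are not ``lower-layer'' terms, and they would break both the stated form and the triangular solution order.

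The paper excludes them through three specific facts:
\begin{itemize}
\item $\mathsf A_{p+1,t}^{[p+1]}=0$ for $p\geq2$, by Theorem~\ref{theorem:A-high-rank-vanishing} used via Remark~\ref{remark:Bell-vanishing}.
\item The $\mathsf A_{1,t}^{[1]}$ contribution cancels exactly, because the innovation is centred at $\tilde\pi_t(\tilde H)$.
\item The surviving product $\mathsf A_{2,t}^{[2]}\cdot\mathsf A_{r,t}^{[k]}$ is absorbed into $\mathcal L_{r,t}$ as $-\lambda^2\mathsf A_{2,t}^{[2]}(h_t^{[1,0]})^\top R(t)^{-1}h_t^{[1,0]}$.
\end{itemize}
Your sketch does not isolate any of these three facts, so it does not yet yield the claimed structure.
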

For the rigorous statement of this theorem, see Theorem \ref{theorem:coefficient-tower}. 

As a result, if coefficients of degree lower than $m$ are given, then the evolution of $A_{1,t}^{[m]}$ successively introduces new variables in the order
  \begin{align}
    A_{1,t}^{[m]},~A_{2,t}^{[m+1]},~A_{3,t}^{[m+2]},\cdots. \label{eq:degree-m-line}
  \end{align}
  Furthermore, another important result is that $A_{r,t}^{[k]}=0~~(k<2r-2)$ (see Theorem \ref{theorem:A-high-rank-vanishing}). This implies that the sequence of degree $m$ coefficients \eqref{eq:degree-m-line} actually terminates at $A_{m+1,t}^{[2m]}$, and all subsequent coefficients vanish. The resulting structure of the coefficient evolution is shown in Figure \ref{fig:cumulant-tower}. Note that the degree 0 variable $X_t^{[0]}$ is the solution of the deterministic equation
  \begin{align*}
    dX_t^{[0]}=a(X_t^{[0]},0)dt.
  \end{align*}

\begin{figure}[htbp]
  \centering
  \input{figures/cumulant_tower.tex}
  \caption{The first two diagonals of the cumulant tower, with the dependence on $t$ and $\epsilon$ in $A_{r,t}^{[k],\epsilon}$ suppressed. Columns align explicit powers of $\epsilon$. The arrows indicate the next coefficients introduced by the degree-one and degree-two evolution equations. The zeros are $A_3^{[3]}=0$ and $A_4^{[5]}=0$; the full conditional cumulants need not vanish.}
  \label{fig:cumulant-tower}
\end{figure}

  For example, consider the one-dimensional model 
  \begin{align}
  &dX_t^\epsilon=a(X_t^\epsilon,0)dt+\epsilon dV_t,\label{eq:1dim-X}\\
  &dY_t^\epsilon=cX_t^\epsilon dt+\sigma dW_t,\label{eq:1dim-Y}
\end{align}
where $c\in\mathbb R$ and $\sigma>0$ are constants and $X_0=Y_0=0$. Write
\begin{align*}
  d\nu_t&=dY_t^\epsilon-c\bigl(X_t^{[0]}+\epsilon A_{1,t}^{[1]}\bigr)dt,\qquad
  \Gamma_t=\partial_x a(X_t^{[0]},0)-\frac{\epsilon^2c^2}{\sigma^2}A_{2,t}^{[2]}.
\end{align*}
Then, the degree-one equations are
\begin{align*}
  dA_{1,t}^{[1]}&=\partial_x a(X_t^{[0]},0)A_{1,t}^{[1]}dt
      +\frac{\epsilon c}{\sigma^2}A_{2,t}^{[2]}d\nu_t,\\
  dA_{2,t}^{[2]}&=\left\{2\partial_x a(X_t^{[0]},0)A_{2,t}^{[2]}+1
      -\frac{\epsilon^2c^2}{\sigma^2}\bigl(A_{2,t}^{[2]}\bigr)^2\right\}dt.
\end{align*}
Here, the evolution of $A_{1,t}^{[1]}$ introduces a new coefficient $A_{2,t}^{[2]}$ in degree $1$, and the evolution of the new variable introduces no new variable. These equations coincide exactly with the Kalman--Bucy equations for the following centered linearized model:
\begin{align*}
  &d\bar{X}_t=\partial_x a(X_t^{[0]},0)\bar{X}_t dt+dV_t,\\
  &d\bar{Y}_t=\epsilon c\bar{X}_t dt+\sigma dW_t.
\end{align*}  

For the second-order approximation, the additional degree-two coefficients satisfy
\begin{align*}
  dA_{1,t}^{[2]}&=\left\{\Gamma_t A_{1,t}^{[2]}+\frac{\partial_x^2 a(X_t^{[0]},0)}{2}
        \left(A_{2,t}^{[2]}+\bigl(A_{1,t}^{[1]}\bigr)^2\right)\right\}dt
      +\frac{\epsilon c}{\sigma^2}A_{2,t}^{[3]}d\nu_t,\\
  dA_{2,t}^{[3]}&=\left\{2\Gamma_t A_{2,t}^{[3]}
      +2\partial_x^2 a(X_t^{[0]},0)A_{1,t}^{[1]}A_{2,t}^{[2]}\right\}dt
      +\frac{\epsilon c}{\sigma^2}A_{3,t}^{[4]}d\nu_t,\\
  dA_{3,t}^{[4]}&=\left\{3\Gamma_t A_{3,t}^{[4]}
      +3\partial_x^2 a(X_t^{[0]},0)\bigl(A_{2,t}^{[2]}\bigr)^2\right\}dt.
\end{align*}
Since $A_{4,t}^{[5]}=0$, this tower closes after three variables. Once the degree-one coefficients $A_{1,t}^{[1]}$ and $A_{2,t}^{[2]}$ are known, these equations are solved in the order $A_{3,t}^{[4]}$, $A_{2,t}^{[3]}$, $A_{1,t}^{[2]}$.

  \section{Problem setup and preliminary results}\label{section:preliminaries}
  \subsection{Setup}
Let $(\Omega,\mathcal{F},\{\mathcal{F}_t\}_{t\geq 0},P)$ be a filtered probability space, and let $\{X_{t}^\epsilon\}_{t\geq 0}$ and $\{Y_t^\epsilon\}$ be $d_1$- and $d_2$-dimensional processes, respectively, satisfying
\begin{align}
  \label{eq-eta}&dX_t^\epsilon=a(X_t^\epsilon,\epsilon)dt+b(X_t^\epsilon,\epsilon)dV_t,~~X_0^\epsilon=x_0,\\
  \label{eq-Y-theta}&dY_t^\epsilon=h(X_t^\epsilon,\epsilon)dt+\sigma(t)dW_t,~~Y_0^\epsilon=0,
\end{align}
where $a:\mathbb{R}^{d_1}\times(-1,1)\to \mathbb{R}^{d_1}$, $b:\mathbb{R}^{d_1}\times(-1,1)\to M_{d_1,m_1}(\mathbb{R})$ and $h:\mathbb{R}^{d_1}\times(-1,1)\to \mathbb{R}^{d_2}$ are of class $C^\infty$ with
\begin{align*}
  b(x,0)=0
\end{align*}
for every $x$, and $x_0 \in \mathbb{R}^{d_1}$ is a constant. 
Here, $V$ and $W$ are independent $m_1$- and $m_2$-dimensional Brownian motions, respectively, with respect to $\{\mathcal F_t\}_{t\geq0}$, and $\sigma(t)\in M_{d_2,m_2}(\mathbb R)$ is deterministic and measurable.

Furthermore, we assume that for any $k \in \mathbb{Z}_+$
\begin{align*}
  \sup_{\substack{x\in\mathbb R^{d_1},|\epsilon|<1}}\left(|\partial_x^k\partial_\epsilon^q a(x,\epsilon)|+|\partial_x^k\partial_\epsilon^q b(x,\epsilon)|\right)<\infty,\qquad q\geq0
\end{align*}
and for every $k\geq0$ there exist constants $C_{k}$ and $q_{k}$ such that
\begin{align}
  \label{eq-assumption-h}\sup_{|\epsilon|<1}|\partial_x^j\partial_\epsilon^{k-j}h(x,\epsilon)|\leq C_{k}(1+|x|^{q_{k}}),\qquad 0\leq j\leq k.
\end{align}
Here, for $\phi \in C^k(\mathbb{R}^m; \mathbb{R}^n)$, the $k$-th derivative $\partial_x^k \phi(x)$ is regarded as an $\mathbb{R}^n$-valued $k$-tensor whose $(i_1,\ldots,i_k)$-th component is given by
  \begin{align*}
      \frac{\partial}{\partial x_{i_1}} \cdots \frac{\partial}{\partial x_{i_k}} \phi(x_1, \cdots, x_m).
  \end{align*}  
Furthermore, assume that for \( t \geq 0 \),
\begin{align}
  \int_0^t |\sigma(s)|^2 \, ds < \infty,
\end{align}
and that there exists a constant \( C > 0 \) such that for every \( t \geq 0 \),
\begin{align}
  \label{eq-assumption-sigma-2} \lambda_{\min}(\sigma(t) \sigma(t)^\top) > C,
\end{align}
where \( \lambda_{\min}(\sigma(t) \sigma(t)^\top) \) denotes the smallest eigenvalue of \( \sigma(t) \sigma(t)^\top \).
Here, $|A|$ denotes the Frobenius norm for a matrix $A$.

  Under this setup, we consider the asymptotic expansion of 
  \begin{align}
    \label{eq-conditional-expectation}E[f(X_t^\epsilon)|\mathcal{Y}_t^\epsilon]
  \end{align}
  where $\{\mathcal{Y}_t^\epsilon\}_{t\geq 0}$ is the filtration generated by $\{Y_t^\epsilon\}_{t\geq 0}$ augmented by null sets, and $f:\mathbb{R}^{d_1}\to \mathbb{R}$ is a polynomial.

  \subsection{Kallianpur--Striebel formula}
  We use the Kallianpur--Striebel formula to expand
  \eqref{eq-conditional-expectation}. It gives an explicit expression for the
  conditional expectation.

  Let us fix $T>0$, and define a probability measure $Q^\epsilon$ by
\begin{align*}
  Q^\epsilon(A) = E\left[ 1_A (Z_T^\epsilon)^{-1} \right]
\end{align*}
for \( A \in \mathcal{F} \), where
\begin{align*}
  \begin{split}
    Z_t^\epsilon &= \exp\Biggl( \int_0^t h(X_s^\epsilon,\epsilon)^\top (\sigma(s)\sigma(s)^\top)^{-1} \sigma(s) \, dW_s\\
     &\qquad+ \frac{1}{2} \int_0^t h(X_s^\epsilon,\epsilon)^\top (\sigma(s)\sigma(s)^\top)^{-1} h(X_s^\epsilon,\epsilon) \, ds \Biggr) \\    
    &= \exp\Biggl( \int_0^t h(X_s^\epsilon,\epsilon)^\top (\sigma(s)\sigma(s)^\top)^{-1} \, dY_s^\epsilon\\
     &\qquad- \frac{1}{2} \int_0^t h(X_s^\epsilon,\epsilon)^\top (\sigma(s)\sigma(s)^\top)^{-1} h(X_s^\epsilon,\epsilon) \, ds \Biggr).
  \end{split}
\end{align*}
Then, the conditional expectation (\ref{eq-conditional-expectation}) can be represented by the following Kallianpur--Striebel formula.
\begin{proposition}[Kallianpur--Striebel formula]
  For any $0\leq t\leq T$ and any polynomial $f:\mathbb{R}^{d_1}\to \mathbb{R}$, we have
  \begin{align}
  \label{eq-Kallianpur-Striebel-epsilon}
  E[f(X_t^\epsilon)|\mathcal{Y}_t^\epsilon] =
  \frac{\displaystyle E_{Q^\epsilon}\left[ f(X_t^\epsilon) Z_t^\epsilon \middle| \mathcal{Y}_t^\epsilon \right]}
       {\displaystyle E_{Q^\epsilon}\left[ Z_t^\epsilon \middle| \mathcal{Y}_t^\epsilon \right]},
\end{align}
where $E_{Q^\epsilon}$ is the expectation under $Q^\epsilon$.
\end{proposition}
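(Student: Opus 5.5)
The plan is the standard Girsanov change-of-measure argument, followed by the abstract Bayes formula. The steps are: first show that $(Z_t^\epsilon)^{-1}$ is a true martingale, so that $Q^\epsilon$ is a probability measure. Next identify the law of $(X^\epsilon,Y^\epsilon)$ under $Q^\epsilon$. Finally, compute conditional expectations by the Bayes rule and use independence to restrict the conditioning to $\mathcal Y_t^\epsilon$.

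\emph{Step 1 (martingale property).} Write
\begin{align*}
(Z_t^\epsilon)^{-1}=\exp\Bigl(-\int_0^t \theta_s^\top dW_s-\tfrac12\int_0^t|\theta_s|^2ds\Bigr),
\qquad \theta_s=\sigma(s)^\top(\sigma(s)\sigma(s)^\top)^{-1}h(X_s^\epsilon,\epsilon).
\end{align*}
By \eqref{eq-assumption-sigma-2} we have $|\theta_s|\le C'|h(X_s^\epsilon,\epsilon)|$, and \eqref{eq-assumption-h} gives $|h|\le C_0(1+|x|^{q_0})$. Because $a$ and $b$ are bounded with bounded derivatives, $X^\epsilon$ has Gaussian-type tails, but $|X|^{q_0}$ with $q_0>1$ defeats a direct Novikov condition. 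So I will use the localization argument of Bain--Crisan (Lemma 3.9 there). Take a partition $0=t_0<\dots<t_N=T$ and apply Novikov on each short interval, conditioning on $\mathcal F_{t_i}$. If that fails, stop at $\tau_n=\inf\{t:|X_t^\epsilon|\ge n\}$ and show $E[(Z_{T\wedge\tau_n}^\epsilon)^{-1}]\to1$. To do this, view $(Z_{T\wedge\tau_n}^\epsilon)^{-1}$ as the density of a measure under which $X^\epsilon$ keeps its SDE, because $V$ and $W$ are independent and the drift change acts only on $W$. Then $Q_n(\tau_n\le T)=P(\tau_n\le T)\to0$. This is the main obstacle, since $h$ grows polynomially.

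\emph{Step 2 (law under $Q^\epsilon$).} Girsanov's theorem shows that under $Q^\epsilon$ the process $\tilde W_t=W_t+\int_0^t\theta_sds$ is a Brownian motion. Hence $Y_t^\epsilon=\int_0^t\sigma(s)d\tilde W_s$. The Girsanov kernel has no $V$-component, so $V$ remains a Brownian motion and is independent of $\tilde W$. Therefore $X^\epsilon$, which is a strong solution driven by $V$, has the same law as under $P$ and is independent of $Y^\epsilon$ under $Q^\epsilon$.

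\emph{Step 3 (Bayes formula).} Since $dP/dQ^\epsilon=Z_T^\epsilon$, the abstract Bayes rule gives
\begin{align*}
E[f(X_t^\epsilon)|\mathcal Y_t^\epsilon]=\frac{E_{Q^\epsilon}[f(X_t^\epsilon)Z_T^\epsilon|\mathcal Y_t^\epsilon]}{E_{Q^\epsilon}[Z_T^\epsilon|\mathcal Y_t^\epsilon]}.
\end{align*}
This requires $f(X_t^\epsilon)\in L^1(P)$, which holds because polynomial moments of $X^\epsilon$ are finite. Next I replace $Z_T^\epsilon$ by $Z_t^\epsilon$. First, $Z^\epsilon$ is a $Q^\epsilon$-martingale, which follows from Step 1 by a symmetric argument or by $E_{Q^\epsilon}[Z_T^\epsilon]=1$. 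Second, I need $E_{Q^\epsilon}[Z_T^\epsilon\,|\,\mathcal F_t\vee\mathcal Y_T^\epsilon]=Z_t^\epsilon$. This follows because increments of $\tilde W$ after $t$ are independent of $\mathcal F_t\vee\sigma(X^\epsilon)$. Conditioning first on $\mathcal F_t\vee\mathcal Y_T^\epsilon$ and then using the tower property yields the ratio with $Z_t^\epsilon$. If this conditioning is awkward, I will instead condition on $\mathcal F_t$ and use that $\mathcal Y_t^\epsilon\subset\mathcal F_t$.

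Integrability of $f(X_t^\epsilon)Z_t^\epsilon$ under $Q^\epsilon$ is immediate, because $E_{Q^\epsilon}[|f(X_t^\epsilon)|Z_t^\epsilon]=E[|f(X_t^\epsilon)|]<\infty$. The denominator is strictly positive almost surely, because $Z_t^\epsilon>0$.
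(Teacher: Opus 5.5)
This is the standard Girsanov--Bayes proof, which is what the paper relies on: it states the formula without proof as a classical result (see e.g.\ \citet{bain_crisan2009filtering}). Your localization correctly handles the polynomial growth of $h$, provided you conclude via $1\geq E[(Z_T^\epsilon)^{-1}]\geq E[(Z_{T\wedge\tau_n}^\epsilon)^{-1}\mathbf 1_{\{\tau_n>T\}}]=Q_n(\tau_n>T)=P(\tau_n>T)\to1$ rather than via $E[(Z_{T\wedge\tau_n}^\epsilon)^{-1}]$, which equals $1$ trivially for each $n$.

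The one false step is the identity $E_{Q^\epsilon}[Z_T^\epsilon\mid\mathcal F_t\vee\mathcal Y_T^\epsilon]=Z_t^\epsilon$. If $b\equiv0$, then $Z_T^\epsilon$ is itself $\mathcal Y_T^\epsilon$-measurable, so the left side is $Z_T^\epsilon\neq Z_t^\epsilon$. Your independence argument only justifies conditioning on $\mathcal F_t\vee\sigma(X^\epsilon)$. Use that $\sigma$-field, or your fallback of conditioning on $\mathcal F_t$ with the $Q^\epsilon$-martingale property of $Z^\epsilon$, and the proof is complete.
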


\subsection{Expansion of the system}\label{section:expansion-of-system}
In order to consider the expansion of the quotient (\ref{eq-Kallianpur-Striebel-epsilon}), let us recall the differentiation of the state process $\{X_t^\epsilon\}_{t\geq 0}$ with respect to $\epsilon$. 

In what follows, we write
\begin{align*}
  &\partial_x^{j} a(x_1,\cdots,x_{d_1},0)[\xi^1\otimes \cdots \otimes \xi^j]
  =\sum_{i_1,\cdots,i_j=1}^{d_1}\frac{\partial}{\partial x_{i_1}} \cdots \frac{\partial}{\partial x_{i_j}} a(x_1, \cdots, x_{d_1},0)\xi_{i_1}^1\cdots \xi_{i_j}^j
\end{align*}
for $\xi^{i}=(\xi_1^i,\cdots,\xi_{d_1}^i)~(i=1,\cdots,j)$.

  \begin{proposition}\label{prop-expansion-of-SDE}
  For any fixed $t>0$ and $p \ge 1$, the mapping $\epsilon \to \{X_s^\epsilon\}_{0\leq s\leq t}$ is of class $C^\infty$ in the sense of $L^p(\Omega,C([0,t];\mathbb{R}^{d_1}))$, and the derivatives $X_t^{[k],\epsilon} := \partial_\epsilon^k X_t^\epsilon~(k\geq 1)$ satisfy the stochastic differential equation obtained by formal differentiation of \eqref{eq-eta}:
 \begin{align*}
  dX_t^{[k],\epsilon}
  ={}&\sum_{q=0}^k\binom{k}{q}\sum_{j=0}^{k-q}
    \sum_{(i_1,\ldots,i_j)\in\Lambda(k-q,j)}
    \nu(k-q,(i_1,\ldots,i_j))
    \partial_x^j\partial_\epsilon^q a(X_t^\epsilon,\epsilon)
    [X_t^{[i_1],\epsilon}\otimes\cdots\otimes X_t^{[i_j],\epsilon}]dt\\
  &+\sum_{q=0}^k\binom{k}{q}\sum_{j=0}^{k-q}
    \sum_{(i_1,\ldots,i_j)\in\Lambda(k-q,j)}
    \nu(k-q,(i_1,\ldots,i_j))
    \partial_x^j\partial_\epsilon^q b(X_t^\epsilon,\epsilon)
    [X_t^{[i_1],\epsilon}\otimes\cdots\otimes X_t^{[i_j],\epsilon}]dV_t,
\end{align*}
with $X_0^{[k],\epsilon}=0$. Here,
\begin{align*}
  \Lambda(k,j)=\left\{ (i_1,\cdots,i_j)\in \mathbb{N}^j;i_1\leq \cdots \leq i_j,~i_1+\cdots+i_j=k  \right\},
\end{align*}
and $\nu(k,(i_1,\cdots,i_j))$ represents the number of partitions of the set $\{1,\cdots,k\}$ into $j$ subsets with $(i_1,\cdots,i_j)$ elements. We adopt the conventions
\begin{align*}
  \Lambda(0,0)=\{()\},\qquad \nu(0,())=1,\qquad
  \Lambda(k,0)=\varnothing\quad(k\geq1),
\end{align*}
where $()$ denotes the empty tuple. Empty products are interpreted as $1$, and the zeroth derivative applied to an empty tensor product is interpreted as the function itself. 
\end{proposition}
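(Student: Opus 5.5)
We would prove Proposition~\ref{prop-expansion-of-SDE} by induction on $k$, combining the classical theory of differentiability of SDE solutions with respect to a parameter and the multivariate Fa\`a di Bruno formula. The key observation is that the pair $(X^\epsilon, X^{[1],\epsilon},\dots,X^{[k],\epsilon})$ solves a triangular system of SDEs. The equation for $X^{[k],\epsilon}$ is linear in $X^{[k],\epsilon}$ itself; this is the term with $q=0$, $j=1$, $(i_1)=(k)$. All remaining coefficients are polynomials in $X^{[1],\epsilon},\dots,X^{[k-1],\epsilon}$, multiplied by the bounded functions $\partial_x^j\partial_\epsilon^q a$ and $\partial_x^j\partial_\epsilon^q b$. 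By the uniform boundedness of all derivatives of $a$ and $b$, the augmented system therefore has a unique strong solution with moments of all orders. The $L^p(\Omega,C([0,t]))$ estimates for $\sup_s|X^{[i],\epsilon}_s|$ follow from Burkholder--Davis--Gundy and Gronwall, uniformly in $|\epsilon|<1$, by induction on $i$.

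First, we treat $k=1$. We form the difference quotient $\Delta_\delta=(X^{\epsilon+\delta}-X^\epsilon)/\delta$ and write its equation using the integral form of Taylor's theorem:
\begin{align*}
  a(X^{\epsilon+\delta},\epsilon+\delta)-a(X^\epsilon,\epsilon)=\int_0^1\Bigl(\partial_x a(\cdot)\,\delta\Delta_\delta+\partial_\epsilon a(\cdot)\,\delta\Bigr)d\lambda,
\end{align*}
where $(\cdot)$ denotes evaluation at the interpolated point, and similarly for $b$. We then subtract the candidate equation for $X^{[1],\epsilon}$. BDG and Gronwall give
\begin{align*}
  E\sup_{s\le t}|\Delta_\delta-X^{[1],\epsilon}|^p\to0.
\end{align*}
The required inputs are the uniform moment bounds on $\Delta_\delta$ and the continuity $X^{\epsilon+\delta}\to X^\epsilon$ in $L^p$, which follows by the same Gronwall argument. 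Continuity of $\epsilon\mapsto X^{[1],\epsilon}$ in $L^p$ is obtained analogously. For the inductive step, we view the augmented vector $\mathbf X^{(k-1)}=(X^\epsilon,\dots,X^{[k-1],\epsilon})$ as the solution of an SDE with smooth coefficients in $(\mathbf x,\epsilon)$. Applying the $k=1$ argument to this system shows that it is differentiable in $\epsilon$, with derivative solving the formally differentiated system. This identifies $\partial_\epsilon X^{[k-1],\epsilon}=X^{[k],\epsilon}$.

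Two obstacles arise here. The augmented coefficients are only of polynomial growth in the $X^{[i]}$ variables, not globally Lipschitz. We handle this by exploiting the triangular structure: the equation for $X^{[i]}$ is linear in $X^{[i]}$ with bounded coefficient, and the polynomial dependence on lower-order derivatives is controlled by the moment bounds already established. Hence the Gronwall argument goes through level by level, using H\"older's inequality to separate the products. This localisation and moment bookkeeping is the main technical step.

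Finally, we identify the coefficients. We compute $\partial_\epsilon^k$ of $a(X^\epsilon_t,\epsilon)$ by first applying Leibniz's rule to split $k$ derivatives into $q$ falling on the explicit $\epsilon$ slot and $k-q$ falling through $X^\epsilon$, which gives the factor $\binom{k}{q}$. The Fa\`a di Bruno formula for $\partial_\epsilon^{k-q}$ of a composition then yields a sum over set partitions of $\{1,\dots,k-q\}$ into $j$ blocks. Grouping partitions by their block sizes $(i_1\le\dots\le i_j)\in\Lambda(k-q,j)$ gives the multiplicity $\nu(k-q,(i_1,\dots,i_j))$, using the symmetry of $\partial_x^j$. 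The stated conventions handle $j=0$: the term survives only when $k-q=0$, giving $\partial_\epsilon^k a$ itself. The same computation applies to $b$, and $X_0^{[k],\epsilon}=0$ because $x_0$ does not depend on $\epsilon$.
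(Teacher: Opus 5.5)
The paper gives no proof of this proposition. It is only recalled, as the classical parameter-differentiability result for SDEs whose coefficients have bounded derivatives, combined with Fa\`a di Bruno bookkeeping. Your proposal correctly reconstructs that standard argument: a triangular augmented system in which each level is linear in its top variable with bounded coefficient, level-by-level difference quotients controlled by Burkholder--Davis--Gundy, Gronwall and H\"older, and the identification of $\binom{k}{q}$ and $\nu(k-q,\cdot)$ by splitting derivatives between the $\epsilon$-slot and the $x$-slot and grouping set partitions by block sizes.

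One point is worth making explicit. Since $X^\epsilon$ is differentiable only in $L^p$, the Fa\`a di Bruno step should be used as a deterministic algebraic identity: $\partial_\epsilon+y_1\cdot\partial_x+\sum_{i=1}^{k-1}y_{i+1}\cdot\partial_{y_i}$ maps the order-$(k-1)$ coefficient polynomial in $(x,y_1,\dots,y_{k-1},\epsilon)$ to the order-$k$ one. Together with uniqueness for the linear top-level equation, this identity is what identifies the $L^p$-limit of the difference quotients of $X^{[k-1],\epsilon}$ with $X^{[k],\epsilon}$.
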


Let us write $X_t^{[0]}=X_t^0$ and $X_t^{[k]}=X_t^{[k],0}$ for $k\geq1$. Then, $X^{[0]}$ is the solution of the deterministic equation 
\begin{align*}
  \frac{dX_t^{[0]}}{dt}=a(X_t^{[0]},0),~~X_0^{[0]}=x_0,
\end{align*}
and $X^{[1]}$ is a Gaussian process given by
\begin{align}
  \label{eq:X1}dX_t^{[1]}=\left\{\partial_x a(X_t^{[0]},0)X_t^{[1]}+\partial_\epsilon a(X_t^{[0]},0)\right\}dt+\partial_\epsilon b(X_t^{[0]},0)dV_t.
\end{align}
In general, $X_t^{[p]}~(p\geq 2)$ is the solution of
\begin{align}
  \begin{split}\label{eq-Xp}
    dX_t^{[p]}=&\partial_x a(X_t^{[0]},0)[X_t^{[p]}]dt\\
  &+\sum_{j=2}^p \sum_{(i_1,\cdots,i_j)\in \Lambda(p,j)}\nu(p,(i_1,\cdots,i_j))\partial_x^{j} a(X_t^{[0]},0)[X_t^{[i_1]}\otimes \cdots \otimes X_t^{[i_j]}]dt\\
  &+\sum_{q=1}^p\binom pq\sum_{j=0}^{p-q}\sum_{(i_1,\ldots,i_j)\in\Lambda(p-q,j)}
    \nu(p-q,(i_1,\ldots,i_j))
    \partial_x^j\partial_\epsilon^q a(X_t^{[0]},0)
    [X_t^{[i_1]}\otimes\cdots\otimes X_t^{[i_j]}]dt\\
  &+\sum_{q=2}^p\binom pq\sum_{j=0}^{p-q}\sum_{(i_1,\ldots,i_j)\in\Lambda(p-q,j)}
    \nu(p-q,(i_1,\ldots,i_j))
    \partial_x^j\partial_\epsilon^q b(X_t^{[0]},0)
    [X_t^{[i_1]}\otimes\cdots\otimes X_t^{[i_j]}]dV_t\\
  &+p\sum_{j=0}^{p-1} \sum_{(i_1,\cdots,i_j)\in \Lambda(p-1,j)}\nu(p-1,(i_1,\cdots,i_j))\partial_x^{j}\partial_\epsilon b(X_t^{[0]},0)[X_t^{[i_1]}\otimes \cdots \otimes X_t^{[i_j]}]dV_t,
  \end{split}  
\end{align}
and thus it can be written as 
\begin{align}
  \label{eq:Xp}
  \begin{split}
    &X_t^{[p]}\\
    =&\sum_{j=2}^p \sum_{(i_1,\cdots,i_j)\in \Lambda(p,j)}\nu(p,(i_1,\cdots,i_j))\int_0^t \Phi(s,t)\partial_x^{j} a(X_s^{[0]},0)[X_s^{[i_1]}\otimes \cdots \otimes X_s^{[i_j]}]ds\\
  &+\sum_{q=1}^p\binom pq\sum_{j=0}^{p-q}\sum_{(i_1,\ldots,i_j)\in\Lambda(p-q,j)}
    \nu(p-q,(i_1,\ldots,i_j))
    \int_0^t\Phi(s,t)\partial_x^j\partial_\epsilon^q a(X_s^{[0]},0)
    [X_s^{[i_1]}\otimes\cdots\otimes X_s^{[i_j]}]ds\\
  &+\sum_{q=2}^p\binom pq\sum_{j=0}^{p-q}\sum_{(i_1,\ldots,i_j)\in\Lambda(p-q,j)}
    \nu(p-q,(i_1,\ldots,i_j))
    \int_0^t\Phi(s,t)\partial_x^j\partial_\epsilon^q b(X_s^{[0]},0)
    [X_s^{[i_1]}\otimes\cdots\otimes X_s^{[i_j]}]dV_s\\
  &+p\sum_{j=0}^{p-1} \sum_{(i_1,\cdots,i_j)\in \Lambda(p-1,j)}\nu(p-1,(i_1,\cdots,i_j))\int_0^t \Phi(s,t)\partial_x^{j}\partial_\epsilon b(X_s^{[0]},0)[X_s^{[i_1]}\otimes \cdots \otimes X_s^{[i_j]}]dV_s,
  \end{split}  
\end{align}
where $\Phi(s,t)$ is the solution of the $d_1\times d_1$-matrix equation 
\begin{align*}
  \frac{\partial}{\partial t}\Phi(s,t)
  =\partial_x a(X_t^{[0]},0)\Phi(s,t),~~\Phi(s,s)=I_{d_1}.
\end{align*}
In what follows, we write
\begin{align*}
  \partial_x^j\partial_\epsilon^q \phi(X_t^{[0]},0)= \phi_t^{[j,q]},\qquad \partial^k f(X_t^{[0]})=f_t^{[k]}\quad(k\geq0)
\end{align*}
for $\phi=a,b,h,\beta$, where $\beta=b b^\top$.

\subsection{Expansion of the conditional expectation}
To describe the expansion of the conditional expectation (\ref{eq-conditional-expectation}), let us write 
\begin{align*}
  F_t^{[k]}:=\left.\frac{1}{k!}\frac{\partial^k}{\partial \epsilon^k}f(X_t^\epsilon)\right|_{\epsilon=0},~~H_t^{[k]}:=\left.\frac{1}{k!}\frac{\partial^k}{\partial \epsilon^k}h(X_t^\epsilon,\epsilon)\right|_{\epsilon=0}~~(k\geq 0).
\end{align*}
Note that $f_t^{[0]}$ and $h_t^{[0,0]}$ are deterministic processes, and for $k\geq 1$, we have the expression
\begin{align}
  \label{eq:Fk}&F_t^{[k]}=\frac{1}{k!}\sum_{j=1}^k\sum_{(i_1,\cdots,i_j)\in \Lambda(k,j)}\nu(k,(i_1,\cdots,i_j))f_t^{[j]}[X_t^{[i_1]}\otimes \cdots \otimes X_t^{[i_j]}],\\
  \begin{split}\label{eq:Hk}
  &H_t^{[k]}={}\frac1{k!}\sum_{q=0}^k\binom{k}{q}\sum_{j=0}^{k-q}
    \sum_{(i_1,\ldots,i_j)\in\Lambda(k-q,j)}\nu(k-q,(i_1,\ldots,i_j))
    h_t^{[j,q]}
    [X_t^{[i_1]}\otimes\cdots\otimes X_t^{[i_j]}].
  \end{split}
\end{align}
Fix $\lambda\in[0,1)$. For $0<\epsilon<1$, define
\begin{align*}
  &\mathsf H_t^{\lambda,\epsilon}=h_t^{[0,0]}+\frac{\lambda}{\epsilon}\{h(X_t^{\epsilon},\epsilon)-h_t^{[0,0]}\},\\
  &d\mathsf{Y}_t^{\lambda,\epsilon}=\mathsf H_t^{\lambda,\epsilon}dt+\sigma(t)dW_t,\qquad \mathsf Y_0^{\lambda,\epsilon}=0.
\end{align*}
At $\epsilon=0$, set $\mathsf H_t^{\lambda,0}=H_t^{[0]}+\lambda H_t^{[1]}$ and define $\mathsf Y^{\lambda,0}$ by the same observation equation. In particular, we have
\begin{align*}
  \mathsf H_t^{\epsilon,\epsilon}=h(X_t^\epsilon,\epsilon),~~\mathsf{Y}_t^{\epsilon,\epsilon}=Y_t^\epsilon.
\end{align*}
Moreover, we have the expansion
\begin{align*}
  \mathsf H_t^{\lambda,\epsilon}=H_t^{[0]}+\lambda \left( H_t^{[1]}+\sum_{i=2}^n H_t^{[i]}\epsilon^{i-1} \right)+r_t^{\lambda,\epsilon,n+1},
\end{align*}
where
\begin{align*}
  r_t^{\lambda,\epsilon,n+1}=\frac{\lambda\epsilon^n}{n!}\int_0^1(1-u)^n
  \left.\frac{\partial^{n+1}}{\partial\theta^{n+1}}h(X_t^\theta,\theta)\right|_{\theta=\epsilon u}du.
\end{align*}

Let $R(t)=\sigma(t)\sigma(t)^\top$, and let $\{\mathscr Y_t^{\lambda,\epsilon}\}_{t\geq0}$ be the filtration generated by $\mathsf Y^{\lambda,\epsilon}$ augmented with null sets. Define
\begin{align}
  &\mathsf Z_t^{\lambda,\epsilon}
  =\exp\left(\int_0^t(\mathsf H_s^{\lambda,\epsilon})^\top R(s)^{-1}\,d\mathsf Y_s^{\lambda,\epsilon}
  -\frac12\int_0^t(\mathsf H_s^{\lambda,\epsilon})^\top R(s)^{-1}\mathsf H_s^{\lambda,\epsilon}\,ds\right),\nonumber\\
  &\mathsf Q^{\lambda,\epsilon}(A)=E\left[1_A(\mathsf Z_T^{\lambda,\epsilon})^{-1}\right],\qquad A\in\mathcal F,\nonumber\\
  &\tilde H_t^\lambda=H_t^{[0]}+\lambda H_t^{[1]},\label{eq:def-H-tilde}
\end{align}
and write
\begin{align*}
  \tilde{Z}_t^{\lambda,\epsilon}=&\exp\left( \int_0^t (\tilde{H}_s^\lambda)^\top R(s)^{-1} \, d\mathsf{Y}_s^{\lambda,\epsilon}  - \frac{1}{2} \int_0^t (\tilde{H}_s^\lambda)^\top R(s)^{-1} \tilde{H}_s^\lambda \, ds \right).
\end{align*}
For tensor-valued random variables $U$ with $E_{\mathsf Q^{\lambda,\epsilon}}[|U|\tilde{Z}_t^{\lambda,\epsilon}]<\infty$, define
\begin{align}
  \tilde{\mathsf E}_t^{\lambda,\epsilon}[U]=\frac{E_{\mathsf Q^{\lambda,\epsilon}}[U\tilde{Z}_t^{\lambda,\epsilon}|\mathscr{Y}_t^{\lambda,\epsilon}]}{E_{\mathsf Q^{\lambda,\epsilon}}[\tilde{Z}_t^{\lambda,\epsilon}|\mathscr{Y}_t^{\lambda,\epsilon}]}.\label{eq:def-E-tilde}
\end{align}
Then, we have the following result.
\begin{proposition}
  For $0\leq t\leq T$, let 
  \begin{align*}
    \Phi:C([0,t];\mathbb{R}^{m_1})\times C([0,t];\mathbb{R}^{d_2})\to \mathbb{R}^k
  \end{align*}
  be a measurable map, and define
  \begin{align*}
    \bar{\mathsf Y}_t^{\lambda,\epsilon}=\int_0^t \tilde{H}_s^\lambda\,ds+\int_0^t \sigma(s)dW_s.
  \end{align*}  
  Then, for 
  \begin{align*}
    &U=\Phi(\{V_s\}_{0\leq s\leq t},\{\mathsf Y_s^{\lambda,\epsilon}\}_{0\leq s\leq t}),\\
    &\bar{U}=\Phi(\{V_s\}_{0\leq s\leq t},\{\bar{\mathsf Y}_s^{\lambda,\epsilon}\}_{0\leq s\leq t})
  \end{align*}
  we have
  \begin{align*}
   E_{\mathsf Q^{\lambda,\epsilon}}[|U|\tilde{Z}_t^{\lambda,\epsilon}]=E[|\bar{U}|].
  \end{align*}
  In particular, $E_{\mathsf Q^{\lambda,\epsilon}}[|U|\tilde{Z}_t^{\lambda,\epsilon}]<\infty$ if $E[|\bar{U}|]<\infty$, and thus $\tilde{\mathsf E}_t^{\lambda,\epsilon}[U]$ is well-defined.
\end{proposition}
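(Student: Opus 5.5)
The plan is a Girsanov change of measure that turns $\mathsf Y^{\lambda,\epsilon}$ into $\bar{\mathsf Y}^{\lambda,\epsilon}$ while leaving the law of $V$ unchanged. Combining $\mathsf Q^{\lambda,\epsilon}$ with the density $\tilde Z_t^{\lambda,\epsilon}$ should give exactly such a measure.

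First I describe the structure under $\mathsf Q^{\lambda,\epsilon}$. Since $h$ has polynomial growth and $X^\epsilon$ has moments of all orders, $\mathsf H^{\lambda,\epsilon}$ satisfies the same polynomial bounds. Hence $(\mathsf Z_t^{\lambda,\epsilon})^{-1}$ is a true $P$-martingale on $[0,T]$; this is the standard step used to justify the Kallianpur--Striebel formula. One can check it through a Novikov-type condition on small subintervals, or by the usual localisation argument for linear-growth-type drifts. By Girsanov's theorem, under $\mathsf Q^{\lambda,\epsilon}$ the process
\begin{align*}
B_t=\int_0^t \sigma(s)^\top R(s)^{-1} d\mathsf Y_s^{\lambda,\epsilon}
\end{align*}
together with $V$ has the required law. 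More precisely, $V$ is a Brownian motion independent of $\mathsf Y^{\lambda,\epsilon}$, and $\mathsf Y^{\lambda,\epsilon}=\int_0^\cdot\sigma(s)\,dW_s'$ for a $\mathsf Q^{\lambda,\epsilon}$-Brownian motion $W'$. This is the same statement as in the classical reference-measure construction. Thus under $\mathsf Q^{\lambda,\epsilon}$ the pair $(V,\mathsf Y^{\lambda,\epsilon})$ has the law of $(V,\int_0^\cdot\sigma\,dW)$ under $P$.

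Second, I use $\tilde Z^{\lambda,\epsilon}$. The process $\tilde H_s^\lambda=H_s^{[0]}+\lambda H_s^{[1]}$ is an affine function of the Gaussian process $X_s^{[1]}$, which is a functional of $V$ alone. Under $\mathsf Q^{\lambda,\epsilon}$ it is therefore independent of $\mathsf Y^{\lambda,\epsilon}$. So $\tilde Z_t^{\lambda,\epsilon}$ is the stochastic exponential of $\int\tilde H^\top R^{-1}d\mathsf Y$ with a drift that grows linearly in a Gaussian process. Conditioning on $V$, it is the exponential of a Gaussian martingale with deterministic integrand, so $E_{\mathsf Q}[\tilde Z_t^{\lambda,\epsilon}\mid V]=1$. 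In particular $\tilde Z_t^{\lambda,\epsilon}$ is a true martingale with mean one.

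Define $\tilde{\mathsf Q}$ by $d\tilde{\mathsf Q}=\tilde Z_t^{\lambda,\epsilon}\,d\mathsf Q^{\lambda,\epsilon}$ on $\mathcal F_t$. Apply Girsanov conditionally on $V$, or jointly using the independence. Under $\tilde{\mathsf Q}$, $V$ remains a Brownian motion with the same law, because the density has conditional mean one given $V$. Moreover $\mathsf Y^{\lambda,\epsilon}_s-\int_0^s\tilde H_u^\lambda du$ becomes $\int_0^s\sigma\,d\hat W$ for a Brownian motion $\hat W$ independent of $V$. Hence the law of $(V,\mathsf Y^{\lambda,\epsilon})$ under $\tilde{\mathsf Q}$ equals the law of $(V,\bar{\mathsf Y}^{\lambda,\epsilon})$ under $P$. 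Here $\tilde H^\lambda$ is built from the same $V$, so the identification is the law of $(V,\int\tilde H(V)\,ds+\int\sigma\,dW)$.

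Finally, apply this equality of laws to the measurable functional $|\Phi|$:
\begin{align*}
E_{\mathsf Q^{\lambda,\epsilon}}[|U|\tilde Z_t^{\lambda,\epsilon}]=E_{\tilde{\mathsf Q}}[|\Phi(V,\mathsf Y^{\lambda,\epsilon})|]=E[|\Phi(V,\bar{\mathsf Y}^{\lambda,\epsilon})|]=E[|\bar U|].
\end{align*}
The monotone class theorem handles the nonnegative measurable case, so no integrability is needed beforehand. The main obstacle is verifying that $(\mathsf Z_T^{\lambda,\epsilon})^{-1}$ is a true martingale, given that $h$ only has polynomial growth. I would cite or repeat the argument used for the Kallianpur--Striebel formula, namely localisation plus uniform moment bounds on $X^\epsilon$, adapted to $\mathsf H^{\lambda,\epsilon}$.
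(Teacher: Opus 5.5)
Your proposal is correct and is essentially the paper's argument. The paper only invokes Girsanov's theorem and defers the details to the cited reference. Your two-step change of measure is exactly that argument: relative to $P$ its composite density on $\mathcal F_t$ is $\tilde Z_t^{\lambda,\epsilon}(\mathsf Z_t^{\lambda,\epsilon})^{-1}$, and it replaces the drift $\mathsf H^{\lambda,\epsilon}$ of $\mathsf Y^{\lambda,\epsilon}$ by $\tilde H^\lambda$ while keeping $V$ a Brownian motion. The step you call the main obstacle is easier than you suggest, and the paper already assumes it when it takes $\mathsf Q^{\lambda,\epsilon}$ to be a probability measure. Since $\mathsf H^{\lambda,\epsilon}$ is a functional of $V$ alone, it is independent of $W$ under $P$, so the conditioning-on-$V$ argument you already use for $\tilde Z^{\lambda,\epsilon}$ gives $E[(\mathsf Z_T^{\lambda,\epsilon})^{-1}\mid V]=1$ directly. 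By contrast, Novikov's criterion on short subintervals would generally fail for superlinear $h$.
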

This result easily follows from Girsanov's theorem. See the argument following Proposition 4.5 of \citet{kurisaki2026ks} for details. In the sequel, we often use this result without explicit mention.

For later reference, we define the functions
  \begin{align}
    &\mathbb{H}_t^{(n),\lambda}(z)=\lambda \sum_{i=2}^{n+1} H_t^{[i]}z^{i-1} ,\label{eq:def-H-bb}\\
    \begin{split}
      &\mathbb{J}_t^{(n),\lambda,\epsilon}(z)=1+\sum_{r=1}^n\int_0^t\int_0^{t_r}\cdots\int_0^{t_2}
      \mathbb{H}_{t_1}^{(n),\lambda}(z)^\top R({t_1})^{-1}
      \{d\mathsf Y_{t_1}^{\lambda,\epsilon}-\tilde{H}_{t_1}^\lambda\,dt_1\}\\
    &\qquad\qquad\times\cdots\times
      \mathbb{H}_{t_r}^{(n),\lambda}(z)^\top R({t_r})^{-1}
      \{d\mathsf Y_{t_r}^{\lambda,\epsilon}-\tilde{H}_{t_r}^\lambda\,dt_r\}      
    \end{split}
    \label{eq:def-J-bb},
  \end{align}
  for $z \in \mathbb{R}$.

Then, Theorem 4.10 of \citet{kurisaki2026ks} yields the following result.
\begin{theorem}\label{theorem:filter-expansion}
  We have
  \begin{align}
    \begin{split}
      &E[f(X_t^\epsilon)\mid\mathscr Y_t^{\lambda,\epsilon}]\\
    =&\Biggl(\sum_{k=0}^{n}\tilde {\mathsf E}_t^{\lambda,\epsilon}
      [F_t^{[k]}\mathbb{J}_t^{(n),\lambda,\epsilon}(\epsilon)]\epsilon^k\Biggr)\Biggl\{1+\sum_{j=1}^{n}(-1)^j
      \Biggl(\tilde {\mathsf E}_t^{\lambda,\epsilon}
      [\mathbb{J}_t^{(n),\lambda,\epsilon}(\epsilon)]-1\Biggr)^j\Biggr\}
      +O_P^T(\epsilon^{n+1}),
    \end{split}\label{eq:expansion}    
  \end{align}
  for every $n\in\mathbb N$ and $0\leq t\leq T$, as $\epsilon\to0$ with $\lambda$ fixed. Here, $\eta^{\lambda,\epsilon}=O_P^T(\epsilon^k)$ means that, for some $\epsilon_0\in(0,1)$,
  \begin{align*}
    \lim_{K\to\infty}\sup_{0\leq t\leq T}\sup_{0<\epsilon<\epsilon_0}
    P(|\eta_t^{\lambda,\epsilon}|>K\epsilon^k)=0.
  \end{align*}
  The remainder estimate is stated for each fixed $\lambda$.
\end{theorem}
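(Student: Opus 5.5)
The statement is Theorem~\ref{theorem:filter-expansion}. The paper obtains it directly from Theorem 4.10 of \citet{kurisaki2026ks}, so the plan is to reproduce the structure of that argument in the present notation.

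First, rewrite the conditional expectation through the Kallianpur--Striebel formula \eqref{eq-Kallianpur-Striebel-epsilon} with the observation $\mathsf Y^{\lambda,\epsilon}$ and the measure $\mathsf Q^{\lambda,\epsilon}$. Then factor the likelihood as $\mathsf Z_t^{\lambda,\epsilon}=\tilde Z_t^{\lambda,\epsilon}\,\mathsf J_t^{\lambda,\epsilon}$. Here
\begin{align*}
\mathsf J_t^{\lambda,\epsilon}=\exp\Bigl(\int_0^t(\mathsf H_s^{\lambda,\epsilon}-\tilde H_s^\lambda)^\top R(s)^{-1}\{d\mathsf Y_s^{\lambda,\epsilon}-\tilde H_s^\lambda ds\}-\tfrac12\int_0^t|\ldots|^2_{R^{-1}}ds\Bigr).
\end{align*}
Dividing numerator and denominator by $E_{\mathsf Q^{\lambda,\epsilon}}[\tilde Z_t^{\lambda,\epsilon}|\mathscr Y_t^{\lambda,\epsilon}]$ gives
\begin{align*}
E[f(X_t^\epsilon)|\mathscr Y_t^{\lambda,\epsilon}]=\tilde{\mathsf E}_t^{\lambda,\epsilon}[f(X_t^\epsilon)\mathsf J_t^{\lambda,\epsilon}]\,/\,\tilde{\mathsf E}_t^{\lambda,\epsilon}[\mathsf J_t^{\lambda,\epsilon}].
\end{align*}
The point is that under the reweighted measure the observation has the Gaussian-reference drift $\tilde H^\lambda$. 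By the Girsanov proposition, $\tilde{\mathsf E}$-moments are computed as ordinary expectations over $(V,\bar{\mathsf Y}^{\lambda,\epsilon})$, which are uniform in $\epsilon$.

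Next, expand both factors in $\epsilon$:
\begin{itemize}
\item Since $\mathsf H^{\lambda,\epsilon}-\tilde H^\lambda=\mathbb H^{(n),\lambda}_t(\epsilon)+r_t^{\lambda,\epsilon,n+1}$ with $\mathbb H^{(n),\lambda}=O(\epsilon)$, the stochastic exponential $\mathsf J$ is the solution of $d\mathsf J=\mathsf J(\mathsf H-\tilde H)^\top R^{-1}(d\mathsf Y-\tilde H dt)$. Picard iteration writes it as the iterated-integral series, whose truncation after $n$ terms, with $\mathbb H$ in place of $\mathsf H-\tilde H$, is $\mathbb J_t^{(n),\lambda,\epsilon}(\epsilon)$. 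The $r$-fold term is $O(\epsilon^r)$, so truncating at $r=n$ costs $O(\epsilon^{n+1})$. Replacing $r^{\lambda,\epsilon,n+1}$ also costs $O(\epsilon^{n+1})$. Both bounds are in $L^p$ under $\tilde{\mathsf E}$, via BDG and the polynomial growth \eqref{eq-assumption-h} combined with moment bounds on $X^\theta$ uniform in $\theta$ (Proposition~\ref{prop-expansion-of-SDE}).
\item Taylor-expand $f(X_t^\epsilon)=\sum_{k\le n}F_t^{[k]}\epsilon^k+O(\epsilon^{n+1})$ in $L^p$.
\end{itemize}
Multiplying out and discarding products of order $\epsilon^{n+1}$ by Hölder's inequality gives the numerator. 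Any cross terms of order $\le n$ are kept, because $\mathbb J(\epsilon)$ is not truncated in powers of $\epsilon$, and this matches \eqref{eq:expansion}.

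For the denominator, $D=\tilde{\mathsf E}[\mathbb J(\epsilon)]=1+O_P(\epsilon)$. Expanding $1/D$ as a geometric series up to order $n$ leaves an error $(D-1)^{n+1}/D$. This is where the \textbf{main obstacle} lies: $D$ is random and could be small, so the remainder is only controlled in probability. It is handled by showing $P(D<1/2)\to0$ uniformly in $t\le T$, which follows from Chebyshev's inequality and $E|D-1|^2=O(\epsilon^2)$. The same issue arises when converting the $L^p$ bounds on the remainders inside the conditional expectations, which are $L^p$ bounds on the conditional expectations by Jensen's inequality, into $O_P^T$ statements for the quotient. The uniformity in $t\in[0,T]$ follows because all the moment bounds are uniform on $[0,T]$.
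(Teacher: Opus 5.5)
The paper does not reprove this statement. It only checks that the present setting is an instance of Theorem 4.10 of \citet{kurisaki2026ks}, with perturbation parameter $\epsilon$, reference drift $\tilde H^\lambda$, coefficients $\lambda H^{[r+1]}$, and $U_t^\epsilon=f(X_t^\epsilon)$. Your algebraic skeleton (Kallianpur--Striebel, $\mathsf Z_t=\tilde Z_t\mathsf J_t$, iterated-integral expansion of $\mathsf J$, geometric series for the denominator) is correct. However, the analytic step that carries the theorem is only asserted, and as written it fails.

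Work under $\tilde P$, defined by $d\tilde P=\tilde Z_T^{\lambda,\epsilon}\,d\mathsf Q^{\lambda,\epsilon}$. Then $\tilde{\mathsf E}_t^{\lambda,\epsilon}=E_{\tilde P}[\,\cdot\mid\mathscr Y_t^{\lambda,\epsilon}]$ and $\mathsf J_T=dP/d\tilde P$. Replacing $\mathsf H-\tilde H$ by $\mathbb H^{(n),\lambda}(\epsilon)$ inside the first $n$ iterated integrals is fine in every $L^p$. The problem is the Picard tail $\mathsf J_t-\sum_{r\le n}I_r(t)$, which contains $\mathsf J$ itself, and $\mathsf J$ is in general only in $L^1(\tilde P)$.

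To see this, note that under $\tilde P$ the innovation is a Brownian motion independent of $V$. So conditionally on $V$, $\log\mathsf J_T$ is Gaussian with variance $\langle M\rangle_T=\int_0^T(\mathsf H_s-\tilde H_s)^\top R(s)^{-1}(\mathsf H_s-\tilde H_s)\,ds$. Hence $E_{\tilde P}[\mathsf J_T^p]=E[\exp(\tfrac{p(p-1)}{2}\langle M\rangle_T)]$. Take $d_1=d_2=1$, $x_0=0$, $a=0$, $b=\epsilon$, $h(x)=x^3$, and constant $\sigma$; this satisfies \eqref{eq-assumption-h}. Then $\langle M\rangle_T=\lambda^2\epsilon^4\sigma^{-2}\int_0^T V_s^6\,ds$, and the expectation is $+\infty$ for every $p>1$. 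So neither BDG in $L^p$ nor the H\"older step ``discarding products of order $\epsilon^{n+1}$'' applies to any term containing $\mathsf J$.

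You need an additional argument. One option: conditionally on $V$, the $I_r$ are orthogonal with $E_{\tilde P}[I_r^2\mid V]=\langle M\rangle_t^r/r!$. Therefore $E_{\tilde P}[\,|\mathsf J_t-\sum_{r\le n}I_r(t)|^2\mid V]\le\langle M\rangle_t^{n+1}e^{\langle M\rangle_t}/(n+1)!$. Use this bound on $\{\langle M\rangle_t\le1\}$. The complement has probability $O(\epsilon^N)$ for every $N$, and you can bound it via $E_{\tilde P}[G\mathsf J_t]=E[G]$ for $\mathcal F_t$-measurable $G\geq0$. That same identity, not H\"older, is what controls $(f(X_t^\epsilon)-\sum_{k\le n}F_t^{[k]}\epsilon^k)\mathsf J_t$.

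Second, all your Jensen and Chebyshev bounds (for instance $E|D-1|^2=O(\epsilon^2)$) hold under $\tilde P$, whereas $O_P^T$ is a statement under $P$. The Girsanov proposition identifies $E_{\mathsf Q^{\lambda,\epsilon}}[|U|\tilde Z_t]$, not $\tilde{\mathsf E}_t^{\lambda,\epsilon}[U]$, with an ordinary expectation. You therefore need the transfer $P(A)\le K\tilde P(A)+P(\mathsf J_T>K)$. You also need tightness, uniformly in $\epsilon$, of $\log\mathsf J_T=\int_0^T(\mathsf H_s-\tilde H_s)^\top R(s)^{-1}\sigma(s)\,dW_s+\tfrac12\langle M\rangle_T$ under $P$.

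These change-of-measure and integrability estimates are exactly what the cited theorem supplies (compare the remark following the statement). The random denominator, which you single out as the main obstacle, is routine once they are in place. A minor slip: with the paper's definition, $r_t^{\lambda,\epsilon,n+1}$ is $O(\epsilon^n)$. The remainder after $\mathbb H_t^{(n),\lambda}(\epsilon)$ is $r_t^{\lambda,\epsilon,n+1}-\lambda\epsilon^nH_t^{[n+1]}$.
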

\begin{proof}
  Apply Theorem 4.10 of \citet{kurisaki2026ks} with perturbation parameter $\epsilon$, reference observation drift $\tilde H^{\lambda}$, observation coefficients $\lambda H^{[r+1]}$ at order $r\geq1$, and $U_t^\epsilon=f(X_t^\epsilon)$.
\end{proof}
\begin{remark}
  In what follows, we may set $\lambda=\epsilon$ when applying this theorem or Theorem 4.10 of \citet{kurisaki2026ks}, without changing the stated remainder orders. Strictly speaking, this falls outside the original formulation since the reference observation drift $\tilde H^{\epsilon}$ and the observation coefficients $\epsilon H^{[r+1]}$ depend on $\epsilon$. 
  
  However, the bounds required in Assumptions {[A1]}--{[A4]} of the reference hold uniformly for $\epsilon\in[0,1)$ in our setting. The change-of-measure and inverse-denominator estimates used in the proof also remain uniform, so the same argument yields the stated remainder estimates.
\end{remark}

Collecting the explicit powers of $\epsilon$ in \eqref{eq:expansion}, define, for $k\geq0$,
\begin{align}
  \label{eq:moment-expansion-coefficients}
  \begin{split}
    \mathsf M_t^{[k],\lambda,\epsilon}(f)
    &:=\sum_{j=0}^k(-1)^j
      \sum_{\substack{p,q_0\geq0,\ q_1,\ldots,q_j\geq1\\
        p+q_0+\cdots+q_j=k}}
      \tilde{\mathsf E}_t^{\lambda,\epsilon}
        [F_t^{[p]}\mathsf I_t^{[q_0],\lambda,\epsilon}]\prod_{\ell=1}^j
      \tilde{\mathsf E}_t^{\lambda,\epsilon}
        [\mathsf I_t^{[q_\ell],\lambda,\epsilon}],
  \end{split}
\end{align}
where for $r\geq1$ and $k_1,\ldots,k_r\geq2$,
  \begin{align}
    \label{def-J-i}
    \begin{split}
    \mathsf J_t^{(k_1,\ldots,k_r),\lambda,\epsilon}&=\int_0^t\int_0^{t_r}\cdots\int_0^{t_2}
      (H_{t_1}^{[k_1]})^\top R({t_1})^{-1}
      \{d\mathsf Y_{t_1}^{\lambda,\epsilon}-\tilde{H}_{t_1}^\lambda\,dt_1\}\\
    &\qquad\qquad\times\cdots\times
      (H_{t_r}^{[k_r]})^\top R({t_r})^{-1}
      \{d\mathsf Y_{t_r}^{\lambda,\epsilon}-\tilde{H}_{t_r}^\lambda\,dt_r\}.
    \end{split}
  \end{align}
  and
  \begin{align}
    \label{def-I-coefficients}
    \mathsf I_t^{[0],\lambda,\epsilon}=1,\quad 
    \mathsf I_t^{[k],\lambda,\epsilon}
    =\sum_{r=1}^k\lambda^r
      \sum_{\substack{k_1,\ldots,k_r\geq2\\k_1+\cdots+k_r=k+r}}
      \mathsf J_t^{(k_1,\ldots,k_r),\lambda,\epsilon}~~(k\geq 1).
  \end{align}

For $j=0$, the indices $q_1,\ldots,q_j$ are absent and the product is understood as $1$. Then \eqref{eq:expansion} can be written as
\begin{align}
  \label{eq:collected-moment-expansion}
  E[f(X_t^\epsilon)\mid\mathscr Y_t^{\lambda,\epsilon}]
  =\sum_{k=0}^n\mathsf M_t^{[k],\lambda,\epsilon}(f)\epsilon^k
    +O_P^T(\epsilon^{n+1}),
\end{align}
as $\epsilon\to0$ with $\lambda$ fixed.

\section{Cumulant coefficients and generating function}
\label{section:cumulant-coefficients}
\subsection{Conditional cumulant}\label{section:conditional-cumulant}
For $r\geq1$, let $\mathcal P_r$ denote the set of partitions of $\{1,\ldots,r\}$. Define the tensor ${\rm cum}_r(X_t^\epsilon\mid\mathscr Y_t^{\lambda,\epsilon})$ componentwise, for $i_1,\ldots,i_r\in\{1,\ldots,d_1\}$, by
\begin{align}
  \bigl({\rm cum}_{r}(X_t^\epsilon\mid\mathscr{Y}_t^{\lambda,\epsilon})\bigr)_{i_1\cdots i_r}
  =\sum_{\pi\in\mathcal P_r}(-1)^{|\pi|-1}(|\pi|-1)!
    \prod_{B\in\pi}E\!\left[\prod_{a\in B}X_{t,i_a}^\epsilon
      \,\middle|\,\mathscr Y_t^{\lambda,\epsilon}\right],\label{eq:conditional-cumulant}
\end{align}
where $X_{t,i}^\epsilon$ is the $i$-th element of $X_t^\epsilon$. This is the conditional $r$-th cumulant of $X_t^\epsilon$. 

Consider its asymptotic expansion with respect to $\epsilon$ by applying Theorem \ref{theorem:filter-expansion} to each moment in the expression. For each $B\subset \{1,\cdots,r\}$, set $f_B(x)=\prod_{a\in B}x_{i_a}$. Since \eqref{eq:collected-moment-expansion} gives
\begin{align*}
  E\!\left[\prod_{a\in B}X_{t,i_a}^\epsilon \,\middle|\,\mathscr Y_t^{\lambda,\epsilon}\right]
  =\sum_{q=0}^n\mathsf M_t^{[q],\lambda,\epsilon}(f_B)\epsilon^q
    +O_P^T(\epsilon^{n+1}),
\end{align*}
we define $\bigl(\mathsf A_{r,t}^{[k],\lambda,\epsilon}\bigr)_{i_1\cdots i_r}$, for $0\leq k\leq n$, as the coefficient of $z^k$ in the polynomial
\begin{align}
  \label{eq:cumulant-coefficient-polynomial}
  \sum_{\pi\in\mathcal P_r}(-1)^{|\pi|-1}(|\pi|-1)!
    \prod_{\ell=1}^{|\pi|}
    \left(\sum_{q=0}^n\mathsf M_t^{[q],\lambda,\epsilon}(f_{\pi_\ell})z^q\right).
\end{align}
Here, $z$ is a formal variable, $|\pi|$ denotes the number of blocks of $\pi$, and $\pi_1,\ldots,\pi_{|\pi|}$ is any enumeration of these blocks. The expression is independent of the choice of enumeration, and its coefficient of $z^k$ is independent of $n$ whenever $n\geq k$. Then we have
\begin{align*}
  {\rm cum}_{r}(X_t^\epsilon|\mathscr{Y}_t^{\lambda,\epsilon})=\sum_{k=0}^n \mathsf A_{r,t}^{[k],\lambda,\epsilon}\epsilon^k+O_P^T(\epsilon^{n+1})
\end{align*}
(see the remark following Definition 4.6 in \citet{kurisaki2026ks}). In particular, if we write
\begin{align*}
  &A_{r,t}^{[k],\epsilon}=\mathsf A_{r,t}^{[k],\epsilon,\epsilon},\qquad I_t^{[k],\epsilon}:=\mathsf I_t^{[k],\epsilon,\epsilon},\qquad J_t^{(k_1,\ldots,k_r),\epsilon}:=\mathsf J_t^{(k_1,\ldots,k_r),\epsilon,\epsilon},\qquad\tilde E_t^\epsilon=\tilde{\mathsf E}_t^{\epsilon,\epsilon},
\end{align*}
then we can write
\begin{align}
  {\rm cum}_r(X_t^{\epsilon}|\mathcal{Y}_t^\epsilon)=\sum_{k=0}^n A_{r,t}^{[k],\epsilon}\epsilon^k+O_P^T(\epsilon^{n+1}).\label{eq:expansion-cumulant}
\end{align}

In the case of $r=1$, this is the expansion of the conditional expectation $E[X_t^\epsilon|\mathcal{Y}_t^\epsilon]$, and the coefficients up to the third order are
\begin{align*}
  A_{1,t}^{[0],\epsilon}&=X_t^{[0]},\qquad
  A_{1,t}^{[1],\epsilon}=\tilde E_t^\epsilon[X_t^{[1]}],\\
  A_{1,t}^{[2],\epsilon}
  &=\frac12\tilde E_t^\epsilon[X_t^{[2]}]
    +\epsilon\Bigl\{\tilde E_t^\epsilon[X_t^{[1]}J_t^{(2),\epsilon}]
    -\tilde E_t^\epsilon[X_t^{[1]}]\tilde E_t^\epsilon[J_t^{(2),\epsilon}]\Bigr\},\\
  A_{1,t}^{[3],\epsilon}
  &=\frac16\tilde E_t^\epsilon[X_t^{[3]}]+\frac\epsilon2\Bigl\{\tilde E_t^\epsilon[X_t^{[2]}J_t^{(2),\epsilon}]
    -\tilde E_t^\epsilon[X_t^{[2]}]\tilde E_t^\epsilon[J_t^{(2),\epsilon}]\Bigr\}\\
  &\quad+\epsilon\Bigl\{\tilde E_t^\epsilon[X_t^{[1]}J_t^{(3),\epsilon}]
    -\tilde E_t^\epsilon[X_t^{[1]}]\tilde E_t^\epsilon[J_t^{(3),\epsilon}]\Bigr\}\\
  &\quad+\epsilon^2\Bigl\{\tilde E_t^\epsilon[X_t^{[1]}J_t^{(2,2),\epsilon}]
    -\tilde E_t^\epsilon[X_t^{[1]}]\tilde E_t^\epsilon[J_t^{(2,2),\epsilon}]\Bigr\}\\
  &\quad-\epsilon^2\tilde E_t^\epsilon[J_t^{(2),\epsilon}]
    \Bigl\{\tilde E_t^\epsilon[X_t^{[1]}J_t^{(2),\epsilon}]
    -\tilde E_t^\epsilon[X_t^{[1]}]\tilde E_t^\epsilon[J_t^{(2),\epsilon}]\Bigr\}.
\end{align*}
In general, the coefficients $A_{1,t}^{[k],\epsilon}$ contain additional explicit powers of $\epsilon$ introduced by setting $\lambda=\epsilon$. They therefore differ from the coefficients obtained by collecting explicit powers of $\epsilon$ directly in the original expansion of $E[X_t^\epsilon\mid\mathcal Y_t^\epsilon]$. For example, the latter coefficients at orders two and three are
\begin{align*}
  &\tilde{A}_{1,t}^{[2],\epsilon}=\frac12\tilde E_t^\epsilon[X_t^{[2]}],\\
  &\tilde{A}_{1,t}^{[3],\epsilon}=\frac16\tilde E_t^\epsilon[X_t^{[3]}]+\tilde E_t^\epsilon[X_t^{[1]}J_t^{(2),\epsilon}]
    -\tilde E_t^\epsilon[X_t^{[1]}]\tilde E_t^\epsilon[J_t^{(2),\epsilon}].
\end{align*} 
Although $\tilde A_{1,t}^{[k],\epsilon}$ have simpler expressions, we use $A_{1,t}^{[k],\epsilon}$ because their time evolution can be described using fewer auxiliary variables.

\subsection{Generating function}\label{sec:generating-function}
Throughout this subsection, we fix $t,\lambda,\epsilon$ and use versions of conditional expectations induced by a common regular conditional distribution under $\mathsf Q^{\lambda,\epsilon}$ of the random variables appearing below, given $\mathscr Y_t^{\lambda,\epsilon}$.
Consider the function
\begin{align*}
  \Phi_t^{\lambda,\epsilon}(u)= E_{\mathsf Q^{\lambda,\epsilon}}\left[ e^{iu\cdot X_t^\epsilon}\mathsf{Z}_t^{\lambda,\epsilon}\middle|\mathscr Y_t^{\lambda,\epsilon} \right]~~(u \in \mathbb{R}^{d_1}).
\end{align*}
Taking the logarithm locally near $u=0$, we have
\begin{align*}
  \frac{1}{i}\partial_u\log \Phi_t^{\lambda,\epsilon}(u)|_{u=0}=\frac{E_{\mathsf Q^{\lambda,\epsilon}}[ X_t^\epsilon\mathsf{Z}_t^{\lambda,\epsilon}|\mathscr Y_t^{\lambda,\epsilon} ]}{E_{\mathsf Q^{\lambda,\epsilon}}[ \mathsf{Z}_t^{\lambda,\epsilon}|\mathscr Y_t^{\lambda,\epsilon} ]}=E[X_t^\epsilon|\mathscr Y_t^{\lambda,\epsilon}].
\end{align*}
More generally, we have
\begin{align*}
  \frac{1}{i^r}\partial_u^r\log \Phi_t^{\lambda,\epsilon}(u)|_{u=0}={\rm cum}_r(X_t^\epsilon\mid\mathscr Y_t^{\lambda,\epsilon}),
\end{align*}
and $\mathsf A_{r,t}^{[k],\lambda,\epsilon}$ is its coefficient of $\epsilon^k$ in the expansion.

We now construct a generating function for $\mathsf A_{r,t}^{[k],\lambda,\epsilon}$. Fix $n\geq1$ and let $z\in\mathbb R$. Define for $u\in\mathbb R^{d_1}$
\begin{align}
  &\mathbb{X}_t^{(n)}(z)=\sum_{r=1}^n\frac{z^r}{r!}X_t^{[r]},\label{eq:def-bb-X} \\
  \label{eq:def-generating-K}
  \begin{split}
    &K_t^{\lambda,\epsilon,(n)}(u,z)=\sum_{p=1}^n\frac{(-1)^{p-1}}p
    \left\{\tilde{\mathsf E}_t^{\lambda,\epsilon}\left[
    \sum_{\ell=0}^n\frac1{\ell!}
    \left(iu\cdot\mathbb{X}_t^{(n)}(z)\right)^\ell
    \mathbb{J}_t^{(n),\lambda,\epsilon}(z)
    \right]-1\right\}^p.
  \end{split}
\end{align}
Coefficients in $z$ below are Taylor coefficients at $z=0$, with $(\lambda,\epsilon)$ fixed. In the sequel, we fix \(\lambda,\epsilon\) and \(n\) unless otherwise stated, and suppress the dependence on these parameters in the notation.

Then the following result holds.

\begin{proposition}\label{prop:A-derivative-K}
  For any integer $r\geq1$ and $1\leq k\leq n$, the coefficient of $z^k$ in the polynomial
  \begin{align*}
    \frac1{i^r}\partial_u^r K_t(u,z)|_{u=0}
  \end{align*}
  is equal to $\mathsf A_{r,t}^{[k]}$.
\end{proposition}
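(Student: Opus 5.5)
The identity is purely algebraic: it compares coefficients of $z^k$ in two formal power series. The plan is to recognize $K_t$ as the logarithm of a formal "moment generating function" whose $u$-Taylor coefficients are the moment coefficients $\mathsf M_t^{[q]}(f_B)$. Then I would invoke the classical moment--cumulant relation, which is exactly the combinatorial formula \eqref{eq:cumulant-coefficient-polynomial}.

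\emph{Step 1: the moment series.} Define
\begin{align*}
  G_t(u,z)=\tilde{\mathsf E}_t\Bigl[\sum_{\ell=0}^n\frac1{\ell!}\bigl(iu\cdot\mathbb X_t^{(n)}(z)\bigr)^\ell\,\mathbb J_t^{(n)}(z)\Bigr]
  \Big/\tilde{\mathsf E}_t\bigl[\mathbb J_t^{(n)}(z)\bigr],
\end{align*}
regarded as a polynomial in $u$ with coefficients that are power series in $z$. The series $\tilde{\mathsf E}_t[\mathbb J_t^{(n)}(z)]=1+O(z)$, so the division is legitimate as formal series in $z$.

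\emph{Step 2: mixed moments of $G_t$.} For a block $B$, the coefficient $\frac{1}{i^{|B|}}\partial_{u_{i_a}}\cdots|_{u=0}$ of $G_t$ picks out $\tilde{\mathsf E}_t\bigl[\prod_{a\in B}\mathbb X_{t,i_a}^{(n)}(z)\,\mathbb J_t^{(n)}(z)\bigr]$, divided by the same denominator. Expanding $\prod_{a\in B}\mathbb X_{t,i_a}(z)$ in $z$ reproduces $\sum_p F_t^{[p]}z^p$ for $f=f_B$, up to order $n$. This is Fa\`a di Bruno applied to $f_B(X_t^\epsilon)$ with $X_t^\epsilon=X^{[0]}+\mathbb X(\epsilon)+\cdots$, via \eqref{eq:Fk}. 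One issue needs checking here: $\mathbb X$ omits the $X^{[0]}$ term. So either the $F_t^{[0]}$ part must be accounted for through the block products, or I should note that cumulants of rank $\ge2$ are shift-invariant while for $r=1$ the constant only affects $z^0$, which the hypothesis $k\ge1$ excludes. Next, expand $\mathbb J_t^{(n)}(z)$ in $z$ using \eqref{eq:def-H-bb}, \eqref{eq:def-J-bb} and \eqref{def-J-i}. The coefficient of $z^q$ is $\sum_r\lambda^r\sum_{k_1+\cdots+k_r=q+r}\mathsf J^{(k_1,\ldots,k_r)}=\mathsf I_t^{[q]}$, up to order $n$. The geometric expansion of $1/\tilde{\mathsf E}_t[\mathbb J]$, given by $\sum_j(-1)^j(\tilde{\mathsf E}[\mathbb J]-1)^j$, then produces exactly \eqref{eq:moment-expansion-coefficients}. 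Hence the $z$-coefficients of these block moments equal $\mathsf M_t^{[q]}(f_B)$ for $q\le n$.

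\emph{Step 3: taking the logarithm.} Write $\log G_t=\log N_t-\log D_t$, where $N_t$ and $D_t$ are the numerator and the denominator. Since $K_t=\sum_{p\le n}\frac{(-1)^{p-1}}p(N_t-1)^p$, it is the truncated series $\log N_t$. Moreover, $N_t-1$ has no $z^0$ term beyond those proportional to $u$. I expect this to be the main obstacle: the truncations at $p\le n$ and $\ell\le n$ must not affect any $z^k$ coefficient with $k\le n$ of the $r$-th $u$-derivative at $u=0$. This holds because every nonconstant contribution to $N_t-1$ carries either a power of $u$ (and $\mathbb X$ is $O(z)$) or a positive power of $z$ from $\mathbb J-1$. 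Consequently $(N_t-1)^p$ contributes to $u^r z^k$ only when $p\le r+k$, and I would verify that the relevant range is covered.

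\emph{Step 4: removing the normalization and concluding.} The term $\log D_t$ is independent of $u$, so it disappears under $\partial_u^r$ for $r\ge1$. Thus $\frac1{i^r}\partial_u^r K_t|_{u=0}=\frac1{i^r}\partial_u^r\log G_t|_{u=0}$ modulo $z^{n+1}$. The classical formal identity expressing the derivatives of the logarithm of a moment generating function through the partition formula $\sum_\pi(-1)^{|\pi|-1}(|\pi|-1)!\prod_B m_B$ then holds coefficientwise in $z$. Combined with Step 2, the coefficient of $z^k$ is exactly the coefficient of $z^k$ in \eqref{eq:cumulant-coefficient-polynomial}, namely $\mathsf A_{r,t}^{[k]}$.
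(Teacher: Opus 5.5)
Your route is essentially the paper's. You strip off the normalization by $\tilde{\mathsf E}_t[\mathbb J_t(z)]$, whose logarithm does not depend on $u$; the paper does this with the truncated geometric series $C_t(z)$ inside $\tilde K_t$. You then identify the $z$-coefficients of the normalized block moments with $\mathsf M_t^{[q]}(f_B)$, handle $X_t^{[0]}$ by shift invariance, and apply the moment--cumulant relation coefficientwise. The paper extracts that relation from the truncated logarithm of the conditional characteristic function via the polynomials $R_{p,r}$, but it is the same formal identity.

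\textbf{Step 3 truncation count.} As written, this step does not close. $K_t$ stops at $p=n$, and your criterion ``$(N_t-1)^p$ reaches $u^rz^k$ only if $p\le r+k$'' does not keep $p$ in that range once $r+k>n$, for instance when $k=n$. The premise is also wrong: your numerator $N_t$ has no $z^0$ term at all, including its $u$-dependent part. Indeed, $\mathbb X_t(0)=0$ and $\mathbb J_t(0)=1$ give $N_t(u,0)=1$. Hence $N_t-1\equiv0\pmod z$, so $(N_t-1)^p\equiv0\pmod{z^p}$. Only $p\le k\le n$ reach $z^k$, and $K_t\equiv\log N_t\pmod{z^{n+1}}$. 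The same divisibility, $(iu\cdot\mathbb X_t(z))^\ell\equiv0\pmod{z^\ell}$, handles the cutoff $\ell\le n$. It also handles the zero $u$-coefficients of order above $n$ that you read off in Step 2.

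\textbf{Step 2 and the shift by $X_t^{[0]}$.} Your first option misdescribes the issue. For $f_B$, $\sum_pF_t^{[p]}z^p$ is congruent to $\prod_{a\in B}\bigl(X_{t,i_a}^{[0]}+\mathbb X_{t,i_a}(z)\bigr)$ modulo $z^{n+1}$. So $X_t^{[0]}$ enters through cross terms at every order, not only through $F_t^{[0]}$.

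Commit to the second option and justify it formally. Write $m_B(z)=\sum_{q\le n}\mathsf M_t^{[q]}(f_B)z^q$, and let $\tilde m_S$ be the block moments read off from your normalized $G_t$, with $\tilde m_\emptyset=G_t(0,z)=1$. Expanding the product above gives $m_B\equiv\sum_{S\subseteq B}\prod_{a\in B\setminus S}X_{t,i_a}^{[0]}\,\tilde m_S\pmod{z^{n+1}}$. Because $\tilde m_\emptyset=1$, the $m_B$ are, modulo $z^{n+1}$, the moment coefficients of $e^{iu\cdot X_t^{[0]}}G_t(u,z)$, whose logarithm is $iu\cdot X_t^{[0]}+\log G_t$. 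So the partition polynomial built from the $m_B$ agrees with the one built from the $\tilde m_B$ for $r\ge2$. For $r=1$ it differs only by the constant $X_t^{[0]}$ in the $z^0$ coefficient, which $k\ge1$ excludes. This is the paper's remark that subtracting $X_t^{[0]}$ leaves the positive-order coefficients unchanged.

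With these two repairs your argument proves the proposition.
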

\begin{proof}
  Write
  \begin{align*}
    &C_t(z)=1+\sum_{\ell=1}^n(-1)^\ell
    \left(\tilde{\mathsf E}_t
      [\mathbb J_t(z)]-1\right)^\ell,\\
    &G_t(u,z)=\tilde{\mathsf E}_t\left[ \sum_{\ell=0}^n\frac1{\ell!}\left(iu\cdot \mathbb X_t(z) \right)^\ell
  \mathbb{J}_t(z)
   \right]
  \end{align*}
  and consider
  \begin{align}
    \tilde{K}_t(u,z)=&\sum_{p=1}^n\frac{(-1)^{p-1}}p
  \left\{C_t(z)G_t(u,z)-1\right\}^p.\label{eq:def-Ktilde}
  \end{align}
  Then, it is straightforward to check that for $1\leq k\leq n$, the coefficient of $z^k$ in 
  \begin{align*}
    \partial_u K_t(u,z)-\partial_u \tilde{K}_t(u,z)
  \end{align*}
  equals zero. Further differentiation in $u$ preserves this property. Thus we can consider $\tilde{K}_t$ instead of $K_t$.

  Consider the conditional characteristic function
\begin{align*}
  \phi(u)=E[e^{iu\cdot X_t^\epsilon}|\mathscr{Y}_t^{\lambda,\epsilon}].
\end{align*}
The conditional moments of all orders are finite, so $\phi$ admits a smooth version in $u$. Taking the branch of the logarithm that vanishes at $u=0$, we have the expansion
\begin{align*}
  \log\phi(u)=\sum_{p=1}^\infty \frac{(-1)^{p-1}}{p}(\phi(u)-1)^p
\end{align*}
on a possibly random neighborhood of $u=0$. Let us define
\begin{align*}
  \psi^{(r)}(u)=\sum_{p=1}^r \frac{(-1)^{p-1}}{p}(\phi(u)-1)^p.
\end{align*}
Then, since $\phi(0)=1$, the derivatives of $\log\phi-\psi^{(r)}$ up to order $r$ vanish at $u=0$. Hence
\begin{align*}
  i^r{\rm cum}_r(X_t^\epsilon\mid\mathscr{Y}_t^{\lambda,\epsilon})
  &=\left.\partial_u^r\log\phi(u)\right|_{u=0}
    =\left.\partial_u^r\psi^{(r)}(u)\right|_{u=0}\\
  &=\sum_{p=1}^r\frac{(-1)^{p-1}}p
    \left.\partial_u^r(\phi(u)-1)^p\right|_{u=0}.
\end{align*}
Write
\begin{align*}
  \left.\partial_u^r(\phi(u)-1)^p\right|_{u=0}
  =R_{p,r}\!\left(E[X_t^\epsilon\mid\mathscr Y_t^{\lambda,\epsilon}],\ldots,
    E[(X_t^\epsilon)^{\otimes r}\mid\mathscr Y_t^{\lambda,\epsilon}]\right),
\end{align*}
where $R_{p,r}$ is the universal tensor-valued polynomial obtained from the repeated product rule, including the factors of $i$ from the derivatives of $\phi$. 

Subtracting the deterministic vector $X_t^{[0]}$ does not change the positive-order coefficients of the cumulant expansion. Thus, by \eqref{eq:expansion}, $\mathsf A_{r,t}^{[k]}~(1\leq k\leq n)$ is the coefficient of the explicit power $\epsilon^k$ in 
\begin{align*}
  \frac1{i^r}\sum_{p=1}^r\frac{(-1)^{p-1}}pR_{p,r}\!\left(C_t(\epsilon)N_t^{(1)}(\epsilon),\ldots,
    C_t(\epsilon)N_t^{(r)}(\epsilon)\right),
\end{align*}
where
\begin{align*}
  N_t^{(j)}(z)=\tilde{\mathsf E}_t\left[
    \left(\mathbb X_t(z)\right)^{\otimes j}
    \mathbb{J}_t(z)\right].
\end{align*}

By the definition of $G_t$,
\begin{align*}
  \left.\frac1{i^j}\partial_u^jG_t(u,z)\right|_{u=0}
  =\begin{cases}
    N_t^{(j)}(z),&1\leq j\leq n,\\
    0,&j>n.
  \end{cases}
\end{align*}
Since $N_t^{(j)}(z)$ is divisible by $z^j$, the normalized derivatives above
and $N_t^{(j)}(z)$ have the same coefficients through degree $n$.

On the other hand, $C_t(z)G_t(0,z)-1$ has no terms through degree $n$ in $z$. Hence, by \eqref{eq:def-Ktilde},
\begin{align*}
  \partial_u^r\tilde{K}_t(u,z)|_{u=0}
\end{align*}
has the same coefficients through degree $n$ in $z$ as
\begin{align*}
  \sum_{p=1}^r\frac{(-1)^{p-1}}pR_{p,r}\!\left(C_t(z)N_t^{(1)}(z),\ldots,
    C_t(z)N_t^{(r)}(z)\right),
\end{align*}
which gives the desired result.

\end{proof}

As a result of Proposition \ref{prop:A-derivative-K}, we can conclude 
\begin{align*}
  \mathsf A_{r,t}^{[k]}=0~~(r>k),
\end{align*}
since the coefficient of $z^k$ in $K_t^{(k)}(u,z)$ is a polynomial in $u$ of degree at most $k$. However, we can prove a stronger vanishing result. In order to prove this result, let us write
\begin{align*}
  M_t(z)=\int_0^t \mathbb{H}_s(z)^\top R(s)^{-1}(d\mathsf{Y}_s-\tilde{H}_s ds).
\end{align*}
Then, the following lemma holds.
\begin{lemma}\label{lemma-J-bb-expression}
  \begin{align*}
    \mathbb{J}_t(z)\equiv\sum_{\ell=0}^n\frac{1}{\ell!}\left( M_t(z)-\frac{1}{2}[M(z)]_t \right)^\ell\pmod{z^{n+1}}.
  \end{align*}
\end{lemma}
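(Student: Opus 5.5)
The plan is to recognize $\mathbb J_t(z)$ as a truncated Doléans-Dade exponential and compare it with the truncated ordinary exponential of $M_t(z)-\frac12[M(z)]_t$, keeping track of powers of $z$. Write $dN_s=R(s)^{-1}(d\mathsf Y_s-\tilde H_s\,ds)$ and $\xi_s(z)=\mathbb H_s(z)^\top R(s)^{-1}(d\mathsf Y_s-\tilde H_s ds)$. Then $M_t(z)=\int_0^t\xi_s(z)$, and since $d\mathsf Y_s-\tilde H_s ds=(\mathsf H_s-\tilde H_s)ds+\sigma(s)dW_s$, we get $[M(z)]_t=\int_0^t\mathbb H_s(z)^\top R(s)^{-1}\mathbb H_s(z)\,ds$. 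By \eqref{eq:def-H-bb}, $\mathbb H_s(z)$ is divisible by $z$. Hence $M_t(z)$ is $O(z)$ and $[M(z)]_t$ is $O(z^2)$, as polynomials in $z$ with random coefficients.

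First I will show that the $r$-th iterated integral in \eqref{eq:def-J-bb},
\begin{align*}
  \mathcal I_r(t)=\int_0^t\int_0^{t_r}\cdots\int_0^{t_2}\xi_{t_1}(z)\cdots\xi_{t_r}(z),
\end{align*}
is divisible by $z^r$. Consequently $\sum_{r=0}^\infty\mathcal I_r(t)\equiv\mathbb J_t(z)\pmod{z^{n+1}}$, with the series being a finite sum modulo $z^{n+1}$. Next I will show by induction on $r$, using Itô's formula, that
\begin{align*}
  \mathcal I_r(t)=\sum_{j+2m=r}\frac{1}{j!\,m!}\,M_t(z)^j\left(-\tfrac12[M(z)]_t\right)^m,
\end{align*}
that is, $\mathcal I_r(t)=\frac{1}{r!}\,[M]_t^{r/2}\,\mathrm{He}_r\!\bigl(M_t/[M]_t^{1/2}\bigr)$ written polynomially. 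This is the standard Hermite-polynomial identity for iterated stochastic integrals of a continuous semimartingale.

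The inductive step will check that the right-hand side $P_r(M_t,[M]_t)$ satisfies $dP_r=P_{r-1}\,dM_t$ with $P_r(0,0)=0$ for $r\ge1$. Here $P_r(x,y)=\sum_{j+2m=r}x^j(-y/2)^m/(j!m!)$. Itô's formula gives
\begin{align*}
  dP_r=\partial_xP_r\,dM+\left(\partial_yP_r+\tfrac12\partial_x^2P_r\right)d[M].
\end{align*}
From the explicit form, $\partial_xP_r=P_{r-1}$ and $\partial_yP_r=-\frac12P_{r-2}=-\frac12\partial_x^2P_r$, so the drift term cancels. The same recursion, $d\mathcal I_r=\mathcal I_{r-1}\,dM$, defines $\mathcal I_r$, so uniqueness gives $\mathcal I_r=P_r$.

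Summing over $r$, $\sum_r P_r(x,y)$ regroups as the series of $\exp(x-\frac12 y)$, and so
\begin{align*}
  \sum_{r\ge0}\mathcal I_r(t)=\sum_{\ell\ge0}\frac{1}{\ell!}\left(M_t(z)-\tfrac12[M(z)]_t\right)^\ell
\end{align*}
as formal power series in $z$. Because $M_t(z)-\frac12[M(z)]_t=O(z)$, all terms with $\ell>n$ vanish modulo $z^{n+1}$, and similarly for $r>n$. This yields the lemma.

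The main obstacle is making the stochastic calculus legitimate coefficientwise in $z$. $M_t(z)$ is a polynomial in $z$ whose coefficients are stochastic integrals of different $H^{[i]}$, and $\mathbb H(z)$ has degree $n$, so $\mathbb J$ contains terms of degree beyond $n$. I plan to expand everything in powers of $z$ and verify the Itô identity $dP_r=P_{r-1}dM$ for each $z$-coefficient; equivalently, apply the multivariate Itô formula to polynomials in the finitely many semimartingales $\int H^{[i]\top}R^{-1}(d\mathsf Y-\tilde H ds)$ and their brackets. Alternatively, I can fix real $z$ and use that both sides are polynomials in $z$. Integrability of the integrands, from the polynomial growth \eqref{eq-assumption-h} and Gaussian moments, ensures that the stochastic integrals are well defined and that the pathwise identities hold almost surely.
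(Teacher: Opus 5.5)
Your argument is correct, but it is organized differently from the paper's. The paper never splits $\mathbb J_t(z)$ into its iterated integrals. It applies It\^o's formula once to the truncated exponential $\mathcal E_t(z)=\sum_{\ell=0}^n\frac1{\ell!}\bigl(M_t(z)-\tfrac12[M(z)]_t\bigr)^\ell$. The two leftover boundary terms, $\frac1{n!}(\cdot)^n\,dM$ and $\frac1{2(n-1)!}(\cdot)^{n-1}\,d[M]$, are divisible by $z^{n+1}$, so $d\mathcal E_t\equiv\mathcal E_t\,dM_t$. The paper then compares this with $d\mathbb J_t\equiv\mathbb J_t\,dM_t$ and $\mathbb J_0=\mathcal E_0=1$.

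You instead prove the exact, untruncated Hermite identity $\mathcal I_r(t)=P_r(M_t,[M]_t)$ for every $r$, using $\partial_xP_r=P_{r-1}$ and $\partial_yP_r+\tfrac12\partial_x^2P_r=0$. The congruence modulo $z^{n+1}$ then enters only in a purely algebraic regrouping. The difference between the sums over $j+2m\le n$ and over $j+m\le n$ consists of monomials $M^j[M]^m$ with $j+2m>n$, and these are divisible by $z^{n+1}$. (Incidentally, you do not need ``uniqueness'' to get $\mathcal I_r=P_r$: integrating $dP_r=P_{r-1}\,dM$ with $P_r(0,0)=0$ gives it directly.)

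Your route avoids any uniqueness statement for an equation that holds only modulo $z^{n+1}$, and it yields finer order-by-order information. The paper's closing step, ``this yields the desired equivalence,'' tacitly uses that $dX\equiv X\,dM$, $X_0=0$ and $M\equiv0\pmod z$ force $X\equiv0\pmod{z^{n+1}}$. That fact holds because the equations for the $z$-coefficients are triangular, but it is left unstated. The paper's route is shorter: a single It\^o computation on the whole generating function, with no Hermite-polynomial bookkeeping. Your coefficientwise justification (fix real $z$, then identify polynomial coefficients a.s.) is adequate for both routes.
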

\begin{proof}
  Let us write
  \begin{align*}
    \mathcal{E}_t(z)=\sum_{\ell=0}^n\frac{1}{\ell!}\left( M_t(z)-\frac{1}{2}[M(z)]_t \right)^\ell.
  \end{align*}
  Then, It\^o's formula gives
  \begin{align*}
    d\mathcal{E}_t(z)=&\sum_{\ell=0}^{n-1}\frac{1}{\ell!}\left( M_t(z)-\frac{1}{2}[M(z)]_t \right)^\ell \left\{ dM_t(z)-\frac{1}{2}d[M(z)]_t \right\}\\
    &+\frac{1}{2}\sum_{\ell=0}^{n-2}\frac{1}{\ell!}\left( M_t(z)-\frac{1}{2}[M(z)]_t \right)^\ell d[M(z)]_t \\
    =&\sum_{\ell=0}^{n-1}\frac{1}{\ell!}\left( M_t(z)-\frac{1}{2}[M(z)]_t \right)^\ell dM_t(z)\\
    &-\frac{1}{2}\frac{1}{(n-1)!}\left( M_t(z)-\frac{1}{2}[M(z)]_t \right)^{n-1}d[M(z)]_t\\
    \equiv &\mathcal E_t(z)dM_t(z)\pmod{z^{n+1}},
  \end{align*}
  noting that
  \begin{align*}
    M_t(z)\equiv 0\pmod{z},~~[M(z)]_t\equiv 0\pmod{z^2}.
  \end{align*}

  On the other hand, we have
  \begin{align*}
    d \mathbb{J}_t(z)\equiv\mathbb{J}_t(z)dM_t(z)\pmod{z^{n+1}}
  \end{align*}
  with
  \begin{align*}
    \mathbb{J}_0(z)=\mathcal{E}_0(z)=1.
  \end{align*}
  This yields the desired equivalence.
\end{proof}
\begin{align*}
  N_t(z)=&M_t(z)-\frac{1}{2}[M(z)]_t\\
  =&\int_0^t \mathbb{H}_s(z)^\top R(s)^{-1}(d\mathsf{Y}_s-\tilde{H}_s ds)
  -\frac{1}{2}\int_0^t \mathbb{H}_s(z)^\top R(s)^{-1}\mathbb{H}_s(z)\,ds
\end{align*}
\begin{proposition}\label{prop:K-mixed-cumulants}
  We have
  \begin{align*}
    &K_t(u,z)
    \equiv\sum_{\substack{k,m\geq0\\1\leq k+m\leq n}}
      \frac{i^k}{k!m!}\operatorname{cum}_{\tilde{\mathsf E}_t}\left(
      \underbrace{\mathbb X_t(z),\ldots,\mathbb X_t(z)}_{k\text{ times}},
      \underbrace{N_t(z),\ldots,N_t(z)}_{m\text{ times}}
      \right)[u^{\otimes k}]\pmod{z^{n+1}}.
  \end{align*}
\end{proposition}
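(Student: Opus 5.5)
The plan is to recognize $K_t(u,z)$, modulo $z^{n+1}$, as a truncated logarithm of a joint moment generating function of the pair $(\mathbb X_t(z),N_t(z))$ under the normalized functional $\tilde{\mathsf E}_t$. Then we expand that logarithm in joint cumulants. First, substitute Lemma \ref{lemma-J-bb-expression} into the definition \eqref{eq:def-generating-K}. Modulo $z^{n+1}$, $\mathbb J_t(z)$ may be replaced by $\sum_{m=0}^n N_t(z)^m/m!$. Since $\mathbb X_t(z)\equiv0$ and $N_t(z)\equiv0\pmod z$, every term $(iu\cdot\mathbb X_t)^\ell N_t^m$ with $\ell+m>n$ vanishes modulo $z^{n+1}$. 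Hence
\begin{align*}
  G_t(u,z)\equiv \tilde{\mathsf E}_t\Bigl[\sum_{\ell+m\leq n}\frac{(iu\cdot\mathbb X_t(z))^\ell N_t(z)^m}{\ell!\,m!}\Bigr]\pmod{z^{n+1}}.
\end{align*}
Formally, this is $\tilde{\mathsf E}_t[\exp(iu\cdot\mathbb X_t(z)+N_t(z))]$ truncated at total degree $n$ in $(\mathbb X,N)$. We work throughout with the common regular conditional distribution fixed in Subsection \ref{sec:generating-function}, so that $\tilde{\mathsf E}_t$ acts as an ordinary expectation and all moments involved are finite by the Girsanov proposition.

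Next, introduce two formal bookkeeping variables. Replace $\mathbb X_t(z)$ by $s\mathbb X_t(z)$ and $N_t(z)$ by $s N_t(z)$ for a formal scalar $s$, and regard everything as a polynomial or power series in $(u,s)$. The classical identity between moments and joint cumulants, in the form $\log\tilde{\mathsf E}[e^{s(iu\cdot X+N)}]=\sum_{k+m\geq1}\frac{s^{k+m}i^k}{k!m!}\operatorname{cum}(X^{\times k},N^{\times m})[u^{\otimes k}]$, is an identity of formal power series in $s$ with polynomial coefficients in $u$. It holds as a purely algebraic consequence of multilinearity and the moment–cumulant (partition) formula, exactly as in \eqref{eq:conditional-cumulant}. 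Truncating the series $\log(1+w)=\sum_p(-1)^{p-1}w^p/p$ at $p=n$ is harmless for coefficients of $s^j$ with $j\leq n$, because $w=G-1$ is divisible by $s$. Likewise, truncating the exponential at total degree $n$ affects only $s^{>n}$.

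Finally, set $s=1$ and return to $z$. Every term of total degree $j$ in $(\mathbb X_t,N_t)$ is divisible by $z^j$, because each factor is divisible by $z$. Therefore all terms with $s$-degree $>n$ vanish modulo $z^{n+1}$, and the identity in $s$ descends to a congruence modulo $z^{n+1}$. This yields exactly the stated sum over $1\leq k+m\leq n$.

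The main obstacle is the bookkeeping in the second step. We must justify that the truncations (of $\mathbb J$ via Lemma \ref{lemma-J-bb-expression}, of the exponential at $\ell\leq n$, and of the logarithm at $p\leq n$) interact correctly. Concretely, every discarded term must be divisible by $z^{n+1}$, and the formal log–exp identity must be applicable to tensor-valued $\mathbb X_t$ paired with $u$. I would handle this by grading by the total degree $j=k+m$. The $z$-divisibility by $z^j$ then converts the grading into the $z$-adic filtration. The remaining verification is the standard combinatorial identity for joint cumulants, which needs no analytic input beyond finiteness of moments.
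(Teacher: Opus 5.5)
Your proposal is correct and is essentially the paper's argument. Both proofs use Lemma~\ref{lemma-J-bb-expression} to identify $K_t$ modulo $z^{n+1}$ as the truncated cumulant generating function of the pair $(\mathbb X_t(z),N_t(z))$, and both use divisibility by $z$ to discard terms of total degree $>n$. The only difference is cosmetic: the paper attaches a characteristic-function variable $v$ to $N_t(z)$ and substitutes $v=-i$ at the end, whereas you use a single grading variable $s$ with the real exponential in $N_t(z)$ directly.
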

\begin{proof}
  Let us introduce
  \begin{align*}
    \hat{K}_t(u,v,z)=\sum_{p=1}^n\frac{(-1)^{p-1}}p
    \left\{\tilde{\mathsf E}_t\left[
    \sum_{\ell=0}^n\frac1{\ell!}
    \left(iu\cdot\mathbb{X}_t(z)+iv N_t(z) \right)^\ell
    \right]-1\right\}^p,
  \end{align*}
  for $u \in \mathbb{R}^{d_1}$ and $v \in \mathbb{R}$. Then, for $k,m\geq0$ with $1\leq k+m\leq n$, the Taylor expansion gives
  \begin{align*}
    \partial_v^m\partial_{u}^k\hat{K}_t(0,0,z)&=\left.\partial_v^m\partial_{u}^k\log \tilde{\mathsf E}_t\left[ \exp\left( iu\cdot\mathbb{X}_t(z)+iv N_t(z) \right) \right]\right|_{u=v=0}\\
    &=i^{k+m}\operatorname{cum}_{\tilde{\mathsf E}_t}\left(
      \underbrace{\mathbb X_t(z),\ldots,\mathbb X_t(z)}_{k\text{ times}},
      \underbrace{N_t(z),\ldots,N_t(z)}_{m\text{ times}}
      \right).
  \end{align*}
  Since $\mathbb X_t(z)$ and $N_t(z)$ are divisible by $z$, terms of total degree greater than $n$ in $(u,v)$ are divisible by $z^{n+1}$. Thus we can write
  \begin{align*}
    &\hat{K}_t(u,v,z)
    \equiv\sum_{\substack{k,m\geq0\\1\leq k+m\leq n}}
      \frac{i^{k+m}v^m}{k!m!}\operatorname{cum}_{\tilde{\mathsf E}_t}\left(
      \underbrace{\mathbb X_t(z),\ldots,\mathbb X_t(z)}_{k\text{ times}},
      \underbrace{N_t(z),\ldots,N_t(z)}_{m\text{ times}}
      \right)[u^{\otimes k}]\pmod{z^{n+1}}.
  \end{align*}
  On the other hand, due to Lemma \ref{lemma-J-bb-expression}, we have
  \begin{align*}
    K_t(u,z)
    \equiv& \sum_{p=1}^n\frac{(-1)^{p-1}}p
    \left\{\tilde{\mathsf E}_t\left[
    \sum_{\ell=0}^n\frac1{\ell!}
    \left(iu\cdot\mathbb{X}_t(z)\right)^\ell
    \sum_{\ell=0}^n\frac{1}{\ell!}N_t(z)^\ell
    \right]-1\right\}^p\\
    &\equiv \sum_{p=1}^n\frac{(-1)^{p-1}}p\left\{\tilde{\mathsf E}_t\left[
    \sum_{\ell=0}^n\frac1{\ell!}
    \left(iu\cdot\mathbb{X}_t(z)+N_t(z)\right)^\ell
    \right]-1\right\}^p\\
    &=\hat K_t(u,-i,z)
    \pmod{z^{n+1}}.
  \end{align*}
  Substituting $v=-i$ into the polynomial identity above and using $i^{k+m}(-i)^m=i^k$ proves the claim.
\end{proof}

For $p\geq 1$, define $N_t^{[p]}$ by
\begin{align}
  \begin{split}
    N_t^{[p]}=&\lambda\int_0^t (H_s^{[p+1]})^\top R(s)^{-1}(d\mathsf{Y}_s-\tilde{H}_s ds)
    -\frac{\lambda^2}{2}\sum_{i,j\geq 1,i+j=p}\int_0^t (H_s^{[i+1]})^\top R(s)^{-1} H_s^{[j+1]}ds.
  \end{split}\label{eq:def-Np}
\end{align}
Then, $N_t^{[p]}$ is the coefficient of $z^p$ in $N_t$ for $1\leq p\leq n$. As a corollary to Propositions \ref{prop:A-derivative-K} and \ref{prop:K-mixed-cumulants}, we obtain the following result.
\begin{corollary}\label{cor-A-expression-cumulant}
  For $1\leq r\leq k\leq n$, we have
  \begin{align*}
    \mathsf A_{r,t}^{[k]}
    ={}&\sum_{m=0}^{k-r}\frac{1}{m!}
    \sum_{\substack{p_1,\ldots,p_r\geq1,\ q_1,\ldots,q_m\geq1\\
      \sum\limits_{a=1}^r p_a+\sum\limits_{b=1}^m q_b=k}}
      \frac{1}{p_1!\cdots p_r!}
      \times\operatorname{cum}_{\tilde{\mathsf E}_t}\left(
      X_t^{[p_1]},\ldots,X_t^{[p_r]},
      N_t^{[q_1]},\ldots,N_t^{[q_m]}
      \right).
  \end{align*}
\end{corollary}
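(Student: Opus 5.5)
The plan is to read off the coefficient of $z^k$ in $\frac1{i^r}\partial_u^r K_t(u,z)|_{u=0}$ in two ways. By Proposition~\ref{prop:A-derivative-K} this coefficient equals $\mathsf A_{r,t}^{[k]}$. By Proposition~\ref{prop:K-mixed-cumulants} it can be computed from the mixed-cumulant representation, so the whole proof reduces to expanding the right-hand side of that proposition in powers of $z$.

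First, differentiate the congruence of Proposition~\ref{prop:K-mixed-cumulants} $r$ times in $u$ and set $u=0$. The congruence is an identity of polynomials in $(u,z)$ modulo $z^{n+1}$, so it survives differentiation in $u$. Only the terms with index $k'=r$ (in the paper's summation variable) survive, because the term is a homogeneous form $[u^{\otimes k'}]$ of degree $k'$. Since the cumulant is symmetric in its $\mathbb X$ slots, $\partial_u^r$ of $\frac{i^r}{r!}\operatorname{cum}(\ldots)[u^{\otimes r}]$ is $i^r\operatorname{cum}(\ldots)$. This gives
\begin{align*}
  \frac1{i^r}\partial_u^rK_t(u,z)\Big|_{u=0}\equiv\sum_{m=0}^{n-r}\frac1{m!}\operatorname{cum}_{\tilde{\mathsf E}_t}\bigl(\mathbb X_t(z),\ldots,\mathbb X_t(z),N_t(z),\ldots,N_t(z)\bigr)\pmod{z^{n+1}},
\end{align*}
with $r$ copies of $\mathbb X_t(z)$ and $m$ copies of $N_t(z)$.

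Second, expand by multilinearity. We have $\mathbb X_t(z)=\sum_{p\ge1}\frac{z^p}{p!}X_t^{[p]}$ by \eqref{eq:def-bb-X}. Modulo $z^{n+1}$ we also have $N_t(z)\equiv\sum_{q=1}^n z^qN_t^{[q]}$, which follows from \eqref{eq:def-H-bb}: the $z$-expansion of $\mathbb H_s(z)$ and of the quadratic term $\frac12\mathbb H^\top R^{-1}\mathbb H$ reproduce \eqref{eq:def-Np}. Since joint cumulants are multilinear, the mixed cumulant becomes
\[
\sum_{p_a,q_b\ge1} z^{\sum p_a+\sum q_b}\,\frac{1}{p_1!\cdots p_r!}\,\operatorname{cum}_{\tilde{\mathsf E}_t}\bigl(X_t^{[p_1]},\ldots,X_t^{[p_r]},N_t^{[q_1]},\ldots,N_t^{[q_m]}\bigr).
\]
The $z^k$ coefficient collects the index tuples with $\sum p_a+\sum q_b=k$.

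Third, restrict the range of $m$. Each $p_a$ and $q_b$ is at least $1$, so $k\ge r+m$, i.e. $m\le k-r$. Because $k\le n$, this bound lies inside the range $m\le n-r$, so truncating the sum at $m=k-r$ loses nothing. Comparing the two readings of the coefficient of $z^k$ gives the stated formula.

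The only delicate point is the multilinear expansion. It requires finite moments of the $X_t^{[p]}$ and $N_t^{[q]}$ under $\tilde{\mathsf E}_t$, so that all the cumulants are defined. These moments are finite by the Girsanov-type proposition, which reduces them to moments of Gaussian polynomial functionals, and this is already implicit in Proposition~\ref{prop:K-mixed-cumulants}. Otherwise the argument is bookkeeping.
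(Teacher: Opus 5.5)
Your proposal is correct and takes the same route as the paper. The paper gives no separate proof and presents this result as an immediate corollary of Propositions~\ref{prop:A-derivative-K} and~\ref{prop:K-mixed-cumulants}; your three steps are exactly the bookkeeping it leaves implicit: apply $\frac{1}{i^r}\partial_u^r|_{u=0}$ to isolate the terms with $r$ copies of $\mathbb X_t(z)$, expand $\mathbb X_t(z)$ and $N_t(z)\equiv\sum_{q=1}^n z^qN_t^{[q]}$ (mod $z^{n+1}$) by multilinearity, and observe that $m\le k-r\le n-r$.
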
 

\begin{lemma}\label{lemma-gaussian-cumulant-vanishing}
  Let $\mathcal H_q$ denote the $q$-th Wiener chaos of a centered Gaussian family. For $m\geq2$ and integers $d_1,\ldots,d_m\geq0$, suppose that
  \begin{align*}
    F_j\in\bigoplus_{q=0}^{d_j}\mathcal H_q,\qquad j=1,\ldots,m.
  \end{align*}
  Then, we have
  \begin{align*}
    \sum_{j=1}^m d_j<2(m-1)~~\Longrightarrow~~\operatorname{cum}(F_1,\ldots,F_m)=0.
  \end{align*}
\end{lemma}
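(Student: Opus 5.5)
This is the classical vanishing statement for joint cumulants of elements of finite Wiener chaoses. I would prove it with the diagram (graph) formula for cumulants of multiple Wiener--It\^o integrals. By multilinearity of the cumulant it suffices to treat homogeneous elements.

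\textbf{Step 1: reduction to homogeneous chaos.} Decompose each $F_j=\sum_{q=0}^{d_j}F_j^{(q)}$ with $F_j^{(q)}\in\mathcal H_q$. Then
\begin{align*}
  \operatorname{cum}(F_1,\ldots,F_m)=\sum_{q_1\leq d_1,\ldots,q_m\leq d_m}\operatorname{cum}\bigl(F_1^{(q_1)},\ldots,F_m^{(q_m)}\bigr).
\end{align*}
Each summand satisfies $\sum_j q_j\leq\sum_j d_j<2(m-1)$, so it suffices to show that $\operatorname{cum}(G_1,\ldots,G_m)=0$ whenever $G_j\in\mathcal H_{q_j}$ and $\sum_j q_j<2(m-1)$.

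\textbf{Step 2: the constant case.} If some $q_j=0$, then $G_j$ is a constant. Since $m\geq2$, a joint cumulant containing a constant argument vanishes. Hence we may assume all $q_j\geq1$.

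\textbf{Step 3: the diagram formula.} Approximate each $G_j$ in $L^2$ by finite sums of Wick products $:\!\xi_{1}\cdots\xi_{q_j}\!:$ of elements of the Gaussian family. This is legitimate because cumulants of elements of a fixed finite chaos are continuous in $L^2$, since all $L^p$ norms are equivalent on a finite chaos by hypercontractivity. By multilinearity, it then suffices to treat products of Wick monomials.

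For these, the diagram formula for Wick products applies. The mixed moment $E[\prod_j G_j]$ is the sum, over perfect matchings of the $\sum_j q_j$ vertices (grouped into $m$ rows of sizes $q_j$) that contain no edge within a row, of products of covariances. The joint cumulant is the same sum restricted to matchings whose row-graph is \emph{connected}, meaning the graph on the $m$ rows in which two rows are joined if some edge links them.

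A connected graph on $m$ vertices needs at least $m-1$ edges. Each edge of the matching uses two vertices, so a connected perfect matching requires $\sum_j q_j\geq 2(m-1)$. Under our hypothesis no such matching exists, and therefore the cumulant is zero.

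\textbf{Main obstacle.} The step needing most care is justifying the connected-diagram characterization of cumulants. I would derive it by Möbius inversion: moments are sums over all admissible matchings, and grouping each matching by the partition of the rows into its connected components shows that moments equal sums over partitions of products of connected-diagram sums. This is exactly the moment--cumulant relation \eqref{eq:conditional-cumulant}, so the connected sums are the cumulants.

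Alternatively, I could cite the standard result (Nualart--Peccati; Peccati--Taqqu, \emph{Wiener Chaos: Moments, Cumulants and Diagrams}). Either way, the approximation argument for the $L^2$ continuity should be stated briefly.
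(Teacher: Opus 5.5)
Your proposal is correct and follows essentially the same route as the paper. The paper also reduces by multilinearity and the vanishing of cumulants with a constant argument to homogeneous chaoses with $q_j\geq1$, cites Corollary 7.3.1 of Peccati--Taqqu for the connected-diagram characterization, and observes that a connected diagram on $m$ rows needs at least $m-1$ of its $\frac12\sum_j q_j$ edges. Your Wick-monomial approximation and Möbius-inversion sketch just spell out what the paper delegates to that reference.
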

\begin{proof}
  By multilinearity and the vanishing of cumulants with a constant argument, it suffices to consider $F_j\in\mathcal H_{q_j}$ with $1\leq q_j\leq d_j$. By Corollary 7.3.1 of \citet{peccati2011wiener}, only connected pairing diagrams contribute to the cumulant. Such a diagram has $m$ vertices and $\frac12\sum_{j=1}^m q_j$ edges, whereas connectedness requires at least $m-1$ edges. Thus no such diagram exists under the stated inequality.
\end{proof}

\begin{theorem}\label{theorem:A-high-rank-vanishing}
  For $r\geq1$ and $1\leq k\leq n$ with $k<2r-2$, we have
  \begin{align*}
    \mathsf A_{r,t}^{[k]}=0
  \end{align*}
\end{theorem}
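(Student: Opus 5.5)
The plan is to start from the explicit representation of Corollary~\ref{cor-A-expression-cumulant}. There $\mathsf A_{r,t}^{[k]}$ is a finite linear combination of conditional joint cumulants
\begin{align*}
  \operatorname{cum}_{\tilde{\mathsf E}_t}\bigl(X_t^{[p_1]},\ldots,X_t^{[p_r]},N_t^{[q_1]},\ldots,N_t^{[q_m]}\bigr),
  \qquad \sum_a p_a+\sum_b q_b=k,\quad 0\le m\le k-r.
\end{align*}
I would show that each of these cumulants vanishes when $k<2r-2$, by applying Lemma~\ref{lemma-gaussian-cumulant-vanishing} to a suitable conditional Gaussian family.

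The first step is to identify that family. Under $\mathsf Q^{\lambda,\epsilon}$ weighted by $\tilde Z_t^{\lambda,\epsilon}$, the pair $(V,\mathsf Y^{\lambda,\epsilon})$ on $[0,t]$ has the law of the linear Gaussian model with observation drift $\tilde H^\lambda=H^{[0]}+\lambda H^{[1]}$. Indeed, $H^{[1]}$ is affine in $X^{[1]}$, which is affine in $V$, as the proposition following \eqref{eq:def-E-tilde} (via Girsanov) shows. Hence, given $\mathscr Y_t^{\lambda,\epsilon}$, the process $V$ is conditionally Gaussian (a Kalman--Bucy smoother law) with a $\mathscr Y_t$-measurable mean. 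Working with a regular conditional distribution as in Section~\ref{sec:generating-function}, I would take $\mathcal H_q$ to be the Wiener chaos of the centered conditional Gaussian family $V-\tilde{\mathsf E}_t[V]$. In that family, a polynomial functional of degree $\le d$ in $V$, with $\mathscr Y_t$-measurable coefficients, lies in $\bigoplus_{q\le d}\mathcal H_q$.

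The second step is the degree count. From \eqref{eq:Xp}, by induction on $p$, $X_t^{[p]}$ is a (multiple Wiener--It\^o) polynomial functional of $V$ of degree at most $p$, so we may take $d=p$ for $X_t^{[p]}$. For $N_t^{[q]}$, given by \eqref{eq:def-Np}, the integrand $H_s^{[q+1]}$ has degree at most $q+1$ in $V$ by \eqref{eq:Hk}. The term $\int (H_s^{[q+1]})^\top R^{-1}\tilde H_s\,ds$ then has degree at most $q+2$, since $\tilde H_s$ has degree $1$. The integral against $d\mathsf Y_s$ is, conditionally on the observation path, of degree at most $q+1$. The quadratic term $\int (H^{[i+1]})^\top R^{-1}H^{[j+1]}ds$ with $i+j=q$ has degree at most $q+2$. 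Thus $d=q+2$ works for $N_t^{[q]}$. The cumulant has $r+m$ arguments, and
\begin{align*}
  \sum_a p_a+\sum_b (q_b+2)=k+2m<2(r+m-1)\iff k<2r-2 .
\end{align*}
When $r+m\ge2$, Lemma~\ref{lemma-gaussian-cumulant-vanishing} therefore gives vanishing. The remaining case $r+m=1$ forces $r=1$, and then $k<0$ is impossible, so it is vacuous. Summing over the terms of Corollary~\ref{cor-A-expression-cumulant} yields $\mathsf A_{r,t}^{[k]}=0$.

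The main obstacle is the rigorous justification of the first step. One must show that the conditional cumulants $\operatorname{cum}_{\tilde{\mathsf E}_t}$ coincide with cumulants under a genuine Gaussian measure. The harder point is that the stochastic integrals $\int H_s^{[q+1]}R^{-1}d\mathsf Y_s$, whose integrands depend on $V$, become, for a.e.\ fixed observation path, elements of finite chaos of degree $\le q+1$ in the conditional Gaussian family. I would handle this by a transfer to $\bar{\mathsf Y}$ and the model with independent observation noise. I would then approximate the $d\mathsf Y$-integrals by Riemann-type sums, which are polynomials in $V$ with coefficients given by increments of $\mathsf Y$. Finally, I would pass to the limit in $L^2$ of the conditional law, using that finite-chaos spaces are closed and that hypercontractivity gives moment bounds sufficient for convergence of cumulants.
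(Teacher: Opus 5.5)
Your proposal is essentially the paper's proof. It uses the same representation from Corollary~\ref{cor-A-expression-cumulant} and the same chaos bounds ($p$ for $X_t^{[p]}$, $q+2$ for $N_t^{[q]}$) relative to the centered conditional Gaussian family $V-\tilde{\mathsf E}_t[V]$, justified by Riemann-sum approximations with frozen observation increments and closedness of finite chaos (the paper applies this to the $dV$ integrals in \eqref{eq:Xp} as well as the $d\mathsf Y$ integrals), followed by Lemma~\ref{lemma-gaussian-cumulant-vanishing} with the count $k+2m<2(r+m-1)$.

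The one omission is the case $r>k$ (e.g.\ $r=3$, $k\in\{1,2\}$), which Corollary~\ref{cor-A-expression-cumulant} does not cover as stated. The paper settles it separately via Proposition~\ref{prop:A-derivative-K}: the coefficient of $z^k$ in $K_t(u,z)$ is a polynomial in $u$ of degree at most $k$.
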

\begin{proof}
  First note that $\{V_s\}_{0\leq s\leq t}$ is a Gaussian process under $\tilde{\mathsf E}_t$ (see Theorem 3.7 in \citet{KURISAKI2026104997} and Theorem 2.4 in \citet{kurisaki2026recursions}). Let $\mathcal{H}_k$ denote the $k$-th Wiener chaos space with respect to the centered Gaussian process $\{V_s-\tilde{\mathsf E}_t[V_s]\}_{0\leq s\leq t}$ in $L^2(\tilde{\mathsf E}_t)$. 
  
  Then, using discrete approximations of the $dV$ integrals and the conditional $L^2$ convergence along subsequences described below, it follows recursively from \eqref{eq:Xp} and \eqref{eq:Hk} that 
  \begin{align*}
    X_s^{[p]},H_s^{[p]}\in \bigoplus_{k=0}^p \mathcal{H}_k
  \end{align*}
  for any $p\geq 0$ and $0\leq s\leq t$. Furthermore, if we write
  \begin{align*}
    \bar{\mathsf Y}_t=\int_0^t R(s)^{-1}d\mathsf{Y}_s,~~t_i^n=t\frac{i}{n}
  \end{align*}
  then we have
  \begin{align*}
    \sum_{i=1}^{n}(H_{t_{i-1}^n}^{[p]})^\top (\bar{\mathsf Y}_{t_{i}^n}-\bar{\mathsf Y}_{t_{i-1}^n})\to \int_0^t (H_s^{[p]})^\top R(s)^{-1}d\mathsf{Y}_s
  \end{align*}
  in $L^2(\tilde{\mathsf E}_t)$ along a subsequence as $n\to\infty$, almost surely, by applying the argument following Proposition 4.5 in \citet{kurisaki2026ks}. Since the observation increments are fixed under this conditional expectation, the left-hand side belongs to $\bigoplus_{k=0}^p\mathcal H_k$. By closedness, the right-hand side also belongs to this space. Thus, \eqref{eq:def-Np} gives
  \begin{align*}
    N_t^{[p]}\in  \bigoplus_{k=0}^{p+2} \mathcal{H}_k,\qquad p\geq1.
  \end{align*}
  If $r>k$, the claim follows from Proposition \ref{prop:A-derivative-K}. Otherwise, each cumulant in Corollary \ref{cor-A-expression-cumulant} has $r+m\geq2$ arguments with total chaos degree at most
  \begin{align*}
    \sum_{a=1}^r p_a+\sum_{b=1}^m(q_b+2)
    =k+2m<2(r+m-1).
  \end{align*}
  Applying Lemma \ref{lemma-gaussian-cumulant-vanishing} componentwise under $\tilde{\mathsf E}_t$ shows that every such cumulant vanishes.
\end{proof}

\section{Generalized Zakai and Kushner--Stratonovich equations}
\label{section:generalized-filtering-equations}
In this section, we derive the Kushner--Stratonovich equation needed to describe the evolution of the expectation appearing in $K_t(u,z)$. Since the integrand of the expectation depends on the observation process $\mathsf Y^{\lambda,\epsilon}$, we need to extend the usual Kushner--Stratonovich equation to cover this setting. However, we can follow the derivation in the usual case given in \citet{bain_crisan2009filtering}.

We first introduce process versions of the tilted expectations in
\eqref{eq:def-E-tilde}. Throughout this section, fix
$\lambda,\epsilon\in[0,1)$ and $T>0$. Let ${}^o$ denote optional projection
onto the observation filtration $\{\mathscr Y_t^{\lambda,\epsilon}\}_{0\leq t\leq T}$
under $\mathsf Q^{\lambda,\epsilon}$, with the usual augmentation.

For a nonnegative real-valued measurable process
$U=\{U_t\}_{0\leq t\leq T}$, define the unnormalized tilted projection by
\begin{align}
  \label{eq:def-rho-tilde}
  \tilde\rho_t(U)
  :=\lim_{n\to\infty}{}^o\bigl((U\tilde Z^{\lambda,\epsilon})\wedge n\bigr)_t,
  \qquad 0\leq t\leq T.
\end{align}
The optional projections of the bounded processes are taken in monotone
versions, so that the limit defines an extended nonnegative optional
process. For each fixed $t$, conditional monotone convergence gives
\begin{align}
  \label{eq:rho-tilde-conditional-expectation}
  \tilde\rho_t(U)
  &=\lim_{n\to\infty}
    E_{\mathsf Q^{\lambda,\epsilon}}\!\left[
      (U_t\tilde Z_t^{\lambda,\epsilon})\wedge n
      \,\middle|\,\mathscr Y_t^{\lambda,\epsilon}\right]
      =E_{\mathsf Q^{\lambda,\epsilon}}\!\left[
      U_t\tilde Z_t^{\lambda,\epsilon}
      \,\middle|\,\mathscr Y_t^{\lambda,\epsilon}\right]
  \qquad\text{a.s.}
\end{align}
For a signed process $U$, we impose
\begin{align}
  E_{\mathsf Q^{\lambda,\epsilon}}\!\left[|U_t|\tilde Z_t^{\lambda,\epsilon}\right]<\infty,
  \qquad 0\leq t\leq T. \label{eq:tilted-process-integrability}
\end{align}
and set $\tilde\rho(U)=\tilde\rho(U^+)-\tilde\rho(U^-)$. The same definition applies componentwise to vector- and tensor-valued processes satisfying \eqref{eq:tilted-process-integrability}.

The corresponding normalized tilted projection is
\begin{align}
  \label{eq:def-pi-tilde}
  \tilde\pi_t(U)
  :=\frac{\tilde\rho_t(U)}{\tilde\rho_t(1)},
\end{align}
where $1$ denotes the constant process. The denominator is the optional
version of
\[
E_{\mathsf Q^{\lambda,\epsilon}}[\tilde Z_t^{\lambda,\epsilon}\mid\mathscr Y_t^{\lambda,\epsilon}];
\]
for each fixed $t$, it is finite and strictly positive almost surely.
Combining \eqref{eq:rho-tilde-conditional-expectation} with
\eqref{eq:def-E-tilde}, we obtain
\begin{align}
  \label{eq:pi-tilde-E-tilde}
  \tilde\pi_t(U)
  =\frac{E_{\mathsf Q^{\lambda,\epsilon}}[U_t\tilde Z_t^{\lambda,\epsilon}\mid\mathscr Y_t^{\lambda,\epsilon}]}
         {E_{\mathsf Q^{\lambda,\epsilon}}[\tilde Z_t^{\lambda,\epsilon}\mid\mathscr Y_t^{\lambda,\epsilon}]}
  =\tilde {\mathsf E}_t^{\lambda,\epsilon}[U_t]
  \qquad\text{almost surely}
\end{align}
for every fixed $t\in[0,T]$. The dependence of $\tilde\rho$ and
$\tilde\pi$ on $(\lambda,\epsilon)$ is suppressed in the notation.

\begin{theorem}[Zakai equation]\label{theorem:Zakai}
  Let $\{U_t\}_{t\geq 0}$ be a real-valued continuous semimartingale satisfying \eqref{eq:tilted-process-integrability} and
  \begin{align*}
    dU_t=a_tdt+b_t^\top R(t)^{-1}\bigl(d\mathsf Y_t^{\lambda,\epsilon}-\tilde H_t dt\bigr)+c_t^\top dV_t,
  \end{align*}
  where $a$, $b$ and $c$ are progressively measurable $\mathbb{R}$, $\mathbb{R}^{d_2}$ and $\mathbb{R}^{m_1}$-valued processes such that
  \begin{align*}
    E_{\mathsf Q^{\lambda,\epsilon}}\!\left[
    \int_0^T\tilde Z_s^{\lambda,\epsilon}
    \left\{|a_s|+|b_s|^2+|c_s|^2
    +(1+|U_s|^2)|\tilde H_s|^2\right\}ds
    \right]<\infty.
  \end{align*}
  Then $\tilde\rho(U)$ admits a continuous version satisfying, for $0\leq t\leq T$,
  \begin{align*}
    \tilde{\rho}_t(U)
    ={}&\tilde{\rho}_0(U)+\int_0^t \tilde{\rho}_s(a)ds+\int_0^t \left\{ \tilde{\rho}_s(b)+\tilde{\rho}_s(\tilde{H} U) \right\}^\top R(s)^{-1} d\mathsf Y_s^{\lambda,\epsilon}.
  \end{align*}
\end{theorem}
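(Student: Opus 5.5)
The plan is to follow the derivation of the usual Zakai equation in \citet{bain_crisan2009filtering}. The key structural facts are that $\tilde Z^{\lambda,\epsilon}$ solves a linear SDE driven by $\mathsf Y^{\lambda,\epsilon}$ alone, and that under $\mathsf Q^{\lambda,\epsilon}$ the pair $(V,\mathsf Y^{\lambda,\epsilon})$ behaves as follows: $V$ is a Brownian motion, and $\mathsf Y^{\lambda,\epsilon}$ is a martingale with quadratic variation $\int_0^\cdot R(s)\,ds$, independent of $V$. This is Girsanov's theorem applied to $\mathsf Z^{\lambda,\epsilon}$, exactly as for $Q^\epsilon$.

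By It\^o's formula, $d\tilde Z_t=\tilde Z_t\tilde H_t^\top R(t)^{-1}d\mathsf Y_t$. The product rule, combined with the decomposition of $U$ (writing $d\mathsf Y_t-\tilde H_tdt$ for the $b$-term), gives
\begin{align*}
  d(U_t\tilde Z_t)
  ={}&\tilde Z_t a_t\,dt-\tilde Z_t b_t^\top R(t)^{-1}\tilde H_t\,dt+\tilde Z_t(b_t+U_t\tilde H_t)^\top R(t)^{-1}d\mathsf Y_t\\
  &+\tilde Z_t c_t^\top dV_t+\tilde Z_t b_t^\top R(t)^{-1}\tilde H_t\,dt .
\end{align*}
The last term is the covariation $d\langle U,\tilde Z\rangle$. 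The two $b^\top R^{-1}\tilde H$ drift terms cancel, and $d\langle V,\mathsf Y\rangle=0$ under $\mathsf Q$. Hence
\begin{align*}
  U_t\tilde Z_t=U_0+\int_0^t\tilde Z_sa_s\,ds+\int_0^t\tilde Z_s(b_s+U_s\tilde H_s)^\top R(s)^{-1}d\mathsf Y_s+\int_0^t\tilde Z_sc_s^\top dV_s .
\end{align*}

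Next, take $E_{\mathsf Q}[\,\cdot\mid\mathscr Y_t]$ at fixed $t$, using \eqref{eq:rho-tilde-conditional-expectation}. The individual terms are handled as follows.
\begin{enumerate}[label=(\roman*)]
\item \emph{Lebesgue integral.} Conditional Fubini gives $E_{\mathsf Q}[\int_0^t\tilde Z_sa_s\,ds\mid\mathscr Y_t]=\int_0^t\tilde\rho_s(a)\,ds$. Here I use that for $s\le t$, $E_{\mathsf Q}[\tilde Z_sa_s\mid\mathscr Y_t]=E_{\mathsf Q}[\tilde Z_sa_s\mid\mathscr Y_s]$, because the increments of $\mathsf Y$ after $s$ are independent of $\mathcal F_s$ under $\mathsf Q$.
\item \emph{$dV$-integral.} It has zero conditional expectation given $\mathscr Y_t$, since $V$ is a $\mathsf Q$-Brownian motion independent of $\mathscr Y_T$. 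This is the standard lemma 3.21 of Bain--Crisan, proved by testing against exponential martingales $\varepsilon_t=\exp(\int_0^t\xi_s^\top d\mathsf Y_s-\tfrac12\int_0^t\xi_s^\top R(s)\xi_s\,ds)$ with bounded deterministic $\xi$, which are total in $L^1(\mathscr Y_t)$.
\item \emph{$d\mathsf Y$-integral.} Its conditional expectation is $\int_0^t\tilde\rho_s(b+U\tilde H)^\top R(s)^{-1}d\mathsf Y_s$, by the companion lemma of Bain--Crisan proved with the same test functions $\varepsilon_t$.
\end{enumerate}
Note that the observation dependence of $U$, $a$, $b$ causes no difficulty, because these lemmas only require progressive measurability with respect to $\mathcal F_t$. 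Finally, the continuous version follows because the right-hand side is continuous in $t$; it agrees with $\tilde\rho_t(U)$ for each fixed $t$, hence a.s. at all rational $t$. Together with optionality, this identifies it as a continuous version.

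The main obstacle is integrability. The lemmas in (ii) and (iii) need the stochastic integrals to be genuine martingales, and need $\int\tilde\rho_s(b+U\tilde H)^\top R^{-1}d\mathsf Y_s$ to be well defined, under the stated hypothesis, which involves only $E_{\mathsf Q}[\int\tilde Z(|a|+|b|^2+|c|^2+(1+|U|^2)|\tilde H|^2)]$. This is weaker than square-integrability of $\tilde Z b$.

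I would first localize with stopping times $\tau_n$ making $\tilde Z$, $U$, $b$, $c$ bounded. Then I would use the identity $E_{\mathsf Q}[\tilde Z_s\,\cdot\,]=E[\,\cdot\,]$ under the reference law from the Proposition following \eqref{eq:def-E-tilde}. Under that law $\mathsf Y$ becomes $\int\tilde H\,ds+\int\sigma\,dW$, so the weighted hypotheses turn into ordinary $L^1$/$L^2$ bounds. These give control of $\int\tilde Z(b+U\tilde H)\,d\mathsf Y$ in $L^1$ via the Burkholder--Davis--Gundy inequality with the $\tilde Z$ weight, as in Bain--Crisan's Theorem 3.24 proof. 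Once that is in place, I would pass to the limit $n\to\infty$ with dominated convergence for the $ds$-terms and $L^1$ convergence for the stochastic integrals.
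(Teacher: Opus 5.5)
Your It\^o computation of $d(U_t\tilde Z_t)$ is correct, as are the cancellation of the two $b^\top R^{-1}\tilde H$ drifts and the reduction to Lemma 3.21 of Bain--Crisan. They are the $\delta=0$ form of the identity the paper writes for $U_tD_t^\delta$. The gap is in the integrability step, which you yourself call the main obstacle: the mechanism you propose for it does not work as described.

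Write $\phi=b+U\tilde H$. The hypothesis weights by $\tilde Z$ only to the first power, but the quadratic variation of $\int\tilde Z\phi^\top R^{-1}d\mathsf Y+\int\tilde Z c^\top dV$ carries $\tilde Z^2$. Rewriting the hypothesis under the reference law only restates it. It gives no bound on $E_{\mathsf Q}[(\int_0^T\tilde Z_s^2(|\phi_s|^2+|c_s|^2)\,ds)^{1/2}]$, which is what BDG needs. Hence it gives no uniform integrability of the stopped values $U_{t\wedge\tau_n}\tilde Z_{t\wedge\tau_n}$, since \eqref{eq:tilted-process-integrability} controls $U\tilde Z$ only at deterministic times.

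Separately, you must identify the limit of $\int_0^tE_{\mathsf Q}[1_{\{s\le\tau_n\}}\tilde Z_s\phi_s\mid\mathscr Y_T]^\top R(s)^{-1}d\mathsf Y_s$ with $\int_0^t\tilde\rho_s(\phi)^\top R(s)^{-1}d\mathsf Y_s$. This needs $\int_0^t\tilde\rho_s(|\phi|)^2ds<\infty$ a.s.\ and a dominated convergence theorem for stochastic integrals. You flag this requirement but give no argument for it. Also, stopping cannot make the merely progressively measurable $b$, $c$ bounded; you would stop on $\int_0^t\tilde Z_s^2(|b_s|^2+|c_s|^2)\,ds$ instead.

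The paper follows the Bain--Crisan argument it cites and regularizes instead of localizing, replacing $\tilde Z$ by $D^\delta=\tilde Z/(1+\delta\tilde Z)$.
\begin{itemize}
\item The elementary bound $(D_s^\delta)^2\le\tilde Z_s/(4\delta)$ makes all integrands square-integrable under exactly the first-power hypothesis, so Lemma 3.21 applies with no further estimate.
\item The left-hand side is dominated by $|U_t|\tilde Z_t$ at the fixed time $t$.
\item The extra drift terms are dominated by $C\tilde Z_s(|a_s|+|U_s||\tilde H_s|^2+|b_s||\tilde H_s|)$.
\item The $d\mathsf Y$-terms converge by Proposition B.41 of Bain--Crisan, since $\tilde\rho_s(|\phi|)^2\le\tilde\rho_s(1)\tilde\rho_s(|\phi|^2)$ and $\sup_{v\le T}\tilde\rho_v(1)<\infty$ a.s.
\end{itemize}

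Your localization route can be repaired in this setting. Since $\tilde Z$ is a true $\mathsf Q$-martingale, $E_{\mathsf Q}[\tilde Z_T\log\tilde Z_T]=\frac12E_{\mathsf Q}[\int_0^T\tilde Z_s\tilde H_s^\top R(s)^{-1}\tilde H_s\,ds]$, which is finite by the $\tilde H$ part of the hypothesis. Doob's $L\log L$ inequality then gives $E_{\mathsf Q}[\sup_{s\le T}\tilde Z_s]<\infty$. Cauchy--Schwarz bounds the BDG quantity by $E_{\mathsf Q}[\sup_{s\le T}\tilde Z_s]^{1/2}E_{\mathsf Q}[\int_0^T\tilde Z_s(|\phi_s|^2+|c_s|^2)\,ds]^{1/2}$. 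However, these two ingredients — the maximal bound on $\tilde Z$ and the Cauchy--Schwarz control of $\tilde\rho(|\phi|)$ — are the real content of the proof, and your proposal does not supply them.
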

\begin{proof}
  First note that $\tilde\rho_t(U)$ can also be written as
  \begin{align*}
    \tilde\rho_t(U)=E_{\mathsf Q^{\lambda,\epsilon}}\!\left[
      U_t\tilde Z_t^{\lambda,\epsilon}
      \,\middle|\,\mathscr Y_T^{\lambda,\epsilon}\right]
  \end{align*}
  for $t\leq T$ (see Proposition 3.15 in \citet{bain_crisan2009filtering}).

  For $\delta>0$, let us write
  \begin{align*}
    D_t^\delta=\frac{\tilde{Z}_t^{\lambda,\epsilon}}{1+\delta \tilde{Z}_t^{\lambda,\epsilon}}.
  \end{align*}
  Then, due to
  \begin{align*}
    d\tilde{Z}_t^{\lambda,\epsilon}=\tilde{Z}_t^{\lambda,\epsilon}(\tilde H_t)^\top R(t)^{-1}d\mathsf Y_t^{\lambda,\epsilon},
  \end{align*}
  we have
  \begin{align*}
    &U_tD_t^\delta-\frac{U_0}{1+\delta}\\
    ={}&\int_0^t D_s^\delta a_s\,ds
    +\int_0^t D_s^\delta b_s^\top R(s)^{-1}d\mathsf Y_s^{\lambda,\epsilon}
    +\int_0^t D_s^\delta c_s^\top dV_s\\
    &+\int_0^t \frac{\tilde{Z}_sU_s}
    {(1+\delta\tilde{Z}_s^{\lambda,\epsilon})^2}
    (\tilde H_s)^\top R(s)^{-1}d\mathsf Y_s^{\lambda,\epsilon}
    -\int_0^t \frac{\delta U_s(\tilde{Z}_s^{\lambda,\epsilon})^2}
    {(1+\delta\tilde{Z}_s^{\lambda,\epsilon})^3}
    (\tilde H_s)^\top R(s)^{-1}\tilde H_s\,ds\\
    &-\int_0^t \frac{\delta(\tilde{Z}_s^{\lambda,\epsilon})^2}
    {(1+\delta\tilde{Z}_s^{\lambda,\epsilon})^2}
    b_s^\top R(s)^{-1} \tilde H_s\,ds.
  \end{align*}
  Now let us take the conditional expectation and consider the limit $\delta\to0$.
  For the left-hand side, $|D_t^\delta|\leq\tilde Z_t^{\lambda,\epsilon}$ and conditional dominated convergence give
  \begin{align*}
    E_{\mathsf Q^{\lambda,\epsilon}}\!\left[
      U_tD_t^\delta
      \,\middle|\,\mathscr Y_T^{\lambda,\epsilon}\right]
    \to E_{\mathsf Q^{\lambda,\epsilon}}\!\left[
      U_t\tilde Z_t^{\lambda,\epsilon}
      \,\middle|\,\mathscr Y_T^{\lambda,\epsilon}\right]
    \quad\text{a.s.}
  \end{align*}
  For the $ds$ terms,
  \begin{align*}
    &|D_s^\delta a_s-\tilde Z_sa_s|
    \leq\tilde Z_s^{\lambda,\epsilon}|a_s|,\\
    &\left|\frac{\delta U_s(\tilde Z_s^{\lambda,\epsilon})^2}
    {(1+\delta\tilde Z_s^{\lambda,\epsilon})^3}
    (\tilde H_s)^\top R(s)^{-1}\tilde H_s\right|
    \leq C\tilde Z_s^{\lambda,\epsilon}|U_s||\tilde H_s|^2,\\
    &\left|\frac{\delta(\tilde Z_s^{\lambda,\epsilon})^2}
    {(1+\delta\tilde Z_s^{\lambda,\epsilon})^2}
    b_s^\top R(s)^{-1}\tilde H_s\right|
    \leq C\tilde Z_s^{\lambda,\epsilon}|b_s||\tilde H_s|.
  \end{align*}
  These bounds are integrable by the assumption, using
  $2|U_s|\leq1+|U_s|^2$ and
  $2|b_s||\tilde H_s|\leq|b_s|^2+|\tilde H_s|^2$.
  Dominated convergence and conditional Fubini therefore yield
  \begin{align*}
    &E_{\mathsf Q^{\lambda,\epsilon}}\!\left[
    \int_0^t D_s^\delta a_s\,ds
    \,\middle|\,\mathscr Y_T^{\lambda,\epsilon}\right]
    \to\int_0^t\tilde\rho_s(a)\,ds,\\
    &E_{\mathsf Q^{\lambda,\epsilon}}\!\left[
    \int_0^t\frac{\delta U_s(\tilde Z_s^{\lambda,\epsilon})^2}
    {(1+\delta\tilde Z_s^{\lambda,\epsilon})^3}
    (\tilde H_s)^\top R(s)^{-1}\tilde H_s\,ds
    \,\middle|\,\mathscr Y_T^{\lambda,\epsilon}\right]\to0,\\
    &E_{\mathsf Q^{\lambda,\epsilon}}\!\left[
    \int_0^t\frac{\delta(\tilde Z_s^{\lambda,\epsilon})^2}
    {(1+\delta\tilde Z_s^{\lambda,\epsilon})^2}
    b_s^\top R(s)^{-1}\tilde H_s\,ds
    \,\middle|\,\mathscr Y_T^{\lambda,\epsilon}\right]
    \to0
  \end{align*}
  in $L^1(\mathsf Q^{\lambda,\epsilon})$.

  For the $dV_s$ term, due to
  \begin{align*}
    E_{\mathsf Q^{\lambda,\epsilon}}\left[ \int_0^t (D_s^\delta)^2 |c_s|^2 ds \right]\leq \frac{1}{4\delta}E_{\mathsf Q^{\lambda,\epsilon}}\left[ \int_0^t \tilde Z_s^{\lambda,\epsilon} |c_s|^2 ds \right]<\infty,
  \end{align*}
  Lemma 3.21 of \citet{bain_crisan2009filtering} yields
  \begin{align*}
    E_{\mathsf Q^{\lambda,\epsilon}}\!\left[
    \int_0^t D_s^\delta c_s^\top dV_s
    \,\middle|\,\mathscr Y_T^{\lambda,\epsilon}\right]=0.
  \end{align*}

  Finally, for the $d\mathsf Y_s^{\lambda,\epsilon}$ terms, since we have
  \begin{align*}
    &E_{\mathsf Q^{\lambda,\epsilon}}\left[ \int_0^t|D_s^\delta b_s^\top|^2ds \right]\leq \frac{1}{4\delta}E_{\mathsf Q^{\lambda,\epsilon}}\left[ \int_0^t\tilde Z_s^{\lambda,\epsilon}|b_s|^2ds \right]<\infty,\\
    &E_{\mathsf Q^{\lambda,\epsilon}}\left[ \int_0^t\left|\frac{\tilde Z_sU_s}
    {(1+\delta\tilde Z_s^{\lambda,\epsilon})^2}
    \tilde H_s\right|^2 ds \right]\leq \frac{1}{4\delta}E_{\mathsf Q^{\lambda,\epsilon}}\left[ \int_0^t\tilde Z_s^{\lambda,\epsilon}|U_s\tilde H_s|^2ds \right]<\infty,
  \end{align*}
  Lemma 3.21 in \citet{bain_crisan2009filtering} gives
  \begin{align}
    &E_{\mathsf Q^{\lambda,\epsilon}}\!\left[
    \int_0^t D_s^\delta b_s^\top R(s)^{-1}d\mathsf Y_s^{\lambda,\epsilon}
    \,\middle|\,\mathscr Y_T^{\lambda,\epsilon}\right]
    =\int_0^tE_{\mathsf Q^{\lambda,\epsilon}}\!\left[
     D_s^\delta b_s^\top R(s)^{-1}
    \,\middle|\,\mathscr Y_T^{\lambda,\epsilon}\right]d\mathsf Y_s^{\lambda,\epsilon},\label{eq:Zakai1}\\
    \begin{split}
      &E_{\mathsf Q^{\lambda,\epsilon}}\!\left[
    \int_0^t\frac{\tilde Z_sU_s}
    {(1+\delta\tilde Z_s^{\lambda,\epsilon})^2}
    (\tilde H_s)^\top R(s)^{-1}d\mathsf Y_s^{\lambda,\epsilon}
    \,\middle|\,\mathscr Y_T^{\lambda,\epsilon}\right]\\
    =&\int_0^tE_{\mathsf Q^{\lambda,\epsilon}}\!\left[
    \frac{\tilde Z_sU_s}
    {(1+\delta\tilde Z_s^{\lambda,\epsilon})^2}
    (\tilde H_s)^\top R(s)^{-1}
    \,\middle|\,\mathscr Y_T^{\lambda,\epsilon}\right]d\mathsf Y_s^{\lambda,\epsilon}.
    \end{split}\label{eq:Zakai2}
  \end{align}

  Furthermore, we have
  \begin{align*}
    &|D_s^\delta b_s-\tilde Z_sb_s|
    \leq\tilde Z_s^{\lambda,\epsilon}|b_s|,\\
    &\left|\frac{\tilde Z_sU_s}
    {(1+\delta\tilde Z_s^{\lambda,\epsilon})^2}\tilde H_s
    -\tilde Z_sU_s\tilde H_s\right|
    \leq\tilde Z_s^{\lambda,\epsilon}|U_s\tilde H_s|,
  \end{align*}
  and the assumption gives
  \begin{align*}
    &\int_0^t\tilde\rho_s(|b|)^2ds
    \leq\sup_{0\leq v\leq T}\tilde\rho_v(1)
      \int_0^t\tilde\rho_s(|b|^2)ds<\infty,\\
   &\int_0^t\tilde\rho_s(|U\tilde H|)^2ds
    \leq\sup_{0\leq v\leq T}\tilde\rho_v(1)
      \int_0^t\tilde\rho_s(|U\tilde H|^2)ds<\infty
  \end{align*}
  almost surely. Here we take a c\`adl\`ag version of the martingale
  $\tilde\rho_t(1)=E_{\mathsf Q^{\lambda,\epsilon}}[\tilde Z_T^{\lambda,\epsilon}\mid\mathscr Y_t^{\lambda,\epsilon}]$.
  Therefore, Proposition B.41 of \citet{bain_crisan2009filtering} shows that \eqref{eq:Zakai1} and \eqref{eq:Zakai2} converge to
  \begin{align*}
    &\int_0^t\tilde\rho_s(b)^\top R(s)^{-1}d\mathsf Y_s^{\lambda,\epsilon},\quad
    \int_0^t\tilde\rho_s(U\tilde H)^\top R(s)^{-1}d\mathsf Y_s^{\lambda,\epsilon}
  \end{align*}
  in probability, respectively. This completes the proof.
\end{proof}

\begin{theorem}[Generalized Kushner--Stratonovich equation]
\label{theorem:generalized-KS}
  Define the auxiliary innovation by
  \begin{align*}
    \nu_t
    =\mathsf Y_t^{\lambda,\epsilon}
      -\int_0^t\tilde\pi_s(\tilde H)\,ds.
  \end{align*}
  Let $\{U_t\}_{t\geq0}$ be a real-valued continuous semimartingale satisfying \eqref{eq:tilted-process-integrability} and
  \begin{align*}
    dU_t=a_t\,dt+b_t^\top R(t)^{-1}
      \bigl(d\mathsf Y_t^{\lambda,\epsilon}-\tilde H_t\,dt\bigr)
      +c_t^\top dV_t,
  \end{align*}
  where $a$, $b$ and $c$ are progressively measurable $\mathbb{R}$, $\mathbb{R}^{d_2}$ and $\mathbb{R}^{m_1}$-valued processes such that
  \begin{align*}
    E_{\mathsf Q^{\lambda,\epsilon}}\!\left[
    \int_0^T\tilde Z_s^{\lambda,\epsilon}
    \left\{|a_s|+|b_s|^2+|c_s|^2
    +(1+|U_s|^2)|\tilde H_s|^2\right\}ds
    \right]<\infty.
  \end{align*}
  Then $\tilde\pi(U)$ admits a continuous version satisfying, for $0\leq t\leq T$,
  \begin{align}
    d\tilde\pi_t(U)
    ={}&\tilde\pi_t(a)\,dt+\bigl\{\tilde\pi_t(b)
      +\tilde\pi_t(U\tilde H)
      -\tilde\pi_t(U)\tilde\pi_t(\tilde H)\bigr\}^{\top}
      R(t)^{-1}d\nu_t.
    \label{eq:generalized-KS}
  \end{align}
\end{theorem}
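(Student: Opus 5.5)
The plan is to derive the normalized equation from the Zakai equation (Theorem~\ref{theorem:Zakai}) by It\^o's quotient rule, exactly as in the classical derivation of the Kushner--Stratonovich equation from the Zakai equation in \citet{bain_crisan2009filtering}. I will apply Theorem~\ref{theorem:Zakai} twice. First, with $U\equiv1$, where $a=b=c=0$, the integrability hypothesis reduces to the assumed bound on $\tilde Z|\tilde H|^2$. This gives the continuous version
\begin{align*}
  d\tilde\rho_t(1)=\tilde\rho_t(\tilde H)^\top R(t)^{-1}d\mathsf Y_t^{\lambda,\epsilon}.
\end{align*}
Second, for the given $U$, it gives
\begin{align*}
  d\tilde\rho_t(U)=\tilde\rho_t(a)\,dt+\{\tilde\rho_t(b)+\tilde\rho_t(\tilde HU)\}^\top R(t)^{-1}d\mathsf Y_t^{\lambda,\epsilon}.
\end{align*}
Both are continuous $\mathscr Y^{\lambda,\epsilon}$-semimartingales driven by $\mathsf Y^{\lambda,\epsilon}$. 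Under $\mathsf Q^{\lambda,\epsilon}$, $\mathsf Y^{\lambda,\epsilon}$ is a martingale with $d[\mathsf Y^{\lambda,\epsilon}]_t=R(t)\,dt$, since it is $\int\sigma\,dW$ plus a drift removed by the change of measure.

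Next I will establish strict positivity of the continuous version of $\tilde\rho(1)$ on $[0,T]$, uniformly in time almost surely. Write $\tilde\rho_t(1)=1+\int_0^t\tilde\rho_s(1)\tilde\pi_s(\tilde H)^\top R^{-1}d\mathsf Y_s$ on the set where it is positive. Then $\tilde\rho(1)$ is the stochastic exponential
\begin{align*}
  \exp\Bigl(\int_0^t\tilde\pi_s(\tilde H)^\top R^{-1}d\mathsf Y_s-\tfrac12\int_0^t\tilde\pi_s(\tilde H)^\top R^{-1}\tilde\pi_s(\tilde H)\,ds\Bigr),
\end{align*}
provided $\int_0^T|\tilde\pi_s(\tilde H)|^2ds<\infty$ a.s. That finiteness follows from conditional Jensen, $|\tilde\rho_s(\tilde H)|^2\leq\tilde\rho_s(1)\tilde\rho_s(|\tilde H|^2)$, together with $\sup_s\tilde\rho_s(1)<\infty$ and the integrability assumption. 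This step is the main technical obstacle. The clean route is the standard stopping argument: define $\tau_n=\inf\{t:\tilde\rho_t(1)\leq1/n\}$, apply It\^o on $[0,\tau_n]$, identify $\tilde\rho(1)$ with the exponential there, and conclude that $\tau_n\to\infty$ (beyond $T$) because the exponential is bounded away from zero on $[0,T]$. It must also be checked that the fixed-time identity \eqref{eq:pi-tilde-E-tilde} is compatible with using continuous versions throughout; this follows since both sides are continuous and agree at each fixed $t$ almost surely.

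Finally, I will apply It\^o's formula to $\tilde\rho_t(U)\cdot\tilde\rho_t(1)^{-1}$ up to $T$. Since $x\mapsto1/x$ is smooth on $(0,\infty)$, we get
\begin{align*}
  d\tilde\rho_t(1)^{-1}=-\tilde\rho_t(1)^{-1}\tilde\pi_t(\tilde H)^\top R^{-1}d\mathsf Y_t+\tilde\rho_t(1)^{-1}\tilde\pi_t(\tilde H)^\top R^{-1}\tilde\pi_t(\tilde H)\,dt.
\end{align*}
The product rule then yields the following $dt$ terms: $\tilde\pi_t(a)$; the term $\tilde\pi_t(U)\tilde\pi_t(\tilde H)^\top R^{-1}\tilde\pi_t(\tilde H)$; and the cross-variation $-\{\tilde\pi_t(b)+\tilde\pi_t(\tilde HU)\}^\top R^{-1}\tilde\pi_t(\tilde H)$. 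The $d\mathsf Y$ terms are $\{\tilde\pi_t(b)+\tilde\pi_t(\tilde HU)-\tilde\pi_t(U)\tilde\pi_t(\tilde H)\}^\top R^{-1}d\mathsf Y_t$. The two $dt$ terms besides $\tilde\pi_t(a)$ combine to exactly $-\{\cdots\}^\top R^{-1}\tilde\pi_t(\tilde H)\,dt$. Substituting $d\nu_t=d\mathsf Y_t-\tilde\pi_t(\tilde H)dt$ then gives \eqref{eq:generalized-KS}.

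For this to be legitimate, the stochastic integrals with respect to $\nu$ must be well defined. This requires $\int_0^T|\tilde\pi_s(b)|^2+|\tilde\pi_s(U\tilde H)|^2+|\tilde\pi_s(U)\tilde\pi_s(\tilde H)|^2\,ds<\infty$ a.s. The first two follow from the Jensen bounds already used in the proof of Theorem~\ref{theorem:Zakai}, divided by $\inf_{[0,T]}\tilde\rho(1)>0$. For the third I will use $|\tilde\pi_s(U)|^2\leq\tilde\pi_s(U^2)$ and bound the product by Cauchy--Schwarz against the assumption $E[\int\tilde Z(1+|U|^2)|\tilde H|^2]<\infty$. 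Continuity of $\tilde\pi(U)$ is then inherited from the continuous versions of the numerator and denominator.
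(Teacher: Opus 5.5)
You follow the paper's route. The paper also obtains the exponential formula for $\tilde\rho_t(1)$ (via the argument of Lemma~3.29 in \citet{bain_crisan2009filtering}), differentiates $\tilde\rho_t(1)^{-1}$, and combines this with Theorem~\ref{theorem:Zakai} through It\^o's product rule. Your bookkeeping of the $dt$, cross-variation and $d\mathsf Y^{\lambda,\epsilon}$ terms agrees with it.

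The gap is in the step you yourself call the main obstacle, and as written it is circular. Conditional Jensen gives $|\tilde\pi_s(\tilde H)|^2=|\tilde\rho_s(\tilde H)|^2/\tilde\rho_s(1)^2\le\tilde\rho_s(|\tilde H|^2)/\tilde\rho_s(1)$. Controlling $\int_0^T|\tilde\pi_s(\tilde H)|^2ds$ this way therefore needs a \emph{lower} bound on $\tilde\rho(1)$. The upper bound $\sup_s\tilde\rho_s(1)<\infty$ only controls $\int_0^T|\tilde\rho_s(\tilde H)|^2ds$. Your stopping argument then deduces $\tau_n\to\infty$ from the exponential being bounded away from zero on $[0,T]$, which presupposes exactly that finiteness. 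Stopping alone only gives $\int_0^{\tau_n}|\tilde\pi_s(\tilde H)|^2ds\le n\int_0^T\tilde\rho_s(|\tilde H|^2)ds$, a bound that degenerates as $n\to\infty$. On the event $\{\lim_n\tau_n\le T\}$ the exponent must tend to $-\infty$, which forces $\int_0^{\lim_n\tau_n}|\tilde\pi_s(\tilde H)|^2ds=\infty$. Nothing in your argument excludes this event.

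What is missing is an independent reason why $\tilde\rho(1)$ cannot reach zero. The shortest is the martingale property already used in the proof of Theorem~\ref{theorem:Zakai}. The process $\tilde\rho_t(1)=E_{\mathsf Q^{\lambda,\epsilon}}[\tilde Z_T^{\lambda,\epsilon}\mid\mathscr Y_t^{\lambda,\epsilon}]$ is a nonnegative martingale, its continuous version is indistinguishable from the c\`adl\`ag one, and $\tilde\rho_T(1)>0$ a.s. because $\tilde Z_T^{\lambda,\epsilon}>0$. A nonnegative supermartingale that hits zero stays at zero, so $\tilde\rho(1)$ never vanishes on $[0,T]$, and by continuity $\inf_{[0,T]}\tilde\rho(1)>0$ a.s. Your Jensen bound then gives $\int_0^T|\tilde\pi_s(\tilde H)|^2ds<\infty$, and the exponential formula follows from It\^o's formula for $\log\tilde\rho(1)$ with no stopping at all.

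Alternatively, let $\tilde P$ be the probability measure with $d\tilde P=\tilde Z_T^{\lambda,\epsilon}\,d\mathsf Q^{\lambda,\epsilon}$. Under $\tilde P$, $\tilde\pi_s(\tilde H)$ is a genuine conditional expectation, so $E_{\tilde P}\int_0^T|\tilde\pi_s(\tilde H)|^2ds\le E_{\mathsf Q^{\lambda,\epsilon}}\int_0^T\tilde Z_s^{\lambda,\epsilon}|\tilde H_s|^2ds<\infty$. This does close your stopping argument.

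A smaller point concerns the integrand $\tilde\pi_s(U)\tilde\pi_s(\tilde H)$. Your Cauchy--Schwarz justification does not work as stated, because a product of conditional expectations is not dominated by $\tilde\pi_s(|U|^2|\tilde H|^2)$. It is also unnecessary: once $\inf_{[0,T]}\tilde\rho(1)>0$, the process $\tilde\pi(U)$ is continuous and hence bounded on $[0,T]$.
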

\begin{proof}
  In the same way as Lemma 3.29 in \citet{bain_crisan2009filtering}, one can show that
  \begin{align*}
    \tilde{\rho}_t(1)=\exp\left( \int_0^t\tilde{\pi}_s(\tilde{H}_s)^\top R(s)^{-1}d\mathsf Y_s^{\lambda,\epsilon}-\frac{1}{2}\int_0^t \tilde{\pi}_s(\tilde{H}_s)^\top R(s)^{-1} \tilde{\pi}_s(\tilde{H}_s)ds \right),
  \end{align*}
  and therefore
  \begin{align*}
    d\frac{1}{\tilde{\rho}_t(1)}=\frac{1}{\tilde{\rho}_t(1)}\left\{ -\tilde{\pi}_t(\tilde{H}_t)^\top R(t)^{-1}d\mathsf Y_t^{\lambda,\epsilon}+\tilde{\pi}_t(\tilde{H}_t)^\top R(t)^{-1} \tilde{\pi}_t(\tilde{H}_t)dt \right\}.
  \end{align*}
  Combined with Theorem \ref{theorem:Zakai}, It\^o's formula yields
  \begin{align*}
    d\tilde{\pi}_t(U)=&d\tilde{\rho}_t(U)\frac{1}{\tilde{\rho}_t(1)}+\tilde{\rho}_t(U)d\frac{1}{\tilde{\rho}_t(1)}+d\left\langle \tilde{\rho}(U),\frac{1}{\tilde{\rho}(1)} \right\rangle_t\\
    =&\left\{ \tilde{\rho}_t(a)dt+\bigl(\tilde{\rho}_t(b)+\tilde{\rho}_t(\tilde{H} U)\bigr)^\top R(t)^{-1}d\mathsf Y_t^{\lambda,\epsilon} \right\}\frac{1}{\tilde{\rho}_t(1)}\\
    &+\frac{\tilde{\rho}_t(U)}{\tilde{\rho}_t(1)}\left\{ -\tilde{\pi}_t(\tilde{H}_t)^\top R(t)^{-1}d\mathsf Y_t^{\lambda,\epsilon}+\tilde{\pi}_t(\tilde{H}_t)^\top R(t)^{-1} \tilde{\pi}_t(\tilde{H}_t)dt \right\}\\
    &-\frac{1}{\tilde{\rho}_t(1)}\bigl(\tilde{\rho}_t(b)+\tilde{\rho}_t(\tilde{H} U)\bigr)^\top R(t)^{-1}\tilde{\pi}_t(\tilde{H}_t)dt\\
    =&\tilde{\pi}_t(a)dt+\bigl\{\tilde\pi_t(b)
      +\tilde\pi_t(U\tilde H)
      -\tilde\pi_t(U)\tilde\pi_t(\tilde H)\bigr\}^{\top}
      R(t)^{-1}d\nu_t.
  \end{align*}
\end{proof}

\section{Evolution of the coefficients}
\label{section:coefficient-evolution}
In this section, we study the evolution of $K_t(u,z)$ defined in \eqref{eq:def-generating-K} by applying Theorem \ref{theorem:generalized-KS}. The dependence on $u$ and $z$ is polynomial, so applying $\tilde{\pi}_t$ coefficientwise to the polynomial inside the tilted expectation gives the following representation, valid simultaneously for all $u$ and $z$:
\begin{align}
  K_t(u,z)=\sum_{p=1}^n\frac{(-1)^{p-1}}p
    \left\{\tilde{\pi}_t\left(
    \sum_{\ell=0}^n\frac1{\ell!}
    \left(iu\cdot\mathbb{X}(z)\right)^\ell
    \mathbb{J}(z)
    \right)-1\right\}^p.\label{eq:def-K-pi}
\end{align}

In what follows, $\equiv$ denotes congruence modulo $z^{n}$ unless otherwise stated. First, let us state the evolution of $\mathbb{X}$ and $\mathbb{J}(z)$.
\begin{lemma}\label{lemma:derivatives-of-X-J}
  The following identities hold.\\
  (1) 
  \begin{align}
\begin{split}
  d\mathbb X_t(z)\equiv{}&
  \sum_{\substack{q,j\geq0\\1\leq q+j\leq n}}
    \frac{z^q}{q!j!}a_t^{[j,q]}
    [\mathbb X_t(z)^{\otimes j}]dt
    +\sum_{q=1}^n\sum_{j=0}^{n-q}
    \frac{z^q}{q!j!}b_t^{[j,q]}
    [\mathbb X_t(z)^{\otimes j}]dV_t.
\end{split}\label{eq:evolution-bb-X}
\end{align}
  (2) \begin{align*}
    \tilde{H}_t+\mathbb{H}_t(z)\equiv H_t^{[0]}+\frac{\lambda}{z}\sum_{\substack{q,j\geq0\\1\leq q+j\leq n}}\frac{z^q}{q!j!}h_t^{[j,q]}[\mathbb{X}_t(z)^{\otimes j}].
  \end{align*}
  (3)
  \begin{align}
    d \mathbb{J}_t(z)\equiv \mathbb{J}_t(z)\mathbb{H}_{t}(z)^\top R({t})^{-1}(d\mathsf Y_t-\tilde{H}_t dt).\label{eq:dJ-bb}
  \end{align}
\end{lemma}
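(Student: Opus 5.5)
The plan is to prove all three identities by comparing Taylor coefficients in $z$. Each side is a polynomial in $z$ whose coefficients are processes, so congruence modulo $z^{n}$ means equality of the coefficients of $z^p$ for $0\le p\le n-1$. Differentials are taken coefficientwise.

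\emph{Step 1: a Faà di Bruno identity.} First I will record the purely algebraic identity that, for a symmetric $j$-linear map $T$,
\begin{align*}
  \frac{1}{j!}T[\mathbb X_t(z)^{\otimes j}]
  =\sum_{k\geq j}\frac{z^k}{k!}\sum_{(i_1,\ldots,i_j)\in\Lambda(k,j)}\nu(k,(i_1,\ldots,i_j))\,T[X_t^{[i_1]}\otimes\cdots\otimes X_t^{[i_j]}].
\end{align*}
This follows by expanding $(\sum_r z^rX_t^{[r]}/r!)^{\otimes j}$ multinomially and using symmetry of $T$. The multiplicities work out because
\begin{align*}
  \frac{k!}{j!\,i_1!\cdots i_j!}\times\#\{\text{orderings of the multiset }(i_1,\ldots,i_j)\}=\nu(k,(i_1,\ldots,i_j)).
\end{align*}
I will apply it with $T=a_t^{[j,q]}$, $b_t^{[j,q]}$ and $h_t^{[j,q]}$, all of which are symmetric in their $j$ tensor slots. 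The case $j=0$ contributes only $k=0$, consistently with the conventions $\Lambda(0,0)=\{()\}$ and $\Lambda(k,0)=\varnothing$.

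\emph{Step 2: identity (1).} Multiply by $z^q/q!$ and collect the coefficient of $z^p$. The factor $\frac{z^q}{q!}\frac{z^{p-q}}{(p-q)!}=\frac{z^p}{p!}\binom pq$ appears, so the coefficient of $z^p/p!$ in the right-hand side of \eqref{eq:evolution-bb-X} is exactly the drift and diffusion of \eqref{eq-Xp}, i.e.\ of $dX_t^{[p]}$.

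I would check three points against \eqref{eq-Xp}. The term $q=0$, $j=1$ gives the linear part $a_t^{[1,0]}[X_t^{[p]}]$. The diffusion has no $q=0$ term, since $b(x,0)=0$ forces $b_t^{[j,0]}=0$. The pair $q=j=0$ is excluded on the drift side because $X^{[0]}$ is not part of $\mathbb X(z)$. The truncations $q+j\le n$ only discard terms of order at least $z^{n+1}$. Since $\mathbb X_t(z)$ has coefficient $X_t^{[p]}/p!$ at $z^p$, the identity holds even modulo $z^{n+1}$, hence modulo $z^n$.

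\emph{Step 3: identity (2).} By definition,
\begin{align*}
  \tilde H_t+\mathbb H_t(z)=H_t^{[0]}+\frac{\lambda}{z}\sum_{i=1}^{n+1}H_t^{[i]}z^i .
\end{align*}
The same Faà di Bruno computation applied to \eqref{eq:Hk} shows
\begin{align*}
  \sum_{i=1}^{n}H_t^{[i]}z^i\equiv\sum_{1\le q+j\le n}\frac{z^q}{q!j!}h_t^{[j,q]}[\mathbb X_t(z)^{\otimes j}]\pmod{z^{n+1}}.
\end{align*}
Both remaining discrepancies divide by $z$ to give terms of order at least $z^n$: the discarded $i=n+1$ term, and the higher-order terms dropped in the last congruence. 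This is precisely why the lemma is stated modulo $z^n$ rather than $z^{n+1}$.

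\emph{Step 4: identity (3).} Write $\mathbb J_t(z)=1+\sum_{r=1}^n\mathcal I_t^{r}(z)$, where $\mathcal I^r$ is the $r$-fold iterated integral in \eqref{eq:def-J-bb}. Then $d\mathcal I^r_t=\mathcal I^{r-1}_t\,dM_t(z)$ with $dM_t(z)=\mathbb H_t(z)^\top R(t)^{-1}(d\mathsf Y_t-\tilde H_tdt)$. Hence
\begin{align*}
  d\mathbb J_t(z)=\mathbb J_t(z)\,dM_t(z)-\mathcal I^n_t(z)\,dM_t(z).
\end{align*}
Since $\mathbb H(z)$ is divisible by $z$, $\mathcal I^n_t(z)\,dM_t(z)$ is divisible by $z^{n+1}$. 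This is essentially the computation already used in Lemma~\ref{lemma-J-bb-expression}.

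The only genuinely delicate point is the combinatorial bookkeeping in Step 1: matching the factorials with $\nu$ and the binomial with $\binom pq$, and keeping the $q=0$ and $j=0$ conventions straight. Everything else is direct coefficient comparison.
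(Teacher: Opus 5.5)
Your proposal is correct and takes essentially the same route as the paper: for (1) and (2) you compare coefficients in $z$, using the relation between $\nu(m,(i_1,\ldots,i_j))$ and the number of orderings of $(i_1,\ldots,i_j)$, and for (3) you differentiate the truncated iterated integrals, noting that the discarded top term is divisible by $z^{n+1}$. One small precision: because $\mathbb X_t(z)$ is truncated at $z^n$, your Step 1 identity holds only for the coefficients of order at most $n$, but that is all you use.
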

\begin{proof}
  (1) By \eqref{eq:def-bb-X}, the coefficient of $z^k$ in $d\mathbb{X}_t(z)$ ($1\leq k \leq n$) is
  \begin{align*}
    \frac{1}{k!}dX_t^{[k]},
  \end{align*}
  where $dX_t^{[k]}$ is given by \eqref{eq:X1} for $k=1$ and by \eqref{eq-Xp} for $k\geq2$.

  On the other hand, the coefficient of $z^k$ in the right-hand side of \eqref{eq:evolution-bb-X} is
  \begin{align*}
  &\sum_{q=0}^k\sum_{j=0}^{k-q}
    \sum_{\substack{i_1,\ldots,i_j\geq1\\i_1+\cdots+i_j=k-q}}
    \frac{a_t^{[j,q]}
    [X_t^{[i_1]}\otimes\cdots\otimes X_t^{[i_j]}]}{q!j!i_1!\cdots i_j!}dt\\
  &+\sum_{q=1}^k\sum_{j=0}^{k-q}
    \sum_{\substack{i_1,\ldots,i_j\geq1\\i_1+\cdots+i_j=k-q}}
    \frac{b_t^{[j,q]}
    [X_t^{[i_1]}\otimes\cdots\otimes X_t^{[i_j]}]}{q!j!i_1!\cdots i_j!}dV_t.
\end{align*}
  The number of distinct permutations of $(i_1,\cdots,i_j)\in\Lambda(m,j)$ is
  \begin{align*}
    \frac{\nu(m,(i_1,\cdots,i_j))j!i_1!\cdots i_j!}{m!}.
  \end{align*}
  Using this identity and the symmetry of the derivatives, comparison with \eqref{eq:X1} and \eqref{eq-Xp} gives \eqref{eq:evolution-bb-X}.\\
  (2) By \eqref{eq:def-H-bb} and \eqref{eq:def-H-tilde}, the coefficient of $z^k$ on the left-hand side is
  \begin{align*}
    \tilde{H}_t
  \end{align*} 
  for $k=0$ and $\lambda H_t^{[k+1]}$ for $k\geq 1$. Thus the desired identity can be shown in the same way as (1).\\
  (3) Differentiating \eqref{eq:def-J-bb} yields
  \begin{align*}
    d\mathbb{J}_t(z)=&\Biggl[1+\sum_{r=1}^{n-1}\int_0^t\int_0^{t_r}\cdots\int_0^{t_2}
      \mathbb{H}_{t_1}(z)^\top R({t_1})^{-1}
      \{d\mathsf Y_{t_1}-\tilde{H}_{t_1}\,dt_1\}\\
    &\times\cdots\times
      \mathbb{H}_{t_r}(z)^\top R({t_r})^{-1}
      \{d\mathsf Y_{t_r}-\tilde{H}_{t_r}\,dt_r\}\Biggr]
       \mathbb{H}_{t}(z)^\top R({t})^{-1}(d\mathsf Y_t-\tilde{H}_t dt)\\
      \equiv &\mathbb{J}_t(z)\mathbb{H}_{t}(z)^\top R({t})^{-1}(d\mathsf Y_t-\tilde{H}_t dt).
  \end{align*}
\end{proof}

Let us introduce the notation
\begin{align}
    &\mathbb{F}_t(u,z)=\sum_{\ell=0}^n\frac1{\ell!}\label{eq:def-F-bb}
    \left(iu\cdot\mathbb{X}_t(z)\right)^\ell,\\
    &\mathbb{G}_t(u,z)=\mathbb{F}_t(u,z)\mathbb{J}_t(z),\label{eq:def-G-bb}\\
    &\mathbb{L}_t(u,z)=\sum_{p=0}^{n-1}(-1)^{p}
    \left\{\tilde{\pi}_t\left(
    \mathbb{G}(u,z)
    \right)-1\right\}^p,\label{eq:def-L-bb}
  \end{align}
  and write
  \begin{align*}
    \theta_t^{u,z}(U)=\mathbb{L}_t(u,z)\tilde{\pi}_t(\mathbb{G}(u,z)U)
  \end{align*}
  for a measurable tensor-valued process $U$ satisfying
  \begin{align*}
    E_{\mathsf Q^{\lambda,\epsilon}}\left[ |U_t||\mathbb{G}_t(u,z)|\tilde{Z}_t^{\lambda,\epsilon} \right]<\infty
  \end{align*}
  for every $u \in \mathbb{R}^{d_1}$, $z \in \mathbb{R}$ and $0\leq t\leq T$.

\begin{proposition}\label{prop:evolution-of-K}
  \begin{align*}
    dK_t(u,z)\equiv{}&
    \left\{iu\cdot\theta_t^{u,z}(\mathbb A(z))
    -\frac12u^\top\theta_t^{u,z}
      (\mathbb B(z)\mathbb B(z)^\top)u\right\}dt\\
    &-\frac12R(t)^{-1}\left[\left\{\theta_t^{u,z}(\tilde H+\mathbb H(z))
      -\tilde\pi_t(\tilde H)\right\}^{\otimes 2}\right]dt\\
    &+\left\{\theta_t^{u,z}(\tilde H+\mathbb H(z))
      -\tilde\pi_t(\tilde H)\right\}^{\top}
      R(t)^{-1}d\nu_t,
  \end{align*}
  where
  \begin{align}
    &\mathbb{A}_t(z)=\sum_{\substack{q,j\geq0\\1\leq q+j\leq n}}\frac{z^q}{q!j!}a_t^{[j,q]}[\mathbb{X}_t(z)^{\otimes j}],\label{eq:def-A-bb}\\
    &\mathbb{B}_t(z)=\sum_{q=1}^n\sum_{j=0}^{n-q}\frac{z^q}{q!j!}b_t^{[j,q]}[\mathbb{X}_t(z)^{\otimes j}],\label{eq:def-B-bb}
  \end{align}
  and
  \begin{align*}
    R(t)^{-1}[v^{\otimes 2}]=v^\top R(t)^{-1} v
  \end{align*}
  for any $v \in \mathbb{R}^{d_2}$.
\end{proposition}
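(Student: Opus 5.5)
The plan is to apply the generalized Kushner--Stratonovich equation (Theorem \ref{theorem:generalized-KS}) coefficientwise in $z$ and $u$ to $U_t=\mathbb G_t(u,z)=\mathbb F_t(u,z)\mathbb J_t(z)$. The result is then fed into the power series \eqref{eq:def-K-pi} via It\^o's formula. Throughout, we work modulo $z^n$. Since $\mathbb X(z)$, $\mathbb H(z)$ and $\tilde\pi_t(\mathbb G)-1$ are all divisible by $z$, every truncation of a series in $\ell$ or $p$ is harmless modulo $z^{n}$.

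\textbf{Step 1: the semimartingale decomposition of $\mathbb G_t$.} By Lemma \ref{lemma:derivatives-of-X-J}(1) and definitions \eqref{eq:def-A-bb}, \eqref{eq:def-B-bb}, we have $d\mathbb X_t\equiv\mathbb A_t\,dt+\mathbb B_t\,dV_t$. Since $\mathbb F$ is the truncated exponential of $iu\cdot\mathbb X$, It\^o's formula gives
\begin{align*}
d\mathbb F_t\equiv\mathbb F_t\Bigl\{iu\cdot\mathbb A_t-\tfrac12u^\top\mathbb B_t\mathbb B_t^\top u\Bigr\}dt+\mathbb F_t\,iu^\top\mathbb B_t\,dV_t .
\end{align*}
By Lemma \ref{lemma:derivatives-of-X-J}(3), $d\mathbb J_t\equiv\mathbb J_t\mathbb H_t^\top R^{-1}(d\mathsf Y_t-\tilde H_tdt)$. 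The covariation $d\langle\mathbb F,\mathbb J\rangle$ vanishes because $V$ and $\mathsf Y$ (equivalently $W$ under $\mathsf Q$) are independent. Hence $\mathbb G$ has the form required by Theorem \ref{theorem:generalized-KS}, with
\begin{align*}
a=\mathbb G\{iu\cdot\mathbb A-\tfrac12u^\top\mathbb B\mathbb B^\top u\},\qquad b=\mathbb G\,\mathbb H,\qquad c=i\mathbb G\,\mathbb B^\top u .
\end{align*}
These are complex valued, so we apply the theorem to real and imaginary parts and to each polynomial coefficient in $(u,z)$. The integrability hypotheses follow from the Girsanov transfer result stated after \eqref{eq:def-E-tilde}: the coefficients are polynomials in the $X^{[k]}$ and in iterated observation integrals, which have all moments under the reference measure.

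\textbf{Step 2: the filtering equation for $\tilde\pi_t(\mathbb G)$.} Applying Theorem \ref{theorem:generalized-KS} gives
\begin{align*}
d\tilde\pi_t(\mathbb G)\equiv\tilde\pi_t(a)\,dt+\bigl\{\tilde\pi_t(\mathbb G(\tilde H+\mathbb H))-\tilde\pi_t(\mathbb G)\tilde\pi_t(\tilde H)\bigr\}^\top R^{-1}d\nu_t .
\end{align*}

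\textbf{Step 3: It\^o's formula for $K$.} Write $K=\psi(\tilde\pi(\mathbb G))$, where $\psi(x)=\sum_{p=1}^n\frac{(-1)^{p-1}}{p}(x-1)^p$. Then
\begin{align*}
\psi'(x)=\sum_{p=0}^{n-1}(-1)^p(x-1)^p,
\end{align*}
so $\psi'(\tilde\pi_t(\mathbb G))=\mathbb L_t$. Also $\psi''(x)\equiv-\psi'(x)^2$ up to the truncation. This holds because $\psi$ truncates $\log$, and $x-1$ is divisible by $z$.

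\emph{First-order term.} We get $\mathbb L_t\,d\tilde\pi_t(\mathbb G)$. In the drift, $\mathbb L_t\tilde\pi_t(a)$ yields $iu\cdot\theta(\mathbb A)-\frac12u^\top\theta(\mathbb B\mathbb B^\top)u$. In the martingale part, $\mathbb L_t\tilde\pi_t(\mathbb G)\equiv1$ because $\psi'(x)x\equiv1$. So the $d\nu$ coefficient is $\theta(\tilde H+\mathbb H)-\tilde\pi(\tilde H)$.

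\emph{Second-order term.} We get $\frac12\psi''(\tilde\pi_t(\mathbb G))\,d\langle\tilde\pi(\mathbb G)\rangle_t$. Since $d\langle\nu\rangle=R\,dt$, this equals
\begin{align*}
-\tfrac12\,R^{-1}\bigl[(\theta(\tilde H+\mathbb H)-\tilde\pi(\tilde H))^{\otimes2}\bigr]dt,
\end{align*}
using $\psi''\equiv-\mathbb L^2$ and $\mathbb L\tilde\pi(\mathbb G)\tilde\pi(\tilde H)\equiv\tilde\pi(\tilde H)$. Collecting terms gives the claim.

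The main obstacle is the bookkeeping of the congruences. Each identity holds only modulo $z^n$ (or $z^{n+1}$). We must check that replacing $\psi'(x)x$ by $1$ and $\psi''$ by $-\psi'^2$ introduces only errors of order $z^n$, using that $x-1=O(z)$. We must also check that the truncation orders in $\mathbb F$, $\mathbb J$, $\mathbb A$ and $\mathbb B$ are compatible. The secondary technical point is verifying the integrability condition of Theorem \ref{theorem:generalized-KS} for the polynomial coefficients.
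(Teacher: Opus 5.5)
Your proposal is correct and follows the paper's proof essentially step by step. It uses It\^o's formula for $K_t$ as a truncated logarithm of $\tilde\pi_t(\mathbb G(u,z))$ with $\psi'(\tilde\pi_t(\mathbb G))=\mathbb L_t(u,z)$, the same decomposition of $d\mathbb G_t$ with coefficients $a,b,c$, and the generalized Kushner--Stratonovich equation applied coefficientwise to real and imaginary parts, together with the identities $\mathbb L_t\tilde\pi_t(\mathbb G)\equiv 1$ and $-\psi''\equiv\mathbb L_t^2$. One bookkeeping point needs to be stated precisely: $\psi''+\psi'^2$ is only $O(z^{n-1})$, not $O(z^n)$, so the argument also needs the fact, noted explicitly in the paper, that $d[\tilde\pi(\mathbb G)]_t$ is divisible by $z^2$ in order to discard this error.
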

\begin{proof}
  First we note that
  \begin{align}
    \mathbb{L}_t(u,z)=\frac{1-(-1)^{n}
    \left\{\tilde{\pi}_t\left(
    \mathbb{G}(u,z)
    \right)-1\right\}^{n}}{\tilde{\pi}_t\left(
    \mathbb{G}(u,z)
    \right)},\label{eq:Lt}
  \end{align}
  whenever $\tilde\pi_t(\mathbb{G}(u,z))\neq0$, and thus
  \begin{align}
    \mathbb{L}_t(u,z)\tilde{\pi}_t\left(
    \mathbb{G}(u,z)
    \right)\equiv 1. \label{eq:identity-L}
  \end{align}
  Furthermore, we can write
  \begin{align}
    \begin{split}
      &\sum_{p=0}^{n-2}
    (p+1)\left\{1-\tilde{\pi}_t\left(
    \mathbb{G}(u,z)
    \right)\right\}^p\\
    =&\frac{1-n\left\{1-\tilde{\pi}_t\left(
    \mathbb{G}(u,z)
    \right)\right\}^{n-1}+(n-1)\left\{1-\tilde{\pi}_t\left(
    \mathbb{G}(u,z)
    \right)\right\}^n}{\tilde{\pi}_t\left(
    \mathbb{G}(u,z)\right)^2}\equiv\mathbb{L}_t(u,z)^2\mod z^{n-1}.
    \end{split}\label{eq:L2}
  \end{align}
  Combined with 
  \begin{align*}
    &d\tilde{\pi}_t(\mathbb{G}(u,z))=0\mod z,\\
    &d[\tilde{\pi}(\mathbb{G}(u,z))]_t=0\mod z^2,
  \end{align*}
  we obtain
  \begin{align}
      dK_t(u,z)=&\sum_{p=0}^{n-1}
    \left\{1-\tilde{\pi}_t\left(
    \mathbb{G}(u,z)
    \right)\right\}^p d\tilde{\pi}_t(\mathbb{G}(u,z))\notag \\
    &-\frac{1}{2}\sum_{p=0}^{n-2}
    (p+1)\left\{1-\tilde{\pi}_t\left(
    \mathbb{G}(u,z)
    \right)\right\}^p d[\tilde{\pi}(\mathbb{G}(u,z))]_t\notag \\
    \equiv &\mathbb{L}_t(u,z)d\tilde{\pi}_t(\mathbb{G}(u,z))-\frac{1}{2}\mathbb{L}_t(u,z)^2d[\tilde{\pi}(\mathbb{G}(u,z))]_t.\label{eq-dK}
  \end{align}

  Now, let us find $d\tilde{\pi}_t(\mathbb{G}(u,z))$ using Theorem \ref{theorem:generalized-KS}. First, Lemma \ref{lemma:derivatives-of-X-J}(1) gives
  \begin{align*}
    d\mathbb{F}_t(u,z)\equiv&\sum_{\ell=0}^{n-1}\frac1{\ell!}\left(iu\cdot\mathbb{X}_t(z)\right)^\ell iu\cdot \left( \mathbb{A}_t(z)dt+\mathbb{B}_t(z)dV_t \right)\\
    &-\frac{1}{2}\sum_{\ell=0}^{n-2}\frac1{\ell!}\left(iu\cdot\mathbb{X}_t(z)\right)^\ell u^\top \mathbb{B}_t(z)\mathbb{B}_t(z)^\top u dt\\    
    \equiv &\mathbb{F}_t(u,z)\left\{ iu\cdot \left( \mathbb{A}_t(z)dt+\mathbb{B}_t(z)dV_t \right)-\frac{1}{2}u^\top \mathbb{B}_t(z)\mathbb{B}_t(z)^\top u\,dt \right\}.
  \end{align*}
  Here, note that we used $\mathbb{A}_t(z),~\mathbb{B}_t(z)\equiv 0 \mod z$. Coupling this equation with Lemma \ref{lemma:derivatives-of-X-J}(3), we obtain
  \begin{align*}
    &d \mathbb{G}_t(u,z)\equiv \mathbb{G}_t(u,z)\\
    &\times \Biggl\{ \left(iu\cdot\mathbb{A}_t(z)-\frac{1}{2}u^\top\mathbb{B}_t(z)\mathbb{B}_t(z)^\top u\right)dt
      +iu^\top\mathbb{B}_t(z)dV_t
      +\mathbb{H}_{t}(z)^\top R(t)^{-1}(d\mathsf Y_t-\tilde{H}_t dt)\Biggr\}.
  \end{align*}
  Now let us apply Theorem \ref{theorem:generalized-KS}. For $U=\mathbb G(u,z)$, the coefficients are
  \begin{align*}
    a_t&\equiv\mathbb G_t(u,z)\left\{iu\cdot\mathbb{A}_t(z)
      -\frac12u^\top\mathbb{B}_t(z)\mathbb{B}_t(z)^\top u\right\},\\
    b_t&\equiv\mathbb G_t(u,z)\mathbb H_t(z),\qquad
    c_t\equiv i\mathbb G_t(u,z)\mathbb{B}_t(z)^\top u.
  \end{align*}
  If we write 
  \begin{align*}
     &\bar{\mathbb{J}}_t(z)=1+\sum_{r=1}^n\int_0^t\int_0^{t_r}\cdots\int_0^{t_2}
      \mathbb{H}_{t_1}(z)^\top R(t_1)^{-1}\sigma(t_1)\,dW_{t_1}\\
     &\qquad\qquad\times\cdots\times\mathbb{H}_{t_r}(z)^\top R(t_r)^{-1}\sigma(t_r)\,dW_{t_r},
  \end{align*}
  then Girsanov's theorem gives
  \begin{align*}
    &E_{\mathsf Q^{\lambda,\epsilon}}\!\left[
    \int_0^T\tilde Z_s^{\lambda,\epsilon}(1+|\mathbb{G}_s(u,z)|^2)
    \left\{|\mathbb{A}_s(z)|+|\mathbb{B}_s(z)|^2+|\mathbb{H}_{s}(z)|^2
    +|\tilde H_s|^2\right\}ds\right]\\
    =&E\!\left[
    \int_0^T(1+|\mathbb{F}_s(u,z)\bar{\mathbb{J}}_s(z)|^2)
    \left\{|\mathbb{A}_s(z)|+|\mathbb{B}_s(z)|^2+|\mathbb{H}_{s}(z)|^2
    +|\tilde H_s|^2\right\}ds\right]\\
    <&\infty,
  \end{align*}
  noting that 
  \begin{align*}
    \int_0^T E[|X_t^{[k]}|^p]dt<\infty
  \end{align*}
  for any $k\geq 0$ and $p\geq 1$. (See also the argument following Proposition 4.5 in \citet{kurisaki2026ks}.) The same moment estimates give the integrability conditions of Theorem \ref{theorem:generalized-KS} coefficientwise. Hence, applying Theorem \ref{theorem:generalized-KS} to the real and imaginary parts of each coefficient of $z^0,\ldots,z^n$ we obtain
  \begin{align*}
    d\tilde{\pi}_t(\mathbb{G}(u,z))\equiv &\tilde{\pi}_t \left( \mathbb{G}(u,z)\left\{ iu\cdot \mathbb{A}(z)-\frac{1}{2}u^\top \mathbb{B}(z)\mathbb{B}(z)^\top u \right\} \right)dt\\
    &+\bigl\{\tilde\pi_t(\mathbb{G}(u,z)\{\tilde H+\mathbb{H}(z)\})
      -\tilde\pi_t( \mathbb{G}(u,z))\tilde\pi_t(\tilde H)\bigr\}^{\top}
      R(t)^{-1}d\nu_t.
  \end{align*}
  Now, substituting this equation into \eqref{eq-dK} and using \eqref{eq:Lt} gives the desired result.
\end{proof}

Next, we observe that the right-hand side of Proposition \ref{prop:evolution-of-K} can be expressed in terms of derivatives of $K_t(u,z)$ with respect to $u$.

\begin{definition}\label{definition:tensor-Bell}
  For integers $0\leq k\leq n$ and $r,\ell\geq0$, we write 
  \begin{align*}
    \mathsf B_{\ell,r,t}^{[k]}=\left.\frac{i^{-(\ell+r)}}{k!}\partial_z^k \partial_u^r\partial_x^\ell\exp\left( K_t(u+x,z)-K_t(u,z)\right)\right|_{x=u=z=0}.
  \end{align*}
\end{definition}

Then, $\mathsf B_{\ell,r,t}^{[k]}$ is a polynomial in the coefficients $\mathsf A$.
\begin{proposition}\label{prop:Bell-recurrence}
  We have
  \begin{align*}
    \mathsf B_{0,r,t}^{[k]}&=\mathbf1_{\{k=r=0\}},\qquad k\geq0
  \end{align*}
  and 
  \begin{align}
    &\mathsf B_{\ell,r,t}^{[k]}[x^{\otimes\ell},u^{\otimes r}]=\ell!r!\sum_{s=1}^{\ell}\frac1{s!}
    \sum_{\substack{
      a_\nu\geq1,\ b_\nu\geq0,\ q_\nu\geq a_\nu+b_\nu\ (1\leq\nu\leq s)\\
      \sum\limits_{\nu=1}^s a_\nu=\ell,\ \sum\limits_{\nu=1}^s b_\nu=r,\ \sum\limits_{\nu=1}^s q_\nu=k}}
    \prod_{\nu=1}^s
    \frac{\mathsf A_{a_\nu+b_\nu,t}^{[q_\nu]}
      [x^{\otimes a_\nu},u^{\otimes b_\nu}]}{a_\nu!b_\nu!}.\label{eq:expression-B}
  \end{align}
  for $\ell\geq1$. In particular, we have
  \begin{align*}
    \mathsf B_{1,r,t}^{[k]}&=\mathsf A_{r+1,t}^{[k]},\qquad k\geq 1.
  \end{align*}
\end{proposition}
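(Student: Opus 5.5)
The plan is to expand $K_t$ as an explicit polynomial in $(u,z)$ whose coefficients are the $\mathsf A$'s, and then to read off the coefficients of $\exp(K_t(u+x,z)-K_t(u,z))$ directly. By construction, $K_t(u,z)$ is a polynomial in $u$ and $z$. At $z=0$ we have $\mathbb X_t(0)=0$ and $\mathbb J_t(0)=1$, so the quantity inside the braces of \eqref{eq:def-generating-K} vanishes and $K_t(u,0)=0$. By Proposition \ref{prop:A-derivative-K}, the coefficient of $z^q$ for $1\leq q\leq n$ is a polynomial in $u$ whose $m$-th Taylor coefficient is determined by $\mathsf A_{m,t}^{[q]}$. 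Hence
\begin{align*}
  K_t(u,z)\equiv\sum_{q=1}^n\sum_{m=1}^{q}\frac{i^m}{m!}\mathsf A_{m,t}^{[q]}[u^{\otimes m}]\,z^q\pmod{z^{n+1}}.
\end{align*}
Here the restriction $m\leq q$ uses the remark after Proposition \ref{prop:A-derivative-K}: the $z^q$-coefficient has $u$-degree at most $q$, so $\mathsf A_{m,t}^{[q]}=0$ for $m>q$. The tensors $\mathsf A_{m,t}^{[q]}$ are symmetric, since they are coefficients of symmetric cumulant tensors.

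For the case $\ell=0$, I set $x=0$ before differentiating, which is legitimate because no $x$-derivative is taken. Then $\exp(K_t(u,z)-K_t(u,z))=1$ identically, and all its $u$- and $z$-derivatives vanish except the zeroth one. This gives $\mathsf B_{0,r,t}^{[k]}=\mathbf 1_{\{k=r=0\}}$.

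For $\ell\geq1$, write $D=K_t(u+x,z)-K_t(u,z)$ and expand $(u+x)^{\otimes m}$ binomially using symmetry. This gives
\begin{align*}
  D\equiv\sum_{q=1}^n\sum_{\substack{a\geq1,\ b\geq0\\ a+b\leq q}}\frac{i^{a+b}}{a!\,b!}\mathsf A_{a+b,t}^{[q]}[x^{\otimes a},u^{\otimes b}]\,z^q\pmod{z^{n+1}},
\end{align*}
where the $a=0$ terms cancel. Every term of $D$ has $x$-degree at least one, so in $\exp D=\sum_s D^s/s!$ only $s\leq\ell$ contributes to the part homogeneous of degree $\ell$ in $x$. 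Expanding $D^s$ as a sum over ordered $s$-tuples of terms $(a_\nu,b_\nu,q_\nu)$, I then extract the part of degree $\ell$ in $x$, degree $r$ in $u$ and degree $k$ in $z$. This amounts to imposing $\sum a_\nu=\ell$, $\sum b_\nu=r$ and $\sum q_\nu=k$.

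The extraction is done by applying $\frac{1}{k!}\partial_z^k\partial_u^r\partial_x^\ell$ at $0$. On a homogeneous polynomial $P[x^{\otimes\ell},u^{\otimes r}]$ built from symmetric pieces, this returns $\ell!\,r!$ times the symmetrized tensor. The powers of $i$ total $i^{\ell+r}$ and cancel against $i^{-(\ell+r)}$. Since $k\leq n$, the truncation modulo $z^{n+1}$ does not affect the result, and we obtain \eqref{eq:expression-B}. Both sides are symmetric in the $x$-slots and in the $u$-slots separately, so the equality of evaluations on $x^{\otimes\ell}\otimes u^{\otimes r}$ is equivalent, by polarization, to the tensor identity. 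This polarization bookkeeping is the one point that needs care; the rest is a direct computation.

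Finally, for $\ell=1$ the constraints force $s=1$, $a_1=1$, $b_1=r$ and $q_1=k$. The formula then reduces to $r!\,\mathsf A_{r+1,t}^{[k]}[x,u^{\otimes r}]/r!$. By polarization this gives $\mathsf B_{1,r,t}^{[k]}=\mathsf A_{r+1,t}^{[k]}$ for $k\geq1$.
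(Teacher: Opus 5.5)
Your argument is essentially the paper's: obtain $K_t(u+x,z)-K_t(u,z)\equiv\sum_{k}\sum_{\ell\geq1,\ \ell+r\leq k}\frac{i^{\ell+r}}{\ell!r!}\mathsf A_{\ell+r,t}^{[k]}[x^{\otimes\ell},u^{\otimes r}]z^k \pmod{z^{n+1}}$ from Proposition~\ref{prop:A-derivative-K}, exponentiate, keep only $s\leq\ell$, and compare coefficients, so it is correct. The one slip is that your intermediate display for $K_t(u,z)$ omits the $u$-independent term $K_t(0,z)$, a truncated $\log\tilde{\mathsf E}_t[\mathbb J_t(z)]$ that is generally nonzero. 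This is harmless because that term cancels in the difference, and the paper sidesteps it by differentiating the difference directly.
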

\begin{proof}
  The case $\ell=0$ follows by setting $x=0$ in the defining exponential. Let $\ell \geq 1$. Due to Proposition \ref{prop:A-derivative-K}, we have
  \begin{align*}
    &\left.\partial_z^k \partial_u^r\partial_x^\ell\left\{ K_t(u+x,z)-K_t(u,z) \right\}\right|_{x=u=z=0}=\partial_z^k\partial_u^{r+\ell}K_t(0,0)
    =k!i^{r+\ell}\mathsf A_{r+\ell,t}^{[k]}
  \end{align*}
  for $1\leq k\leq n$, $\ell\geq1$, and $r\geq0$, with the slots grouped according to $x$ and $u$. Therefore,
  \begin{align*}
    &K_t(u+x,z)-K_t(u,z)\equiv\sum_{k=1}^n\sum_{\substack{\ell\geq1,\ r\geq0\\\ell+r\leq k}}
    \frac{i^{\ell+r}}{\ell!r!}\mathsf A_{\ell+r,t}^{[k]}
    [x^{\otimes\ell},u^{\otimes r}]z^k\pmod{z^{n+1}}.
  \end{align*}
  Substituting this expression into the Taylor expansion of $\exp$, only powers $s\leq\ell$ contribute to degree $\ell$ in $x$. Comparing the terms of degrees $(\ell,r,k)$ in $(x,u,z)$ and multiplying by $\ell!r!i^{-(\ell+r)}$ gives the formula. 
\end{proof}

\begin{remark}\label{remark:Bell-vanishing}
  By Theorem \ref{theorem:A-high-rank-vanishing}, the sum in Proposition \ref{prop:Bell-recurrence} may be restricted further to indices satisfying
  \begin{align*}
    q_\nu\geq2(a_\nu+b_\nu)-2,\qquad \nu=1,\ldots,s,
  \end{align*}
  since every excluded product contains a zero $\mathsf A$ factor. Together with $q_\nu\geq a_\nu+b_\nu$ and $s\leq\ell$, these conditions give
  \begin{align*}
    k\geq\ell+r,\qquad k\geq2(\ell+r-s)\geq2r.
  \end{align*}
  Consequently, for $\ell\geq1$, $r\geq0$, and $0\leq k\leq n$,
  \begin{align*}
    \mathsf B_{\ell,r,t}^{[k]}=0
    \qquad\text{if}\qquad k<\max\{\ell+r,2r\}.
  \end{align*}
\end{remark}

\begin{remark}\label{remark:B-expression}
  For $p$- and $q$-tensors $S$ and $T$, and integers $0\leq a\leq p$ and $0\leq b\leq q$, let us introduce the tensor product with ordering $S\otimes_{(a,b)} T$ by
\begin{align*}
  (S\otimes_{(a,b)} T)_{i_1\cdots i_{a}j_1\cdots j_{b}k_{1}\cdots k_{p-a}\ell_1\cdots \ell_{q-b}}=S_{i_1\cdots i_a k_1\cdots k_{p-a}}T_{j_{1}\cdots {j_{b}}\ell_{1}\cdots \ell_{q-b}}.
\end{align*}
Empty index blocks are omitted. Moreover, for $p_i$-tensors $T_i~(i=1,\ldots,n)$ and integers $0\leq a_i\leq p_i$, let us write
\begin{align*}
  \sideset{}{_{(a_1)}}\bigotimes_{i=1}^1T_i&=T_1,\\
  \sideset{}{_{(a_1,\ldots,a_n)}}\bigotimes_{i=1}^nT_i
  &=\left(\sideset{}{_{(a_1,\ldots,a_{n-1})}}\bigotimes_{i=1}^{n-1}T_i\right)
  \otimes_{(a_1+\cdots+a_{n-1},a_n)}T_n,\qquad n\geq2.
\end{align*}
Then, we can write
\begin{align*}
    &\mathsf B_{\ell,r,t}^{[k]}=\ell!r!\sum_{s=1}^{\ell}\frac1{s!}
    \sum_{\substack{
      a_\nu\geq1,\ b_\nu\geq0,\ q_\nu\geq a_\nu+b_\nu\ (1\leq\nu\leq s)\\
      \sum\limits_{\nu=1}^s a_\nu=\ell,\ \sum\limits_{\nu=1}^s b_\nu=r,\ \sum\limits_{\nu=1}^s q_\nu=k}}
    {\rm Sym}_{\ell,r}\left\{ \sideset{}{_{(a_1,\cdots,a_s)}}\bigotimes_{\nu=1}^s
    \frac{\mathsf A_{a_\nu+b_\nu,t}^{[q_\nu]}}{a_\nu!b_\nu!} \right\},
  \end{align*}
  where
  \begin{align*}
    ({\rm Sym}_{\ell,r}T)_{i_1\cdots i_\ell j_1\cdots j_r}
    =\frac1{\ell!r!}\sum_{\sigma\in\mathfrak S_\ell}\sum_{\tau\in\mathfrak S_r}
    T_{i_{\sigma(1)}\cdots i_{\sigma(\ell)}j_{\tau(1)}\cdots j_{\tau(r)}}
  \end{align*}
  for an $(\ell+r)$-tensor $T$. Here $\mathfrak S_j$ denotes the permutation group on $j$ indices, with $\mathfrak S_0$ consisting of the empty permutation.
\end{remark}

\begin{proposition}\label{prop:R-Bell}
  For every $k\geq1$, $r\geq0$, and $0\leq m\leq n$, we have
  \begin{align*}
    &\left.\frac1{m!}\partial_z^m\partial_u^r
    \theta_t^{u,z}(\mathbb{X}(z)^{\otimes k})\right|_{u=z=0}=i^r\mathsf B_{k,r,t}^{[m]}.
  \end{align*}
\end{proposition}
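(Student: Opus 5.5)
The plan is to express $\theta_t^{u,z}(\mathbb X(z)^{\otimes k})$ as a normalized $x$-derivative of $\tilde\pi_t(\mathbb G(u+x,z))$. Then I will identify $\tilde\pi_t(\mathbb G)$ with $\exp K_t$ up to high powers of $z$, and read off the coefficients through Definition~\ref{definition:tensor-Bell}.

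\emph{Step 1 (derivative identity).} By \eqref{eq:def-F-bb}, differentiating the truncated exponential $k$ times in the shifted argument gives
\begin{align*}
  \left.\partial_x^k\mathbb F_t(u+x,z)\right|_{x=0}
  =i^k\,\mathbb X_t(z)^{\otimes k}\sum_{\ell=0}^{n-k}\frac1{\ell!}\bigl(iu\cdot\mathbb X_t(z)\bigr)^\ell .
\end{align*}
This differs from $i^k\mathbb X_t(z)^{\otimes k}\mathbb F_t(u,z)$ only by terms containing at least $n+1$ factors of $\mathbb X_t(z)$. Since $\mathbb X_t(z)$ is divisible by $z$, the difference is $O(z^{n+1})$. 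Multiplying by $\mathbb J_t(z)$ and applying $\tilde\pi_t$ coefficientwise, which is linear and commutes with the polynomial dependence on $(u,x,z)$, yields
\begin{align*}
  \tilde\pi_t\bigl(\mathbb G(u,z)\mathbb X(z)^{\otimes k}\bigr)\equiv i^{-k}\left.\partial_x^k\tilde\pi_t\bigl(\mathbb G(u+x,z)\bigr)\right|_{x=0}\pmod{z^{n+1}}.
\end{align*}

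\emph{Step 2 (exponential of $K$).} Since $\tilde\pi_t(\mathbb G(u,z))-1$ is divisible by $z$ and $K_t$ is the logarithm series truncated at $p=n$ (see \eqref{eq:def-K-pi}), we have $\exp K_t(u,z)\equiv\tilde\pi_t(\mathbb G(u,z))\pmod{z^{n+1}}$, as formal power series in $z$ with polynomial coefficients in $u$. For the prefactor I will use the exact identity from \eqref{eq:Lt},
\begin{align*}
  \mathbb L_t\,\tilde\pi_t(\mathbb G)=1-\bigl(1-\tilde\pi_t(\mathbb G)\bigr)^n,
\end{align*}
in which the correction term is divisible by $z^n$. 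Then
\begin{align*}
  \theta_t^{u,z}(\mathbb X(z)^{\otimes k})=\frac{\tilde\pi_t(\mathbb G(u,z)\mathbb X(z)^{\otimes k})}{\tilde\pi_t(\mathbb G(u,z))}\bigl\{1-(1-\tilde\pi_t(\mathbb G))^n\bigr\}.
\end{align*}
Because $\tilde\pi_t(\mathbb G\,\mathbb X^{\otimes k})$ is divisible by $z^k$ with $k\geq1$, the correction term is $O(z^{n+1})$. The division is legitimate as formal power series because $\tilde\pi_t(\mathbb G)\equiv1\pmod z$.

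\emph{Step 3 (conclusion).} Combining Steps 1 and 2,
\begin{align*}
  \theta_t^{u,z}(\mathbb X(z)^{\otimes k})\equiv i^{-k}\left.\partial_x^k\exp\bigl(K_t(u+x,z)-K_t(u,z)\bigr)\right|_{x=0}\pmod{z^{n+1}}.
\end{align*}
Applying $\frac1{m!}\partial_z^m\partial_u^r$ at $u=z=0$ with $m\leq n$, Definition~\ref{definition:tensor-Bell} gives $i^{-k}\cdot i^{k+r}\mathsf B_{k,r,t}^{[m]}=i^r\mathsf B_{k,r,t}^{[m]}$.

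\emph{Main obstacle.} The delicate point is the endpoint $m=n$. The section works modulo $z^n$, and $\mathbb L_t$ inverts $\tilde\pi_t(\mathbb G)$ only modulo $z^n$. I will handle this by the exact factorization in Step 2, using the extra factor $z^k$ coming from $\mathbb X^{\otimes k}$, rather than invoking \eqref{eq:identity-L}. A secondary point is justifying that $\partial_x$ commutes with $\tilde\pi_t$. This is immediate because all dependence on $(u,x,z)$ is polynomial and $\tilde\pi_t$ acts coefficientwise on integrable coefficients, which are integrable by the moment bounds already used in Proposition~\ref{prop:evolution-of-K}.
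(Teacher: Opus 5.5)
Your proposal is correct and follows the paper's proof essentially step for step. Both arguments use the derivative identity $\partial_u^k\mathbb G\equiv i^k\mathbb G\,\mathbb X^{\otimes k}\pmod{z^{n+1}}$, the exact factorization $\mathbb L_t\tilde\pi_t(\mathbb G)=1-(1-\tilde\pi_t(\mathbb G))^n$ with the extra factor $z^k$ absorbing the correction at the endpoint $m=n$, and the identification $\log\tilde\pi_t(\mathbb G)\equiv K_t\pmod{z^{n+1}}$. The only difference is cosmetic: you apply $\partial_u^r$ directly to a congruence that holds identically in $u$, whereas the paper first introduces an auxiliary shift $y$ and then takes $\partial_y^r$.
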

\begin{proof}
  First, note that
  \begin{align*}
    \partial_u^k \mathbb{F}_t(u,z)
    &=i^k\sum_{\ell=0}^{n-k}\frac{1}{\ell !}(iu\cdot \mathbb{X}_t(z))^\ell \mathbb{X}_t(z)^{\otimes k}\\
    &\equiv i^k\mathbb{F}_t(u,z)\mathbb{X}_t(z)^{\otimes k}
      \mod{z^{n+1}},\qquad 0\leq k\leq n,
  \end{align*}
  since $\mathbb{X}_t(z)^{\otimes k}\equiv 0 \mod z^k$. Thus, we have 
  \begin{align*}
    \partial_u^k \mathbb{G}_t(u,z)\equiv i^k \mathbb{G}_t(u,z)\mathbb{X}_t(z)^{\otimes k} \mod z^{n+1}.
  \end{align*}
  This congruence also holds for $k>n$, since its left-hand side is zero and its right-hand side is divisible by $z^k$.
  Therefore, we have
  \begin{align*}
    \theta_t^{u,z}(\mathbb{X}(z)^{\otimes k})&=\mathbb{L}_t(u,z)\tilde{\pi}_t(\mathbb{G}(u,z)\mathbb{X}(z)^{\otimes k})\\
    &\equiv i^{-k}\mathbb{L}_t(u,z)\partial_x^k\tilde{\pi}_t(\mathbb{G}(u+x,z))|_{x=0}\mod z^{n+1},
  \end{align*}
  and hence
  \begin{align*}
    \partial_u^r\theta_t^{u,z}(\mathbb{X}(z)^{\otimes k})
    \equiv i^{-k}\left.\partial_y^r\partial_x^k\left\{ \mathbb{L}_t(u+y,z)\tilde{\pi}_t(\mathbb{G}(u+x+y,z)) \right\}\right|_{x=y=0}\mod z^{n+1}.
  \end{align*}
  
  Since $\tilde\pi_t(\mathbb G(v,0))=1$, the quotient and the branch of the logarithm with $\log 1=0$ used below are well-defined near $(v,z)=(u,0)$. Congruences for nonpolynomial expressions refer to their Taylor coefficients in $z$ at zero.
  On the other hand, we can write
  \begin{align*}
    \mathbb{L}_t(u+y,z)\tilde{\pi}_t(\mathbb{G}(u+x+y,z))&=\sum_{p=0}^{n-1}
    \left\{1-\tilde{\pi}_t\left(
    \mathbb{G}(u+y,z)
    \right)\right\}^p \tilde{\pi}_t(\mathbb{G}(u+x+y,z))\\
    &=\frac{\tilde{\pi}_t(\mathbb{G}(u+x+y,z))}{\tilde{\pi}_t\left(
    \mathbb{G}(u+y,z)
    \right)}\left[ 1- \left\{1-\tilde{\pi}_t\left(
    \mathbb{G}(u+y,z)
    \right)\right\}^n \right].
  \end{align*}
  Therefore, we have
  \begin{align}
    &\left.\partial_x^k\left\{ \mathbb{L}_t(u+y,z)\tilde{\pi}_t(\mathbb{G}(u+x+y,z)) \right\}\right|_{x=0}
    \equiv
    \left.\partial_x^k\frac{\tilde{\pi}_t(\mathbb{G}(u+x+y,z))}{\tilde{\pi}_t\left(
    \mathbb{G}(u+y,z)
    \right)}\right|_{x=0}\pmod{z^{n+1}}\label{eq:dx-L}
  \end{align}
  for $k\geq 1$, noting that
  \begin{align*}
    \partial_u^k \mathbb{G}(u,z)\equiv 0 \mod z^k,\quad 1-\tilde{\pi}_t\left(
    \mathbb{G}(u+y,z)
    \right)\equiv 0 \mod z.
  \end{align*}
  Furthermore, Taylor's formula for the logarithm and \eqref{eq:def-K-pi} give
  \begin{align*}
    \log\tilde\pi_t(\mathbb G(v,z))=K_t(v,z)+z^{n+1}Q_t(v,z),
  \end{align*}
  where $Q_t$ is smooth near $(u,0)$. Hence the right-hand side of \eqref{eq:dx-L} equals
  \begin{align*}
    &\left.\partial_x^k\exp\left( \log\tilde\pi_t(\mathbb{G}(u+x+y,z))-\log\tilde\pi_t(\mathbb{G}(u+y,z))\right)\right|_{x=0}\\
    \equiv &\left.\partial_x^k\exp\left( K_t(u+x+y,z)-K_t(u+y,z)\right)\right|_{x=0}
    \mod z^{n+1}.
  \end{align*}
  The remainder remains divisible by $z^{n+1}$ after differentiation in $x$ and $y$. Applying $\partial_y^r\partial_z^m/m!$, evaluating at $u=y=z=0$, and using Definition \ref{definition:tensor-Bell} completes the proof, since $m\leq n$ and $i^{-k}i^{k+r}=i^r$.
\end{proof}

\begin{definition}\label{definition:tensor-contraction}
  For $a=1,\cdots,m$, let $I_a=(i_{a,1},\ldots,i_{a,p_a})$ and $J_a=(j_{a,1},\ldots,j_{a,q_a})$ be multi-indices with compatible coordinate ranges. For tensors $T=(T_{I_1,\ldots,I_m})$ and $S^a=(S^a_{I_a,J_a})$, define
  \begin{align*}
    T[S^1,\ldots,S^m]&:=T[S^1\otimes\cdots\otimes S^m],\\
    \bigl(T[S^1,\ldots,S^m]\bigr)_{J_1,\ldots,J_m}
    &:=\sum_{I_1,\ldots,I_m}T_{I_1,\ldots,I_m}
      S^1_{I_1,J_1}\cdots S^m_{I_m,J_m}.
  \end{align*}
  Empty index blocks are allowed. The same notation applies to tensor-valued $T$, with its output indices retained.
\end{definition}

\begin{proposition}\label{prop:tensor-contraction}
  For tensors with compatible index blocks,
  \begin{align*}
    T[S^1[U_1],\ldots,S^m[U_m]]
      &=\bigl(T[S^1,\ldots,S^m]\bigr)[U_1,\ldots,U_m].
  \end{align*}
  In particular, for vector-valued $S,T$, matrix-valued $M$, and a bilinear form $R$, with $r$ inputs for $S,M$ and $s$ inputs for $T$,
  \begin{align*}
    u\cdot S[u^{\otimes r}]&=S[u^{\otimes(r+1)}],\\
    u^\top M[u^{\otimes r}]u&=M[u^{\otimes(r+2)}],\\
    R[S[u^{\otimes r}],T[u^{\otimes s}]]
      &=(R[S,T])[u^{\otimes(r+s)}].
  \end{align*}
\end{proposition}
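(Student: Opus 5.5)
The plan is to prove the general identity directly from Definition \ref{definition:tensor-contraction} by expanding both sides in components and reordering finite sums. Then each listed special case follows by choosing the blocks suitably.

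For the general identity, fix index blocks $I_a$ (contracted against $T$), $J_a$ (shared between $S^a$ and $U_a$), and output blocks $K_a$ of $U_a$. By definition, $S^a[U_a]$ has components
\begin{align*}
  \bigl(S^a[U_a]\bigr)_{I_a,K_a}=\sum_{J_a}S^a_{I_a,J_a}(U_a)_{J_a,K_a}.
\end{align*}
Inserting this into $T[S^1[U_1],\ldots,S^m[U_m]]$ gives
\begin{align*}
  \sum_{I_1,\ldots,I_m}T_{I_1,\ldots,I_m}\prod_{a=1}^m\sum_{J_a}S^a_{I_a,J_a}(U_a)_{J_a,K_a}.
\end{align*}
All sums are finite, so we may distribute the product over the inner sums. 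Interchanging the order of summation, we first sum over the $I_a$ and then over the $J_a$. The inner sum over the $I_a$ is exactly $\bigl(T[S^1,\ldots,S^m]\bigr)_{J_1,\ldots,J_m}$, and the remaining contraction over the $J_a$ is the definition of $\bigl(T[S^1,\ldots,S^m]\bigr)[U_1,\ldots,U_m]$. If $T$ is tensor-valued, its output indices ride along unchanged. Empty blocks are handled by reading an empty sum index as a single term, so no separate case is needed.

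The special cases are instances of this identity with particular block choices.

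\begin{itemize}
  \item For $u\cdot S[u^{\otimes r}]$, take $T=u$ viewed as a $1$-tensor, $m=1$, and $S^1=S$, with $I_1$ the output index of $S$ and $J_1$ its $r$ inputs. Then $u\cdot S[u^{\otimes r}]=\sum_i u_iS_{i,J}u^{\otimes r}_J$. This equals $S$ contracted with $u$ in all $r+1$ slots, which is $S[u^{\otimes(r+1)}]$ under the convention that the output slot counts as an input slot.
  \item For $u^\top M[u^{\otimes r}]u$, apply the same computation with the two matrix indices of $M$ both contracted against $u$.
  \item For $R[S[u^{\otimes r}],T[u^{\otimes s}]]$, take $m=2$, $S^1=S$, $S^2=T$, $U_1=u^{\otimes r}$, $U_2=u^{\otimes s}$. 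Then use $u^{\otimes r}\otimes u^{\otimes s}=u^{\otimes(r+s)}$, with the $T[S^1,\ldots,S^m]:=T[S^1\otimes\cdots\otimes S^m]$ convention.
\end{itemize}

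The only point needing care is bookkeeping. In the special cases, the output indices of $S$ and $M$ are being reinterpreted as input slots, and the resulting tensor's index order has to match the order in which $u$ is inserted. Because $u^{\otimes(r+1)}$ is symmetric under permutations of its slots, the slot ordering does not matter, and the identities hold regardless of any convention on where the extra slot is placed. I expect no real obstacle beyond stating this convention clearly.
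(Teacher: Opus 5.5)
Your proposal is correct and matches the paper's argument: the paper's proof simply notes that the identities follow by interchanging the finite sums over contracted indices, and your componentwise computation spells that out. Your reading of the special cases is also the one the paper relies on in the proof of Theorem~\ref{theorem:coefficient-tower}: the output slot of $S$ or $M$ is treated as an extra input slot, and slot order is irrelevant because every slot receives the same $u$.
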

\begin{proof}
  The identities follow by interchanging the finite sums over contracted indices.
\end{proof}

Combining Propositions \ref{prop:evolution-of-K} and \ref{prop:R-Bell}, we can describe the evolution of $\mathsf{A}_{r,t}^{[k]}$ introduced in Section \ref{section:conditional-cumulant}. 
\begin{theorem}\label{theorem:coefficient-tower}
  For integers $1\leq r<k$, we have
  \begin{align*}
    &d\mathsf A_{r,t}^{[k]}\\
    ={}&r\operatorname{Sym}_r\!\Bigl(\bigl\{a_t^{[1,0]}
      -\lambda^2\mathsf A_{2,t}^{[2]}(h_t^{[1,0]})^\top
        R(t)^{-1}h_t^{[1,0]}\bigr\}
        [\mathsf A_{r,t}^{[k]}]\Bigr)dt\\
    &+r\sum_{\substack{q,\ell\geq0\\2\leq q+\ell\leq k-r+1}}\frac1{q!\ell!}
      \operatorname{Sym}_r\!\Bigl(\bigl\{a_t^{[\ell,q]}
        -\lambda^2\mathsf A_{2,t}^{[2]}(h_t^{[1,0]})^\top R(t)^{-1}h_t^{[\ell,q]}\bigr\}
        [\mathsf B_{\ell,r-1,t}^{[k-q]}]\Bigr)dt\\
    &+\binom r2\sum_{\substack{q\geq2,\ \ell\geq0\\3\leq q+\ell\leq k-r+2}}\frac1{q!\ell!}
      \operatorname{Sym}_r\!\left(\beta_t^{[\ell,q]}
        [\mathsf B_{\ell,r-2,t}^{[k-q]}]\right)dt\\
    &-\frac{\lambda^2}{2}\sum_{p=0}^r\binom rp
      \sum_{\substack{k_1\geq p+2,\ k_2\geq r-p+2\\k_1+k_2=k+2}}
      \sum_{\substack{q_1,\ell\geq0\\q_1+\ell\leq k_1-p}}\quad
      \sum_{\substack{q_2,j\geq0\\q_2+j\leq k_2-r+p}}\frac1{q_1!\ell!q_2!j!}\\
    &\quad\times\operatorname{Sym}_r\!\Bigl(R(t)^{-1}\Bigl[
      h_t^{[\ell,q_1]}[\mathsf B_{\ell,p,t}^{[k_1-q_1]}],
      h_t^{[j,q_2]}[\mathsf B_{j,r-p,t}^{[k_2-q_2]}]
      \Bigr]\Bigr)dt\\
    &+\lambda\bigl(R(t)^{-1}d\nu_t\bigr)\Biggl[
      h_t^{[1,0]}[\mathsf A_{r+1,t}^{[k+1]}]
      +\sum_{\substack{q\geq0,\ \ell\geq1\\2\leq q+\ell\leq k-r+1}}\frac1{q!\ell!}
      h_t^{[\ell,q]}
        [\mathsf B_{\ell,r,t}^{[k+1-q]}]\Biggr].
  \end{align*}
  where $\beta(x,\epsilon)=b(x,\epsilon)b(x,\epsilon)^\top$, and $\operatorname{Sym}_r$ denotes normalized symmetrization:
\begin{align*}
  (\operatorname{Sym}_r T)_{j_1,\ldots,j_r}
    :=\frac1{r!}\sum_{\sigma\in\mathfrak S_r}
      T_{j_{\sigma(1)},\ldots,j_{\sigma(r)}},
\end{align*}
where $\mathfrak S_r$ is the set of permutations of $\{1,\ldots,r\}$.
\end{theorem}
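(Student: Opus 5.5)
The plan is to read off the equation for $\mathsf A_{r,t}^{[k]}$ from Proposition \ref{prop:evolution-of-K}. By Proposition \ref{prop:A-derivative-K}, $\mathsf A_{r,t}^{[k]}$ is the coefficient of $z^k$ in $i^{-r}\partial_u^rK_t(u,z)|_{u=0}$. I will apply $i^{-r}\partial_u^r|_{u=0}$ and extract the $z^k$ coefficient on both sides of the stochastic differential. Since the congruence there is modulo $z^n$, I first enlarge $n$ so that $k+1<n$. This is harmless, because the coefficients of $z^k$ do not depend on $n\geq k$. The work is then to rewrite each term of Proposition \ref{prop:evolution-of-K} through Proposition \ref{prop:R-Bell}, after Taylor-expanding the model functions around $X_t^{[0]}$.

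\textbf{Drift from the state equation.} By \eqref{eq:def-A-bb}, $\theta_t^{u,z}(\mathbb A(z))$ is a sum of terms $\frac{z^q}{q!j!}a_t^{[j,q]}[\theta_t^{u,z}(\mathbb X(z)^{\otimes j})]$. The same holds for $\theta_t^{u,z}(\mathbb B\mathbb B^\top)$, written with $\beta$-derivatives, where the condition $q\geq2$ appears because $b_t^{[j,0]}=0$ and $\partial_\epsilon b$ pairs up. Applying the Leibniz rule to $\partial_u^r(iu\cdot\theta)$ produces the factor $r$ and one derivative on $\theta$; for the quadratic term it produces $\binom r2$ and two such derivatives. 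Proposition \ref{prop:R-Bell} turns $\partial_z^{m}\partial_u^{r-1}\theta_t(\mathbb X^{\otimes\ell})/m!$ into $i^{r-1}\mathsf B^{[m]}_{\ell,r-1,t}$ with $m=k-q$. Proposition \ref{prop:tensor-contraction} handles the contractions, and symmetrization produces $\operatorname{Sym}_r$.

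I then split the sum according to $q+\ell$:
\begin{itemize}
\item the term $q+\ell=1$ with $\ell=1$ gives $a_t^{[1,0]}[\mathsf B^{[k]}_{1,r-1}]=a_t^{[1,0]}[\mathsf A_{r,t}^{[k]}]$ by Proposition \ref{prop:Bell-recurrence};
\item the term $\ell=0,q=1$ contributes nothing, since $\mathsf B_{0,r-1}^{[k-1]}=0$ for $k-1\geq1$;
\item the range $q+\ell\leq k-r+1$ comes from Remark \ref{remark:Bell-vanishing}, because $\mathsf B^{[k-q]}_{\ell,r-1}=0$ unless $k-q\geq\ell+r-1$.
\end{itemize}

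\textbf{Observation terms.} Here $\theta_t(\tilde H+\mathbb H)-\tilde\pi_t(\tilde H)$ is expanded with Lemma \ref{lemma:derivatives-of-X-J}(2). This gives $\frac{\lambda}{z}\sum\frac{z^q}{q!j!}h_t^{[j,q]}[\theta(\mathbb X^{\otimes j})]$ plus $H^{[0]}$ terms. The $H^{[0]}$ terms cancel against $\tilde\pi_t(\tilde H)$ at $u=0$ and contribute nothing after a $u$-derivative, since $\theta(1)=\mathbb L\tilde\pi(\mathbb G)\equiv1$. The extra $1/z$ shifts the index $k\mapsto k+1$, which is the origin of $\mathsf A_{r+1,t}^{[k+1]}$ and $\mathsf B_{\ell,r,t}^{[k+1-q]}$ in the $d\nu$ term. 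Only $\ell\geq1$ survives, by $\mathsf B_{0,r}=0$ for $r\geq1$.

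For the quadratic $dt$ term, $-\tfrac12R^{-1}[v^{\otimes2}]$, the Leibniz rule gives $\sum_p\binom rp$, and the product of two $1/z$ factors explains the constraint $k_1+k_2=k+2$. The range restrictions $k_1\geq p+2$ and $k_2\geq r-p+2$ need care. When $p=0$, the factor is $\theta(\tilde H+\mathbb H)-\tilde\pi(\tilde H)$ at $u=0$, and its $z$-expansion must be computed explicitly. Its lowest nonvanishing pieces are the mean terms $\lambda h^{[1,0]}\mathsf A_1^{[\cdot]}$. These should also lie in the $\mathsf B_{\ell,0}$ family via Proposition \ref{prop:R-Bell} with $r=0$.

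\textbf{Main obstacle.} I expect the hardest step to be the bookkeeping that isolates $-\lambda^2\mathsf A^{[2]}_{2,t}(h^{[1,0]})^\top R^{-1}h^{[\ell,q]}$ inside the drift. These pieces come from the cross terms of the quadratic observation term in which one factor has the lowest degree ($p=1$ or $r-1$ with $\mathsf B_{1,1}^{[2]}=\mathsf A_2^{[2]}$). I would identify them explicitly, move them into the first two lines, and check that the remaining cross terms match the stated index ranges. In parallel I would verify that the innovation $d\nu_t$, with drift $\tilde\pi_t(\tilde H)$, absorbs exactly the mean-dependent pieces, which is why the ranges in the fourth line start at $k_1\geq p+2$. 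Finally, I need $r<k$ to guarantee that no constant $\mathsf B_{0,\cdot}$ contributions survive, and I would check this at the end.
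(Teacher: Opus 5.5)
Your plan is the paper's own proof. You expand $dK_t$ by Proposition~\ref{prop:evolution-of-K}, convert each $\theta_t^{u,z}(\mathbb X(z)^{\otimes\ell})$ into $\mathsf B$'s via Proposition~\ref{prop:R-Bell}, and read off the $z^k$ coefficient of $i^{-r}\partial_u^r|_{u=0}$. You then move the cross terms with $(k_1,p)=(2,1)$ or $(k_2,r-p)=(2,1)$, where $\mathsf B_{1,1,t}^{[2]}=\mathsf A_{2,t}^{[2]}$, from the quadratic observation term into the drift.

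Two bookkeeping details need correcting. First, subtracting $\tilde\pi_t(\tilde H)$ only removes the $z^0$ coefficient, which gives $k_1\geq2$. For $p\geq2$, the bound $k_1\geq p+2$ comes instead from Remark~\ref{remark:Bell-vanishing}, that is, from $\mathsf A_{p+1,t}^{[p+1]}=0$. Second, the essential use of $r<k$ is that the two separated cross terms never coincide (they would at $r=k=2$), so they really contribute $2r$ copies.
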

\begin{proof}
  Fix integers $1\leq r<k$ and choose $n\geq k+1$. The coefficients $\mathsf A$ and $\mathsf B$ involved below are independent of this choice, by \eqref{eq:cumulant-coefficient-polynomial} and \eqref{eq:expression-B}. First recall that Proposition \ref{prop:A-derivative-K} gives, for $r\geq1$ and $1\leq k\leq n$,
\begin{align*}
  \left.\partial_z^{k}\partial_u^r K_t(u,z)\right|_{u=z=0}=i^r k! \mathsf{A}_{r,t}^{[k]}.
\end{align*}
Thus we can write
\begin{align*}
  K_t(u,z)\equiv K_t(0,z)
    +\sum_{k=1}^{n-1}\sum_{r=1}^k\frac{i^r}{r!}
      \mathsf A_{r,t}^{[k]}[u^{\otimes r}]z^k.
\end{align*}
On the other hand, Propositions \ref{prop:evolution-of-K} and \ref{prop:R-Bell} yield
\begin{align*}
    &dK_t(u,z)\\
    \equiv{}&i\sum_{k=1}^{n-1}\sum_{r=0}^k
      \sum_{\substack{0\leq q\leq k,\ 0\leq\ell\leq k-q\\q+\ell\geq1}}
      \frac{i^rz^k}{q!\ell!r!}a_t^{[\ell,q]}
      [\mathsf B_{\ell,r,t}^{[k-q]}[u^{\otimes r}]][u]dt\\
    &-\frac12\sum_{k=2}^{n-1}\sum_{r=0}^{k-2}
      \sum_{q=2}^k\sum_{\ell=0}^{k-q}
      \frac{i^rz^k}{q!\ell!r!}\beta_t^{[\ell,q]}
      [\mathsf B_{\ell,r,t}^{[k-q]}[u^{\otimes r}]][u^{\otimes2}]dt\\
    &-\frac{\lambda^2}{2}R(t)^{-1}\Biggl[
      \Biggl\{-h_t^{[1,0]}[\mathsf A_{1,t}^{[1]}]
      -h_t^{[0,1]}
      +\sum_{k=1}^n\sum_{r=0}^k\sum_{q=0}^k\sum_{\ell=0}^{k-q}
      \frac{i^rz^{k-1}}{q!\ell!r!}h_t^{[\ell,q]}
      [\mathsf B_{\ell,r,t}^{[k-q]}[u^{\otimes r}]]\Biggr\}^{\otimes2}\Biggr]dt\\
    &+\lambda\Biggl\{-h_t^{[1,0]}[\mathsf A_{1,t}^{[1]}]
      -h_t^{[0,1]}
      +\sum_{k=1}^n\sum_{r=0}^k\sum_{q=0}^k\sum_{\ell=0}^{k-q}
      \frac{i^rz^{k-1}}{q!\ell!r!}h_t^{[\ell,q]}
      [\mathsf B_{\ell,r,t}^{[k-q]}[u^{\otimes r}]]\Biggr\}^{\top}R(t)^{-1}d\nu_t.
\end{align*}
Here, we used
\begin{align*}
  H_t^{[0]}-\tilde\pi_t(\tilde H)
  =-\lambda\left\{h_t^{[1,0]}[\mathsf A_{1,t}^{[1]}]
      +h_t^{[0,1]}\right\},
\end{align*}
and hence
  \begin{align*}
    &\theta_t^{u,z}(\tilde H+\mathbb H(z))-\tilde\pi_t(\tilde H)
    \equiv\lambda\sum_{k=2}^n\sum_{r=0}^k\sum_{q=0}^k\sum_{\ell=0}^{k-q}
      \frac{i^rz^{k-1}}{q!\ell!r!}h_t^{[\ell,q]}
      [\mathsf B_{\ell,r,t}^{[k-q]}][u^{\otimes r}].
\end{align*}
Further applying Proposition \ref{prop:tensor-contraction} gives
\begin{align*}
  &dK_t(u,z)\\
  \equiv{}&i\sum_{k=1}^{n-1}\sum_{r=0}^k
    \sum_{\substack{0\leq q\leq k,\ 0\leq\ell\leq k-q\\q+\ell\geq1}}
    \frac{i^rz^k}{q!\ell!r!}a_t^{[\ell,q]}
    [\mathsf B_{\ell,r,t}^{[k-q]}][u^{\otimes(r+1)}]dt\\
  &-\frac12\sum_{k=2}^{n-1}\sum_{r=0}^{k-2}\sum_{q=2}^k\sum_{\ell=0}^{k-q}
    \frac{i^rz^k}{q!\ell!r!}\beta_t^{[\ell,q]}
    [\mathsf B_{\ell,r,t}^{[k-q]}][u^{\otimes(r+2)}]dt\\
  &-\frac{\lambda^2}{2}\sum_{k_1,k_2=2}^n\sum_{r=0}^{k_1}\sum_{s=0}^{k_2}
    \sum_{\substack{q_1,\ell\geq0\\q_1+\ell\leq k_1}}\sum_{\substack{q_2,j\geq0\\q_2+j\leq k_2}}
    \frac{i^{r+s}z^{k_1+k_2-2}}{q_1!\ell!q_2!j!r!s!}
     R(t)^{-1}\left[
    h_t^{[\ell,q_1]}[\mathsf B_{\ell,r,t}^{[k_1-q_1]}],
    h_t^{[j,q_2]}[\mathsf B_{j,s,t}^{[k_2-q_2]}]
    \right][u^{\otimes(r+s)}]dt\\
  &+\lambda\sum_{k=2}^n\sum_{r=0}^k\sum_{q=0}^k\sum_{\ell=0}^{k-q}
    \frac{i^rz^{k-1}}{q!\ell!r!}(R(t)^{-1}d\nu_t)
    \left[h_t^{[\ell,q]}
    [\mathsf B_{\ell,r,t}^{[k-q]}]\right][u^{\otimes r}].
\end{align*}

Since $\left.\partial_u^r(T[u^{\otimes r}])\right|_{u=0}
=r!\operatorname{Sym}_r T$, comparison of the coefficients of $z^k$ after applying $i^{-r}\partial_u^r|_{u=0}$ gives, for $1\leq r\leq k\leq n-1$,
\begin{align*}
  &d\mathsf A_{r,t}^{[k]}\\
  ={}&r\sum_{\substack{0\leq q\leq k,\ 0\leq\ell\leq k-q\\q+\ell\geq1}}\frac1{q!\ell!}
    \operatorname{Sym}_r\!\left(a_t^{[\ell,q]}
      [\mathsf B_{\ell,r-1,t}^{[k-q]}]\right)dt\\
  &+\binom r2\sum_{q=2}^k\sum_{\ell=0}^{k-q}\frac1{q!\ell!}
    \operatorname{Sym}_r\!\left(\beta_t^{[\ell,q]}
      [\mathsf B_{\ell,r-2,t}^{[k-q]}]\right)dt\\
  &-\frac{\lambda^2}{2}\sum_{\substack{k_1,k_2\geq2\\k_1+k_2=k+2}}\sum_{p=0}^r
    \sum_{\substack{q_1,\ell\geq0\\q_1+\ell\leq k_1}}\sum_{\substack{q_2,j\geq0\\q_2+j\leq k_2}}
    \frac{1}{q_1!\ell!q_2!j!}\binom rp\\
    &\times \operatorname{Sym}_r\!\left(R(t)^{-1}\left[
    h_t^{[\ell,q_1]}[\mathsf B_{\ell,p,t}^{[k_1-q_1]}],
    h_t^{[j,q_2]}[\mathsf B_{j,r-p,t}^{[k_2-q_2]}]
    \right]\right)dt\\
  &+\lambda\sum_{q=0}^{k+1}\sum_{\ell=0}^{k+1-q}\frac1{q!\ell!}
    (R(t)^{-1}d\nu_t)\Biggl[h_t^{[\ell,q]}
      [\mathsf B_{\ell,r,t}^{[k+1-q]}]\Biggr].
\end{align*}
We can rewrite the third term as
\begin{align*}
  &\sum_{\substack{k_1,k_2\geq2\\k_1+k_2=k+2}}\sum_{p=0}^r
    \sum_{\substack{q_1,\ell\geq0\\q_1+\ell\leq k_1}}\sum_{\substack{q_2,j\geq0\\q_2+j\leq k_2}}
    \frac{1}{q_1!\ell!q_2!j!}\binom rp\\
    &\times \operatorname{Sym}_r\!\left(R(t)^{-1}\left[
    h_t^{[\ell,q_1]}[\mathsf B_{\ell,p,t}^{[k_1-q_1]}],
    h_t^{[j,q_2]}[\mathsf B_{j,r-p,t}^{[k_2-q_2]}]
    \right]\right)\\
  ={}&2r\sum_{q=0}^k\sum_{\ell=0}^{k-q}\frac1{q!\ell!}
    \operatorname{Sym}_r\!\left(R(t)^{-1}\left[
    h_t^{[\ell,q]}[\mathsf B_{\ell,r-1,t}^{[k-q]}],
    h_t^{[1,0]}[\mathsf B_{1,1,t}^{[2]}]\right]\right)\\
  &+\sum_{\substack{k_1,k_2\geq2\\k_1+k_2=k+2}}
    \sum_{\substack{0\leq p\leq r\\(k_1,p)\ne(2,1)\\(k_2,r-p)\ne(2,1)}}
    \sum_{\substack{q_1,\ell\geq0\\q_1+\ell\leq k_1}}\sum_{\substack{q_2,j\geq0\\q_2+j\leq k_2}}
    \frac{1}{q_1!\ell!q_2!j!}\binom rp\\
  &\quad\times\operatorname{Sym}_r\!\left(R(t)^{-1}\left[
    h_t^{[\ell,q_1]}[\mathsf B_{\ell,p,t}^{[k_1-q_1]}],
    h_t^{[j,q_2]}[\mathsf B_{j,r-p,t}^{[k_2-q_2]}]
    \right]\right).
\end{align*}
Here, we have separated $(k_1,p)=(2,1)$ and $(k_2,r-p)=(2,1)$, for which $\mathsf B_{j,1,t}^{[2-q_2]}=0$ unless $(q_2,j)=(0,1)$. The two contributions agree after symmetrization, and they do not overlap since $r<k$. 

By Remark \ref{remark:Bell-vanishing}, the remaining sums may be restricted to $k_1\geq p+2$ and $k_2\geq r-p+2$.
Further recalling $\mathsf B_{1,r,t}^{[k]}=\mathsf A_{r+1,t}^{[k]}$ and separating the terms with $(q,\ell)=(0,1)$ in the drift and innovation coefficient give the desired result. The ranges of the sums are reduced by Remark \ref{remark:Bell-vanishing}.
\end{proof}

Now, we observe that $\mathsf{A}_{r,t}^{[k]}$ can be obtained successively using Theorem \ref{theorem:coefficient-tower}. To this end, define the degree of $\mathsf{A}_{r}^{[k]}$ by
\begin{align*}
  {\rm deg}\mathsf{A}_{r}^{[k]}=k-r+1.
\end{align*}
Due to Theorem \ref{theorem:A-high-rank-vanishing}, the following $m+1$ coefficients can be nonzero at degree $m$ ($m\geq 1$):
\begin{align}
  \mathsf{A}_{1}^{[m]},~\mathsf{A}_{2}^{[m+1]},~\cdots,~\mathsf{A}_{m+1}^{[2m]}.\label{eq:element-Am}
\end{align}
Note that degree \(m\) corresponds to order \(\epsilon^{m-1}\) in the expansion of the conditional cumulant of $\frac{X_t^\epsilon-X_t^{[0]}}{\epsilon}$. Let us denote the set of these processes of degree $m$ by $\mathcal{A}_m$. Below, we will show that they are precisely the variables needed to compute $\mathsf{A}_{1}^{[m]}$, the coefficient of $\epsilon^m$ in the conditional expectation of $X_t^\epsilon$.

Furthermore, define the degree of $\mathsf B_{\ell,r}^{[k]}~(\ell\geq1,\ k\geq\ell+r)$ by the highest degree of the $\mathsf A$ coefficients appearing in the right-hand side of \eqref{eq:expression-B}, including those that vanish. Specifically, we can write
\begin{align*}
  {\rm deg}\mathsf B_{\ell,r}^{[k]}=k-\ell-r+1
\end{align*}
since the term with $s=1$ is $\mathsf A_{\ell+r}^{[k]}$, and the indices in the sum in \eqref{eq:expression-B} satisfy
\begin{align*}
  q_\nu-a_\nu-b_\nu\geq0,\qquad
  \sum_{\nu=1}^s (q_\nu-a_\nu-b_\nu)=k-\ell-r.
\end{align*}
Based on this observation, we reach the following result.
\begin{theorem}\label{theorem:succesive-structure}
  Each coefficient $\mathsf A$ or $\mathsf B$ appearing on the right-hand side of the equation for $\mathsf A_{r,t}^{[k]}$ in Theorem \ref{theorem:coefficient-tower} has degree at most $k-r+1$. Furthermore, each has degree at most $k-r$, except for $\mathsf A_{r,t}^{[k]}$ itself and the new term $\mathsf A_{r+1,t}^{[k+1]}$.

  As a result, for $m\geq2$, each term of $\mathcal{A}_m$ can be obtained in the reverse order of \eqref{eq:element-Am} if $\mathcal{A}_1,\ldots,\mathcal{A}_{m-1}$ are given.
\end{theorem}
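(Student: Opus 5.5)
The proof is a direct degree count on each term of the equation in Theorem~\ref{theorem:coefficient-tower}, using the formula $\deg\mathsf B_{\ell,r}^{[k]}=k-\ell-r+1$ and $\deg\mathsf A_r^{[k]}=k-r+1$. In each $\mathsf B$ term the upper index $k'$ and the first index $\ell$ are tied to the summation indices $q,\ell$, and we simply read off the resulting degree. Only $\mathsf B$'s with $k'\geq\ell+r'$ have a defined degree. Those with $k'<\max\{\ell+r',2r'\}$ vanish by Remark~\ref{remark:Bell-vanishing} and can be discarded. Throughout, ``coefficient appearing'' means each $\mathsf A$ factor inside the polynomial $\mathsf B$, whose degrees are bounded by $\deg\mathsf B$ by definition.

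We go through the terms in order.

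\emph{Linear drift.} This term contains $\mathsf A_r^{[k]}$, of degree $k-r+1$, and $\mathsf A_2^{[2]}$, of degree $1\leq k-r$ since $r<k$.

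\emph{Second drift term.} Here $\mathsf B_{\ell,r-1}^{[k-q]}$ has degree $k-q-\ell-r+2$, which is at most $k-r$ because $q+\ell\geq2$. When $\ell=0$ the factor is $\mathsf B_{0,r-1}^{[k-q]}=\mathbf 1_{\{k-q=r-1=0\}}$, a constant involving no coefficients. The accompanying $\mathsf A_2^{[2]}$ again has degree $1$.

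\emph{$\beta$ term.} The factor $\mathsf B_{\ell,r-2}^{[k-q]}$ has degree $k-q-\ell-r+3\leq k-r$ since $q+\ell\geq3$.

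\emph{Quadratic observation term.} The two factors have degrees $k_1-q_1-\ell-p+1$ and $k_2-q_2-j-(r-p)+1$. Each is at most $k_1-p+1$ and $k_2-r+p+1$ respectively. Using $k_1+k_2=k+2$ and the constraints $k_2\geq r-p+2$ and $k_1\geq p+2$, we get
\[
k_1-p+1=k+3-k_2-p\leq k+1-r,
\]
with equality only if $k_2=r-p+2$ and $q_1=\ell=0$. In that case the factor is $\mathsf B_{0,p}^{[k_1]}$, which is constant (and zero, since $k_1\geq2$). Otherwise the degree is at most $k-r$, and the second factor is handled symmetrically. The main care is exactly here: checking that the extremal index configurations only produce the constant $\mathsf B_{0,\cdot}$ factors or the separated terms.

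\emph{Innovation term.} The new variable $\mathsf A_{r+1}^{[k+1]}$ has degree $k-r+1$. The other factors $\mathsf B_{\ell,r}^{[k+1-q]}$ with $\ell\geq1$ have degree $k-q-\ell-r+2\leq k-r$ since $q+\ell\geq2$.

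This proves the first assertion. For the recursive statement, let $m\geq2$ and suppose $\mathcal A_1,\dots,\mathcal A_{m-1}$ are known. The equation for $\mathsf A_r^{[k]}\in\mathcal A_m$ is then linear in $\mathsf A_r^{[k]}$, with a stochastic coefficient involving $\mathsf A_{r+1}^{[k+1]}$, which also has degree $m$. All other terms are known.

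The recursion starts from the top element $\mathsf A_{m+1}^{[2m]}$. Its equation involves $\mathsf A_{m+2}^{[2m+1]}$, which vanishes by Theorem~\ref{theorem:A-high-rank-vanishing} because $2m+1<2(m+2)-2$. Its equation therefore closes as a linear SDE with known inputs. This requires $r=m+1<k=2m$, which holds for $m\geq2$. Proceeding downward, $r=m,\dots,1$, each equation is a linear SDE driven by an already determined process, so the elements are obtained in the reverse order of \eqref{eq:element-Am}.
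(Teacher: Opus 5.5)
Your proposal is correct and follows essentially the same route as the paper. Both do a term-by-term degree count using $\deg\mathsf A_r^{[k]}=k-r+1$ and $\deg\mathsf B_{\ell,r}^{[k]}=k-\ell-r+1$, treat the factors $\mathsf B_{0,s}$ as constants, and close the top equation via $\mathsf A_{m+2,t}^{[2m+1]}=0$ before solving downward in rank; the only cosmetic difference is in the quadratic observation term, where you bound by $k_1-p+1$ and handle the equality case through $\ell=0$, whereas the paper uses $\ell\geq1$ to get $k_1-q_1-\ell-p+1\leq k_1-p\leq m-1$ directly.
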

\begin{proof}
  Put $m=k-r+1>1$. The degrees of the quantities on the right-hand side of Theorem \ref{theorem:coefficient-tower} are bounded as follows. When two factors are listed, the bound applies to each factor. Factors with first index zero satisfy $\mathsf B_{0,s,t}^{[v]}=\mathbf1_{\{s=v=0\}}$ and hence do not affect the degree bounds.
  \begin{center}
    \renewcommand{\arraystretch}{1.5}
    \setlength{\tabcolsep}{4pt}
    \begin{tabular}{c|c|c|c|c}
      No. & Quantity & Range & Degree & Bound \\ \hline
      (1) & $\mathsf A_{2,t}^{[2]}$ & & $1$ & $1$ \\ \hline
      (2) & $\mathsf A_{r,t}^{[k]}$ & & $k-r+1$ & $m$ \\ \hline
      (3) & $\mathsf B_{\ell,r-1,t}^{[k-q]}$
        & $q\geq0,\ \ell\geq1,\ q+\ell\geq2$ & $k-q-\ell-r+2$ & $m-1$ \\ \hline
      (4) & $\mathsf B_{\ell,r-2,t}^{[k-q]}$
        & $r\geq2,\ q\geq2,\ \ell\geq1$ & $k-q-\ell-r+3$ & $m-1$ \\ \hline
      (5) & $\begin{gathered}\mathsf B_{\ell,p,t}^{[k_1-q_1]}\\\mathsf B_{j,r-p,t}^{[k_2-q_2]}\end{gathered}$
        & $\begin{gathered}k_1\geq p+2,\ k_2\geq r-p+2\\k_1+k_2=k+2\\q_1,q_2\geq0,\ \ell,j\geq1\end{gathered}$
        & $\begin{gathered}k_1-q_1-\ell-p+1\\k_2-q_2-j-r+p+1\end{gathered}$ & $m-1$ \\ \hline
      (6) & $\mathsf A_{r+1,t}^{[k+1]}$ & & $k-r+1$ & $m$ \\ \hline
      (7) & $\mathsf B_{\ell,r,t}^{[k+1-q]}$
        & $q\geq0,\ \ell\geq1,\ q+\ell\geq2$ & $k-q-\ell-r+2$ & $m-1$ \\ \hline
    \end{tabular}
  \end{center}
  In row (5), the degrees of the two $\mathsf B$ factors satisfy
  \begin{align*}
    k_1-q_1-\ell-p+1&\leq k_1-p=m-(k_2-r+p-1)\leq m-1,\\
    k_2-q_2-j-r+p+1&\leq k_2-r+p=m-(k_1-p-1)\leq m-1.
  \end{align*}
  Rows (1), (2), and (6) follow from the definition of the degree of $\mathsf A$, while rows (3), (4), and (7) follow from ${\rm deg}\mathsf B_{\ell,r}^{[k]}=k-\ell-r+1$.
  Finally, Theorem \ref{theorem:A-high-rank-vanishing} gives $\mathsf A_{m+2,t}^{[2m+1]}=0$. Thus the equation for $\mathsf A_{m+1,t}^{[2m]}$ involves no new coefficient of degree $m$, and the remaining equations can be solved successively in decreasing order of the rank.
\end{proof}

For practical computation, intermediate symmetrizations can be deferred until after all contractions have been performed. For $\ell\geq1$ and $r,k\geq0$, define 
  \begin{align*}
    \bar{\mathsf B}_{\ell,r,t}^{[k]}=\ell!r!\sum_{s=1}^{\ell}\frac1{s!}
    \sum_{\substack{
      a_\nu\geq1,\ b_\nu\geq0,\ q_\nu\geq a_\nu+b_\nu\ (1\leq\nu\leq s)\\
      \sum\limits_{\nu=1}^s a_\nu=\ell,\ \sum\limits_{\nu=1}^s b_\nu=r,\ \sum\limits_{\nu=1}^s q_\nu=k}} \sideset{}{_{(a_1,\cdots,a_s)}}\bigotimes_{\nu=1}^s
    \frac{\mathsf A_{a_\nu+b_\nu,t}^{[q_\nu]}}{a_\nu!b_\nu!}.
  \end{align*}
  and for $\ell=0$
  \begin{align*}
    \bar{\mathsf B}_{0,r,t}^{[k]}=\mathbf1_{\{k=r=0\}}.
  \end{align*}
  By Remark \ref{remark:B-expression}, $\mathsf B_{\ell,r,t}^{[k]}$ is the symmetrization of $\bar{\mathsf B}_{\ell,r,t}^{[k]}$ over the first $\ell$ and last $r$ indices separately.

  Thus, for a smooth scalar-valued function $\phi:\mathbb{R}^{d_1}\to \mathbb{R}$, we have
  \begin{align*}
    \partial^\ell \phi(X_t^{[0]})[\mathsf B_{\ell,r,t}^{[k]}]={\rm Sym}_r\left\{ \partial^\ell \phi(X_t^{[0]})[\bar{\mathsf B}_{\ell,r,t}^{[k]}] \right\},
  \end{align*}
  since $\partial^\ell \phi(X_t^{[0]})$ is symmetric. Applying this identity componentwise gives the following corollary.
  \begin{corollary}\label{cor:cumulant-tower2}
    For integers $1\leq r<k$, we have
    \begin{align*}
    &d\mathsf A_{r,t}^{[k]}\\
    ={}&\operatorname{Sym}_r\Biggl(r\bigl\{a_t^{[1,0]}
      -\lambda^2\mathsf A_{2,t}^{[2]}(h_t^{[1,0]})^\top
        R(t)^{-1}h_t^{[1,0]}\bigr\}
        [\mathsf A_{r,t}^{[k]}]dt\\
    &+r\sum_{\substack{q,\ell\geq0\\2\leq q+\ell\leq k-r+1}}\frac1{q!\ell!}
      \bigl\{a_t^{[\ell,q]}
        -\lambda^2\mathsf A_{2,t}^{[2]}(h_t^{[1,0]})^\top R(t)^{-1}h_t^{[\ell,q]}\bigr\}
        [\bar{\mathsf B}_{\ell,r-1,t}^{[k-q]}]dt\\
    &+\binom r2\sum_{\substack{q\geq2,\ \ell\geq0\\3\leq q+\ell\leq k-r+2}}\frac1{q!\ell!}
      \beta_t^{[\ell,q]}
        [\bar{\mathsf B}_{\ell,r-2,t}^{[k-q]}]dt\\
    &-\frac{\lambda^2}{2}\sum_{p=0}^r\binom rp
      \sum_{\substack{k_1\geq p+2,\ k_2\geq r-p+2\\k_1+k_2=k+2}}
      \sum_{\substack{q_1,\ell\geq0\\q_1+\ell\leq k_1-p}}\quad
      \sum_{\substack{q_2,j\geq0\\q_2+j\leq k_2-r+p}}\frac1{q_1!\ell!q_2!j!}\\
    &\quad\times R(t)^{-1}\Bigl[
      h_t^{[\ell,q_1]}[\bar{\mathsf B}_{\ell,p,t}^{[k_1-q_1]}],
      h_t^{[j,q_2]}[\bar{\mathsf B}_{j,r-p,t}^{[k_2-q_2]}]
      \Bigr]dt\\
    &+\lambda\bigl(R(t)^{-1}d\nu_t\bigr)\Biggl[
      h_t^{[1,0]}[\mathsf A_{r+1,t}^{[k+1]}]
      +\sum_{\substack{q\geq0,\ \ell\geq1\\2\leq q+\ell\leq k-r+1}}\frac1{q!\ell!}
      h_t^{[\ell,q]}
        [\bar{\mathsf B}_{\ell,r,t}^{[k+1-q]}]\Biggr]\Biggr).
  \end{align*}
\end{corollary}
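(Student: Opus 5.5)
The corollary is a rewriting of Theorem \ref{theorem:coefficient-tower}, so I would start from that equation and replace every symmetrized $\mathsf B$ by the unsymmetrized $\bar{\mathsf B}$. The outer symmetrizations $\operatorname{Sym}_r$ would then be pulled out to a single one. The key input is the identity stated just before the corollary. By Remark \ref{remark:B-expression}, $\mathsf B_{\ell,r,t}^{[k]}$ is the symmetrization of $\bar{\mathsf B}_{\ell,r,t}^{[k]}$ separately over its first $\ell$ indices and its last $r$ indices. Contracting the first $\ell$ slots with a tensor that is symmetric in those slots makes the symmetrization over them disappear. Only the symmetrization over the remaining $r$ free indices survives. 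I would apply this componentwise to $\phi=a_i,\ h_i,\ \beta_{ij}$ evaluated at $(X_t^{[0]},0)$, and to their $\epsilon$-derivatives. Since $\partial_\epsilon^q$ commutes with $\partial_x^\ell$, each $a_t^{[\ell,q]}$, $h_t^{[\ell,q]}$, $\beta_t^{[\ell,q]}$ is symmetric in its $\ell$ input slots. The same holds for the combination $a_t^{[\ell,q]}-\lambda^2\mathsf A_{2,t}^{[2]}(h_t^{[1,0]})^\top R(t)^{-1}h_t^{[\ell,q]}$, because the prefactor acts only on the output index.

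Next I would go term by term. In the drift terms, $a_t^{[\ell,q]}[\mathsf B_{\ell,r-1,t}^{[k-q]}]$ equals a partial symmetrization over $r-1$ indices of $a_t^{[\ell,q]}[\bar{\mathsf B}_{\ell,r-1,t}^{[k-q]}]$. The output index of $a$ sits alongside these $r-1$ indices. Since $\operatorname{Sym}_r\circ(\mathrm{id}\otimes\operatorname{Sym}_{r-1})=\operatorname{Sym}_r$, the inner symmetrization is absorbed. The $\beta$ term works the same way, using $\operatorname{Sym}_r\circ(\mathrm{id}_2\otimes\operatorname{Sym}_{r-2})=\operatorname{Sym}_r$.

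The quadratic $R^{-1}[\cdot,\cdot]$ term has two factors. They carry partial symmetrizations over disjoint index groups of sizes $p$ and $r-p$. Their product is a symmetrization over the subgroup $\mathfrak S_p\times\mathfrak S_{r-p}$, which is absorbed by $\operatorname{Sym}_r$. For the innovation term, $h_t^{[\ell,q]}[\mathsf B_{\ell,r,t}^{[k+1-q]}]$ is already symmetric in its $r$ indices after the reduction. The term $h_t^{[1,0]}[\mathsf A_{r+1,t}^{[k+1]}]$ is symmetric because $\mathsf A$ is a symmetric cumulant tensor, so wrapping both in $\operatorname{Sym}_r$ changes nothing. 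The first term $\{\cdots\}[\mathsf A_{r,t}^{[k]}]$ contains no $\mathsf B$ and is unchanged. Finally, by linearity, all the outer $\operatorname{Sym}_r$ can be collected into one around the whole right-hand side, with the $dt$ and $d\nu_t$ differentials, which are scalar or index-free, kept inside.

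The main point needing care is index bookkeeping. One has to check that the ordering in $\bigotimes_{(a_1,\ldots,a_s)}$ places exactly the $\ell$ "$x$-slots" first, so that they are the slots contracted with the derivative tensor. One also has to check that the free index positions produced after contraction match those on which the outer $\operatorname{Sym}_r$ acts. Once this is confirmed from the definition of $\otimes_{(a,b)}$ and Definition \ref{definition:tensor-contraction}, the remaining steps are the elementary subgroup-absorption identities for symmetrizers. The case $\ell=0$ is trivial, since $\bar{\mathsf B}_{0}=\mathsf B_{0}$.
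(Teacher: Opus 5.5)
Your proposal is correct and follows the paper's own argument. Remark~\ref{remark:B-expression} writes $\mathsf B_{\ell,r,t}^{[k]}$ as the separate symmetrization of $\bar{\mathsf B}_{\ell,r,t}^{[k]}$; the symmetric derivative tensors $a_t^{[\ell,q]}$, $h_t^{[\ell,q]}$, $\beta_t^{[\ell,q]}$ remove the symmetrization over the contracted slots; and the remaining partial symmetrizations are absorbed into a single outer $\operatorname{Sym}_r$. The paper states this in one line, and your term-by-term subgroup-absorption checks simply supply the details, including the innovation term, which carries no $\operatorname{Sym}_r$ in Theorem~\ref{theorem:coefficient-tower} but is already symmetric.
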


In particular, the Kalman--Bucy filter corresponds to the degree-one coefficients $\mathsf A_{1,t}^{[1]}$ and $\mathsf A_{2,t}^{[2]}$.
\begin{corollary}\label{corollary:Kalman-tower}
  The two coefficients in $\mathcal A_1$ satisfy
\begin{align*}
  d\mathsf A_{1,t}^{[1]}
  ={}&\left\{a_t^{[1,0]}\mathsf A_{1,t}^{[1]}+a_t^{[0,1]}\right\}dt
    +\lambda\mathsf A_{2,t}^{[2]}{h_t^{[1,0]}}^\top
      R(t)^{-1}d\nu_t,\\
  d\mathsf A_{2,t}^{[2]}
  ={}&\operatorname{Sym}_2\Bigl(
      2a_t^{[1,0]}[\mathsf A_{2,t}^{[2]}]
      +\frac12\beta_t^{[0,2]}
      -\lambda^2R(t)^{-1}\left[
      h_t^{[1,0]}[\mathsf A_{2,t}^{[2]}],
      h_t^{[1,0]}[\mathsf A_{2,t}^{[2]}]
      \right]\Bigr)dt\\
  ={}&\Bigl\{a_t^{[1,0]}\mathsf A_{2,t}^{[2]}
      +\mathsf A_{2,t}^{[2]}{a_t^{[1,0]}}^\top
      +\frac12\beta_t^{[0,2]}
      -\lambda^2\mathsf A_{2,t}^{[2]}{h_t^{[1,0]}}^\top
      R(t)^{-1}h_t^{[1,0]}\mathsf A_{2,t}^{[2]}\Bigr\}dt.
\end{align*}
\end{corollary}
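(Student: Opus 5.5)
Theorem \ref{theorem:coefficient-tower} assumes $r<k$, so it does not cover $\mathsf A_{1,t}^{[1]}$ ($r=k=1$) or $\mathsf A_{2,t}^{[2]}$ ($r=k=2$). I will therefore not use its final form. Instead I go back to the intermediate identity obtained in its proof, before the third term was split and before the $(q,\ell)=(0,1)$ terms were separated. That identity was derived for all $1\leq r\leq k\leq n-1$ by comparing coefficients of $z^k$ in $dK_t(u,z)$, using Propositions \ref{prop:evolution-of-K}, \ref{prop:R-Bell} and \ref{prop:tensor-contraction}. I fix $n\geq3$ and specialize it to $(r,k)=(1,1)$ and $(2,2)$. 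For each $\mathsf B$ that appears I use only three facts:
\begin{itemize}[label={}, leftmargin=0pt]
\item $\mathsf B_{0,s,t}^{[v]}=\mathbf 1_{\{s=v=0\}}$;
\item $\mathsf B_{1,s,t}^{[v]}=\mathsf A_{s+1,t}^{[v]}$ (Proposition \ref{prop:Bell-recurrence});
\item $\mathsf B_{\ell,s,t}^{[v]}=0$ whenever $\ell\geq1$ and $v<\max\{\ell+s,2s\}$ (Remark \ref{remark:Bell-vanishing}).
\end{itemize}

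\emph{Case $r=k=1$.}
\begin{itemize}[label={}, leftmargin=0pt]
\item Drift, $a$-term: the sum runs over $q+\ell\geq1$ with $q+\ell\leq1$. The pair $(q,\ell)=(1,0)$ gives $a_t^{[0,1]}\mathsf B_{0,0,t}^{[0]}=a_t^{[0,1]}$. The pair $(0,1)$ gives $a_t^{[1,0]}[\mathsf B_{1,0,t}^{[1]}]=a_t^{[1,0]}\mathsf A_{1,t}^{[1]}$.
\item Drift, $\beta$-term: its prefactor is $\binom12=0$.
\item Drift, $\lambda^2$-term: it needs $k_1,k_2\geq2$ with $k_1+k_2=3$, which is impossible, so it vanishes.
\item Innovation: the summands are $h_t^{[\ell,q]}[\mathsf B_{\ell,1,t}^{[2-q]}]$ with $q+\ell\leq2$.
  \begin{itemize}[label={}, leftmargin=1.5em]
  \item $\ell=0$ gives $\mathsf B_{0,1}=0$.
  \item $(q,\ell)=(1,1)$ gives $\mathsf B_{1,1}^{[1]}=0$, since $1<2$.
  \item $(0,2)$ gives $\mathsf B_{2,1}^{[2]}=0$, since $2<3$.
  \item Only $(0,1)$ survives, contributing $\lambda(R(t)^{-1}d\nu_t)[h_t^{[1,0]}[\mathsf A_{2,t}^{[2]}]]=\lambda\mathsf A_{2,t}^{[2]}{h_t^{[1,0]}}^\top R(t)^{-1}d\nu_t$.
  \end{itemize}
\end{itemize}
Here I use that $\mathsf A_{2,t}^{[2]}$ is symmetric. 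This is the first equation.

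\emph{Case $r=k=2$.}
\begin{itemize}[label={}, leftmargin=0pt]
\item Drift, $a$-term: the summands involve $\mathsf B_{\ell,1,t}^{[2-q]}$. Every choice other than $(q,\ell)=(0,1)$ is killed by the vanishing rules: $\ell=0$, or $2-q<\max\{\ell+1,2\}$. The surviving term is $2\,\mathrm{Sym}_2(a_t^{[1,0]}[\mathsf A_{2,t}^{[2]}])$.
\item Drift, $\beta$-term: it has prefactor $\binom22=1$ and forces $q=2$, $\ell=0$. It gives $\frac12\beta_t^{[0,2]}$.
\item Drift, $\lambda^2$-term: here $k_1=k_2=2$. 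For $p=0$ or $p=2$, one factor is $\mathsf B_{j,2,t}^{[2-q]}$, which needs superscript $\geq4$ and so vanishes. For $p=1$, each factor $\mathsf B_{\ell,1,t}^{[2-q_1]}$ needs $2-q_1\geq\max\{\ell+1,2\}$. This forces $q_1=0$, $\ell=1$, and likewise for the second factor. The coefficient is $-\frac{\lambda^2}{2}\binom21=-\lambda^2$, which gives $-\lambda^2\,\mathrm{Sym}_2(R(t)^{-1}[h_t^{[1,0]}[\mathsf A_{2,t}^{[2]}],h_t^{[1,0]}[\mathsf A_{2,t}^{[2]}]])$.
\item Innovation: every summand contains $\mathsf B_{\ell,2,t}^{[3-q]}$. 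These vanish, since either $\ell=0$ or $3-q<4$. So there is no $d\nu$ term.
\end{itemize}
Finally, I write out $\mathrm{Sym}_2$ in index form. Using the symmetry of $\mathsf A_{2,t}^{[2]}$, of $\beta_t^{[0,2]}=\partial_\epsilon^2(bb^\top)$ and of $R(t)^{-1}$, this turns the expression into the matrix Riccati form $a\mathsf A+\mathsf A a^\top+\frac12\beta^{[0,2]}-\lambda^2\mathsf A h^\top R^{-1}h\mathsf A$.

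The main point to check is that the intermediate identity in the proof of Theorem \ref{theorem:coefficient-tower} really holds for $r=k$. Its derivation only compares $z^k$ coefficients and never uses $r<k$; that hypothesis enters only in the later splitting of the $\lambda^2$-term, which I bypass. So I will re-read that step carefully. The rest is bookkeeping with the vanishing rules listed above.
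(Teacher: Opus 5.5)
Your proposal is correct and follows the derivation the paper intends, though the paper states the corollary without proof: you specialize the coefficient-comparison identity from the proof of Theorem~\ref{theorem:coefficient-tower}, which holds for $1\leq r\leq k\leq n-1$, to $(r,k)=(1,1)$ and $(2,2)$, and use the vanishing rules for $\mathsf B$ to remove the other terms. Your decision to bypass the final form of that theorem is well founded, because setting $r=k$ there would drop $a_t^{[0,1]}$ and $\frac12\beta_t^{[0,2]}$, add a spurious drift $-\lambda^2\mathsf A_{2,t}^{[2]}{h_t^{[1,0]}}^\top R(t)^{-1}h_t^{[1,0]}\mathsf A_{1,t}^{[1]}$, and double the Riccati term.
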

In particular, in the case of a linear Gaussian model $(\partial_\epsilon a=\partial_\epsilon^2b=\partial_\epsilon h=0)$, our expansion stops at degree 1 and the result coincides with the exact Kalman--Bucy filter.

\begin{corollary}
  For $m\geq1$, the number of scalar components of $X_t^{[0]}$ and $\mathsf A_{r,t}^{[k]}$ needed for the expansion up to degree $m$ is at most
  \begin{align*}
    d_1+\sum_{j=1}^m\sum_{r=1}^{j+1}\binom{d_1+r-1}{r}
    =\binom{d_1+m+2}{m+1}-m-2.
  \end{align*}
\end{corollary}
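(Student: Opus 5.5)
The count comes from listing exactly which processes must be propagated and then bounding each tensor by its number of independent components. The deterministic process $X_t^{[0]}$ contributes $d_1$ scalars. By Theorem \ref{theorem:succesive-structure}, computing all coefficients up to degree $m$ only requires the sets $\mathcal A_1,\ldots,\mathcal A_m$. The reason is that the equations for the elements of $\mathcal A_j$ involve only $X_t^{[0]}$ (through the $a_t^{[\ell,q]},\beta_t^{[\ell,q]},h_t^{[\ell,q]}$), coefficients of lower degree (through the $\mathsf B$'s, which are polynomials in $\mathsf A$'s of degree at most $j-1$), and elements of $\mathcal A_j$ itself. By Theorem \ref{theorem:A-high-rank-vanishing} and \eqref{eq:element-Am}, $\mathcal A_j=\{\mathsf A_r^{[j+r-1]}:1\le r\le j+1\}$, so there are $j+1$ tensors at degree $j$. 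Each $\mathsf A_r^{[k]}$ is a symmetric $r$-tensor on $\mathbb R^{d_1}$, because it is a coefficient of a symmetric cumulant (equivalently of $\partial_u^r K_t$). A symmetric $r$-tensor has $\binom{d_1+r-1}{r}$ independent components. Summing over $r$ and $j$ gives the left-hand side.

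The remaining step is the closed form. Use the hockey-stick identity
\[
\sum_{r=0}^{j+1}\binom{d_1+r-1}{r}=\binom{d_1+j+1}{j+1}.
\]
Subtracting the $r=0$ term gives
\[
\sum_{r=1}^{j+1}\binom{d_1+r-1}{r}=\binom{d_1+j+1}{j+1}-1.
\]
Summing over $j=1,\ldots,m$ and setting $i=j+1$, the total becomes
\[
d_1+\sum_{i=2}^{m+1}\binom{d_1+i}{i}-m.
\]
Apply the hockey-stick identity again, $\sum_{i=0}^{m+1}\binom{d_1+i}{i}=\binom{d_1+m+2}{m+1}$, and remove the $i=0$ and $i=1$ terms, which are $1$ and $d_1+1$. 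This gives
\[
d_1+\binom{d_1+m+2}{m+1}-1-(d_1+1)-m=\binom{d_1+m+2}{m+1}-m-2,
\]
which is the claimed value.

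The only point that needs care is the justification that nothing beyond $\bigcup_{j\le m}\mathcal A_j$ and $X^{[0]}$ enters. This rests on the degree bookkeeping in Theorem \ref{theorem:succesive-structure} together with the fact that the equations for $\mathsf A_{1}^{[1]}$ and $\mathsf A_2^{[2]}$ (Corollary \ref{corollary:Kalman-tower}) close on $\mathcal A_1$. The bound is stated as ``at most'' because some components may vanish or coincide in special models.
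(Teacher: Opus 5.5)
Your proposal is correct and follows the same route the paper intends (the paper states this corollary without a separate proof): by Theorem~\ref{theorem:succesive-structure} and Theorem~\ref{theorem:A-high-rank-vanishing}, only $X_t^{[0]}$ and the $j+1$ symmetric tensors in each $\mathcal A_j$, $1\leq j\leq m$, need to be propagated, and each symmetric $r$-tensor has $\binom{d_1+r-1}{r}$ independent scalar components. Your two applications of the hockey-stick identity correctly evaluate the resulting sum to $\binom{d_1+m+2}{m+1}-m-2$.
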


\subsection*{Expansion of the conditional characteristic function}
Finally, we observe that the expansion of the conditional characteristic function of $(X_t^\epsilon-X_t^{[0]})/\epsilon$ is given by the expansion coefficients $A_{r,t}^{[k],\epsilon}=\mathsf A_{r,t}^{[k],\epsilon,\epsilon}$.

\begin{proposition}\label{prop:characteristic-expansion}
  For any $n\geq1$, $0\leq t\leq T$ and fixed $u\in\mathbb R^{d_1}$, we have
  \begin{align*}
    &E[e^{iu\cdot (X_t^\epsilon-X_t^{[0]})/\epsilon}|\mathcal{Y}_t^\epsilon]\\
    =&\exp\left(
      iA_{1,t}^{[1],\epsilon}[u]
      -\frac{1}{2}A_{2,t}^{[2],\epsilon}[u^{\otimes2}]
      \right)
      \left\{1+
      \sum_{k=1}^n\epsilon^k
      \sum_{r=1}^{3k}
      \frac{i^r}{r!}P_{r,t}^{[k+r],\epsilon}[u^{\otimes r}]
      \right\}+O_P^T(\epsilon^{n+1}),
  \end{align*}
  where, for $k\geq2$ and $r\geq1$,
  \begin{align*}
    P_{r,t}^{[k],\epsilon}
    ={}&r!\sum_{j=1}^{r\wedge \lfloor \frac{k}{2}\rfloor}\frac1{j!}
    \sum_{\substack{
      k_1,\ldots,k_j\geq2,\;k_1+\cdots+k_j=k\\
      1\leq r_\nu\leq\lfloor\frac{k_\nu+2}{2}\rfloor,\;(k_\nu,r_\nu)\ne(2,2)\ (1\leq\nu\leq j)\\
      r_1+\cdots+r_j=r}}
    \operatorname{Sym}_r\left(
      \bigotimes_{\nu=1}^j\frac{A_{r_\nu,t}^{[k_\nu],\epsilon}}{r_\nu!}
      \right).
  \end{align*}
  In particular, expansion coefficients up to $\epsilon^n$ can be constructed with $A_{r,t}^{[k],\epsilon}$ of degree at most $n+1$. 
\end{proposition}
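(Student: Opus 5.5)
The plan is to pass from the cumulant expansion to the characteristic function through the log-characteristic function, and then re-expand its exponential around the Gaussian part. Write $\phi^\epsilon(u)=E[e^{iu\cdot(X_t^\epsilon-X_t^{[0]})/\epsilon}\mid\mathcal Y_t^\epsilon]$. Its cumulants of rank $r$ are $\epsilon^{-r}$ times those of $X_t^\epsilon$, except that the first is shifted by $-X_t^{[0]}/\epsilon$. Hence, formally,
\begin{align*}
\log\phi^\epsilon(u)=\sum_{r\geq1}\frac{i^r}{r!}\sum_{k}A_{r,t}^{[k],\epsilon}[u^{\otimes r}]\epsilon^{k-r}.
\end{align*}
In this sum the $k=0$ term of $r=1$ cancels, and $A_{r}^{[k]}=0$ for $r>k$ by Proposition~\ref{prop:A-derivative-K}.

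Rather than work with an infinite log-series, I would argue directly through the generating function $K_t(u,z)$. With $\lambda=\epsilon$ and $z=\epsilon$, Proposition~\ref{prop:A-derivative-K} says that the coefficient of $z^k$ in $K_t(u/\epsilon\cdot,\,\cdot)$ is governed by $\mathsf A_{r,t}^{[k]}$. Concretely, I would consider $\tilde{\mathsf E}_t[\mathbb F_t(u/z,z)\mathbb J_t(z)]$ with $\mathbb X_t(z)/z=X^{[1]}+\tfrac z2X^{[2]}+\cdots$. Theorem~\ref{theorem:filter-expansion}, applied to the polynomial truncations of $e^{iu\cdot(\cdot)}$, then shows that $\phi^\epsilon(u)$ equals the normalized quantity $C_t(\epsilon)G_t(u/\epsilon,\epsilon)$ up to $O_P^T(\epsilon^{n+1})$ for fixed $u$. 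The exponential is not a polynomial, so I would handle it by Taylor truncation at an order $N$ large compared with $n$. The tail is controlled by $|e^{ix}-\sum_{\ell<N}(ix)^\ell/\ell!|\leq|x|^N/N!$ together with the moment bounds used after Proposition~4.5 of \citet{kurisaki2026ks}. Since $X^\epsilon-X^{[0]}=O(\epsilon)$ in $L^p$, the rescaled variable has bounded moments and the truncation error is harmless.

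Next, taking the logarithm as in Proposition~\ref{prop:K-mixed-cumulants}, I would obtain, modulo $\epsilon^{n+1}$,
\begin{align*}
\log\phi^\epsilon(u)\equiv iA_{1}^{[1]}[u]-\tfrac12A_2^{[2]}[u^{\otimes2}]+\sum_{(k,r)\neq(1,1),(2,2)}\frac{i^r}{r!}A_r^{[k]}[u^{\otimes r}]\epsilon^{k-r}.
\end{align*}
Theorem~\ref{theorem:A-high-rank-vanishing} restricts the sum to $k\geq2r-2$, that is, $r\leq\lfloor(k+2)/2\rfloor$. Every retained term with $(k,r)\neq(1,1),(2,2)$ carries the power $\epsilon^{k-r}$ with $k-r\geq1$. (Indeed $k=r$ forces $r\leq2$.) I would factor out the Gaussian exponential and expand the exponential of the remainder as $\sum_j\frac1{j!}(\cdots)^j$. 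Grouping the products $\bigotimes_\nu A_{r_\nu}^{[k_\nu]}$ by total rank $r=\sum r_\nu$ and total index $\sum k_\nu=k+r$ yields $P^{[k+r]}_r$ at power $\epsilon^{k}$, with the stated ranges. The constraint $j\leq\lfloor(k+r)/2\rfloor$ comes from $k_\nu\geq2$, and $j\leq r$ from $r_\nu\geq1$. At order $\epsilon^k$ we have $\sum(k_\nu-r_\nu)=k$ with each $k_\nu-r_\nu\geq r_\nu-2$, and since each factor has $k_\nu-r_\nu\geq1$ we get $r_\nu\leq k_\nu-r_\nu+2\leq3(k_\nu-r_\nu)$. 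This gives $r\leq3k$. The symmetrization appears because only the symmetric part survives contraction with $u^{\otimes r}$. The degree claim also follows: $\deg A_{r_\nu}^{[k_\nu]}=k_\nu-r_\nu+1\leq k+1\leq n+1$.

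The main obstacle is rigor in passing from the formal log/exp manipulations to an $O_P^T$ statement for fixed $u$. The rescaling by $1/\epsilon$ couples the $u$-degree to the $\epsilon$-order, and the cumulant expansion \eqref{eq:expansion-cumulant} only controls finitely many ranks. I would handle this by working entirely with the polynomial generating function $K_t(u/z,z)$ modulo $z^{n+1}$. Since $\mathbb X_t(z)/z$ has no negative powers, $\mathbb F$ stays polynomial in $z$ once the exponential is truncated at order $N\geq3n+2$. The identity $\exp(\log)$ then holds as a congruence of polynomials, and the only analytic inputs are Theorem~\ref{theorem:filter-expansion} and the Taylor-tail bound above.
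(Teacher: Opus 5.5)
There is a genuine gap, and it sits in the analytic reduction. You claim that $\phi^\epsilon(u)$ equals $C_t(\epsilon)G_t(u/\epsilon,\epsilon)$ up to $O_P^T(\epsilon^{n+1})$ once $e^{iu\cdot\bar X_t^\epsilon}$ is Taylor-truncated at a fixed order $N$, with $\bar X_t^\epsilon=(X_t^\epsilon-X_t^{[0]})/\epsilon$.

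That truncation error is not small. Your tail bound gives an error of size $E[|u\cdot\bar X_t^\epsilon|^N\mid\mathcal Y_t^\epsilon]/N!$. Since $\bar X_t^\epsilon$ is of order one, with the non-degenerate Gaussian $X_t^{[1]}$ as its leading part, this error is $O_P(1)$ for fixed $u$, not $O_P^T(\epsilon^{n+1})$. Bounded moments are exactly why it does not vanish as $\epsilon\to0$. The failure is already visible at order $\epsilon^0$. The truncated quantity has leading coefficient $\sum_{\ell<N}\frac1{\ell!}\tilde{\mathsf E}_t[(iu\cdot X_t^{[1]})^\ell]$, which is only a Taylor polynomial of $\psi_t^\epsilon(u)=\exp(iA_{1,t}^{[1],\epsilon}[u]-\frac12A_{2,t}^{[2],\epsilon}[u^{\otimes2}])$. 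The statement, by contrast, starts with $\psi_t^\epsilon(u)$ itself. For the same reason, your closing step ("$\exp(\log)$ as a congruence of polynomials modulo $z^{n+1}$") cannot work. After $u\mapsto u/z$, the $z^0$ term of the log is the Gaussian exponent, and its exponential is not a polynomial in $u$. Separately, $\log\phi^\epsilon(u)$ need not exist for fixed $u$, so any log-level identity should be stated only through Taylor coefficients at $u=0$.

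The missing idea is that the Gaussian factor must never be truncated. The paper writes $\bar X_t^\epsilon=X_t^{[1]}+R_t^\epsilon$ with $R_t^\epsilon=O(\epsilon)$. It Taylor-expands only $e^{iu\cdot R_t^\epsilon}$, which is legitimate and produces $F_t^{[k]}(u)$. It treats $\psi_t^\epsilon(u)^{-1}e^{iu\cdot X_t^{[1]}}$ through its Hermite expansion $\sum_p i^pH_p(X_t^{[1]}-A_{1,t}^{[1],\epsilon};A_{2,t}^{[2],\epsilon})[u^{\otimes p}]$, which is valid because $X_t^{[1]}$ is Gaussian under $\tilde{\mathsf E}_t^{\epsilon,\epsilon}$.

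Chaos bookkeeping then does the work. $F_t^{[k]}(u)$ lies in chaos of order at most $2k$, and the terms of $\mathbb J_t^{(n+1),\epsilon,\epsilon}(\epsilon)$ through order $\epsilon^n$ lie in chaos of order at most $n$. Orthogonality therefore kills every Hermite order $p>3n$ through order $\epsilon^n$. Hence $\psi^{-1}\phi$ (not $\phi$) agrees, modulo $O_P^T(\epsilon^{n+1})$, with its Taylor polynomial of degree $4n$ in $u$. The coefficients of that polynomial are finitely many conditional moments of $\bar X_t^\epsilon$. These are polynomials in the shifted cumulants, so finitely many applications of \eqref{eq:expansion-cumulant} suffice.

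From that point on, your algebra is correct and matches the paper's final step. This covers factoring out the Gaussian part, expanding the exponential of the remainder, grouping by $\sum r_\nu=r$ and $\sum k_\nu=k+r$, and deriving $r\le3k$ and the degree bound $n+1$ from Theorem~\ref{theorem:A-high-rank-vanishing}.
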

\begin{proof}
  Set
  \begin{align*}
    \bar{X}_t^\epsilon=\frac{X_t^\epsilon-X_t^{[0]}}{\epsilon},\qquad
    R_t^\epsilon=\bar{X}_t^\epsilon-X_t^{[1]},\qquad
    R_t^{\epsilon,(n)}=\sum_{k=2}^{n+1}\frac{\epsilon^{k-1}}{k!}X_t^{[k]},
  \end{align*}
  and
  \begin{align*}
    \psi_t^\epsilon(u)=\exp\left(
      iA_{1,t}^{[1],\epsilon}[u]
      -\frac{1}{2}A_{2,t}^{[2],\epsilon}[u^{\otimes2}]
      \right).
  \end{align*}
  Then, Proposition \ref{prop-expansion-of-SDE} and Theorem 4.10 in \citet{kurisaki2026ks} give
  \begin{align}
    &\psi_t^\epsilon(u)^{-1}E[e^{iu\cdot\bar X_t^\epsilon}|\mathcal Y_t^\epsilon]\nonumber\\
    ={}&E\left[\psi_t^\epsilon(u)^{-1}e^{iu\cdot X_t^{[1]}}
      \sum_{k=0}^n\frac{(iu\cdot R_t^{\epsilon,(n)})^k}{k!}
      \,\middle|\,\mathcal Y_t^\epsilon\right]+O_P^T(\epsilon^{n+1})\nonumber\\
    \begin{split}\label{eq:expansion-characteristic}
    ={}&\Biggl(\sum_{k=0}^{n}\tilde{\mathsf E}_t^{\epsilon,\epsilon}
      \left[\psi_t^\epsilon(u)^{-1}e^{iu\cdot X_t^{[1]}}
      F_t^{[k]}(u)
      \mathbb J_t^{(n+1),\epsilon,\epsilon}(\epsilon)\right]\epsilon^k\Biggr)\\
    &\quad\times\Biggl\{1+\sum_{j=1}^{n}(-1)^j
      \Bigl(\tilde{\mathsf E}_t^{\epsilon,\epsilon}
      [\mathbb J_t^{(n+1),\epsilon,\epsilon}(\epsilon)]-1\Bigr)^j\Biggr\}
      +O_P^T(\epsilon^{n+1}).
    \end{split}
  \end{align}
  where
  \begin{align*}
    &F_t^{[0]}(u)=1,\\
    &F_t^{[k]}(u)=\left.\frac1{k!}\partial_\epsilon^k e^{iu\cdot R_t^\epsilon}\right|_{\epsilon=0}
      =\sum_{m=1}^k\frac{i^m}{m!}
      \sum_{\substack{k_1,\ldots,k_m\geq1\\k_1+\cdots+k_m=k}}
      \prod_{j=1}^m\frac{u\cdot X_t^{[k_j+1]}}{(k_j+1)!}\qquad(k\geq1).
  \end{align*}
  Note also that $\psi_t^\epsilon(u)^{-1}$ is bounded since $A_{2,t}^{[2],\epsilon}$ is deterministic.

  Furthermore, since $X_t^{[1]}$ is Gaussian under $\tilde{\mathsf E}_t^{\epsilon,\epsilon}$ with
  \begin{align*}
    \tilde{\mathsf E}_t^{\epsilon,\epsilon}[X_t^{[1]}]=A_{1,t}^{[1],\epsilon},\qquad
    {\rm Cov}_{\tilde{\mathsf E}_t^{\epsilon,\epsilon}}[X_t^{[1]}]=A_{2,t}^{[2],\epsilon},
  \end{align*}
  we have the Hermite expansion
  \begin{align*}
    \psi_t^\epsilon(u)^{-1}e^{iu\cdot X_t^{[1]}}
    &=\exp\left(iu\cdot(X_t^{[1]}-A_{1,t}^{[1],\epsilon})
      +\frac12A_{2,t}^{[2],\epsilon}[u^{\otimes2}]\right)\\
    &=\sum_{p=0}^{\infty}i^p
      H_p(X_t^{[1]}-A_{1,t}^{[1],\epsilon};A_{2,t}^{[2],\epsilon})[u^{\otimes p}]
  \end{align*}
  in $L^2(\tilde{\mathsf E}_t^{\epsilon,\epsilon})$.
  Here the symmetric Hermite tensor $H_p(x;A_{2,t}^{[2],\epsilon})$ is defined by
  \begin{align*}
    \exp\left(u\cdot x-\frac12A_{2,t}^{[2],\epsilon}[u^{\otimes2}]\right)
    =\sum_{p=0}^{\infty}H_p(x;A_{2,t}^{[2],\epsilon})[u^{\otimes p}].
  \end{align*}
  Therefore, for $0\leq k\leq n$, conditional Cauchy--Schwarz gives
  \begin{align*}
    &\tilde{\mathsf E}_t^{\epsilon,\epsilon}\left[
      \psi_t^\epsilon(u)^{-1}e^{iu\cdot X_t^{[1]}}
      F_t^{[k]}(u)
      \mathbb J_t^{(n+1),\epsilon,\epsilon}(\epsilon)\right]\\
    &\quad=\sum_{p=0}^{\infty}i^p
      \tilde{\mathsf E}_t^{\epsilon,\epsilon}\left[
      H_p(X_t^{[1]}-A_{1,t}^{[1],\epsilon};A_{2,t}^{[2],\epsilon})[u^{\otimes p}]
      F_t^{[k]}(u)
      \mathbb J_t^{(n+1),\epsilon,\epsilon}(\epsilon)\right].
  \end{align*}
  Here, the sum can be restricted to $p\leq3n$ when retaining terms through order $\epsilon^n$. In fact, $F_t^{[k]}(u)$ belongs to the sum of the Wiener chaoses of orders up to $2k$ of $\{V_s- \tilde{\mathsf E}_t^{\epsilon,\epsilon}[V_s]\}_{0\leq s\leq t}$ under $\tilde{\mathsf E}_t^{\epsilon,\epsilon}$, and the terms of $\mathbb J_t^{(n+1),\epsilon,\epsilon}(\epsilon)$ through order $\epsilon^n$ belong to the sum of the Wiener chaoses of orders up to $n$, due to Lemma \ref{lemma-J-bb-expression} and the argument in the proof of Theorem \ref{theorem:A-high-rank-vanishing}. Thus, the orthogonality of Hermite polynomials shows that the coefficients up to $\epsilon^n$ in the summand are $0$ when $p>3n$.

  Thus, H\"older's inequality gives
  \begin{align*}
    &\sum_{p=0}^{\infty}\sum_{k=0}^{n}i^p
      \tilde{\mathsf E}_t^{\epsilon,\epsilon}\left[
      H_p(X_t^{[1]}-A_{1,t}^{[1],\epsilon};A_{2,t}^{[2],\epsilon})[u^{\otimes p}]
      F_t^{[k]}(u)
      \mathbb J_t^{(n+1),\epsilon,\epsilon}(\epsilon)\right]\epsilon^k\\
      =&\sum_{p=0}^{3n}\sum_{k=0}^{n}i^p\tilde{\mathsf E}_t^{\epsilon,\epsilon}\left[
      H_p(X_t^{[1]}-A_{1,t}^{[1],\epsilon};A_{2,t}^{[2],\epsilon})[u^{\otimes p}]
      F_t^{[k]}(u)
      \mathbb J_t^{(n+1),\epsilon,\epsilon}(\epsilon)\right]\epsilon^k+O_P^T(\epsilon^{n+1})\\
      =&\sum_{r=0}^{4n}\frac1{r!}\tilde{\mathsf E}_t^{\epsilon,\epsilon}\left[
       \left.\partial_u^r\left(
        \psi_t^\epsilon(u)^{-1}e^{iu\cdot X_t^{[1]}}e^{iu\cdot R_t^\epsilon}
      \right)\right|_{u=0}
      \mathbb J_t^{(n+1),\epsilon,\epsilon}(\epsilon)\right][u^{\otimes r}]+O_P^T(\epsilon^{n+1})\\
      =&\sum_{r=0}^{4n}\frac1{r!}\tilde{\mathsf E}_t^{\epsilon,\epsilon}\left[
       \left.\partial_u^r\left(
        \psi_t^\epsilon(u)^{-1}e^{iu\cdot \bar{X}_t^\epsilon}
      \right)\right|_{u=0}
      \mathbb J_t^{(n+1),\epsilon,\epsilon}(\epsilon)\right][u^{\otimes r}]+O_P^T(\epsilon^{n+1}),
  \end{align*}
  noting that we have
  \begin{align*}
    &\sup_{\substack{0<\epsilon<1\\0\leq t\leq T}}E\left[ \left|\left.\partial_u^r\left(
        \psi_t^\epsilon(u)^{-1}e^{iu\cdot X_t^{[1]}}
      \right)\right|_{u=0}\right|^p \right]<\infty,\\
    &\sup_{\substack{0<\epsilon<1\\0\leq t\leq T}}\frac{1}{\epsilon^{rp}}E[|\left.\partial_u^r\left(e^{iu\cdot R_t^\epsilon}\right)\right|_{u=0}|^p]<\infty,\\
    &\sup_{\substack{0<\epsilon<1\\0\leq t\leq T}}E[|\mathbb J_t^{(n+1),\epsilon,\epsilon}(\epsilon)|^p]<\infty,\\
    &\tilde{\mathsf E}_t^{\epsilon,\epsilon}\left[
       \left.\partial_u^r\left(
        \psi_t^\epsilon(u)^{-1}e^{iu\cdot X_t^{[1]}}
      \right)\right|_{u=0}
      \otimes\left.\partial_u^q\left(e^{iu\cdot R_t^\epsilon}\right)\right|_{u=0}\mathbb J_t^{(n+1),\epsilon,\epsilon}(\epsilon)\right]=O_P^T(\epsilon^{n+1}),
  \end{align*}
  for any $p\geq1$ and integer $r\geq0$ in the first three estimates, and integers $r>3n$, $0\leq q\leq n$ in the last estimate, and using the argument following Proposition 4.5 in \citet{kurisaki2026ks}.

  This implies that \eqref{eq:expansion-characteristic} agrees with the expansion of
  \begin{align}
    \sum_{r=0}^{4n}\frac1{r!}
      E\left[\left.\partial_u^r\left(
        \psi_t^\epsilon(u)^{-1}e^{iu\cdot \bar{X}_t^\epsilon}
      \right)\right|_{u=0}\,\middle|\,\mathcal Y_t^\epsilon\right]
      [u^{\otimes r}]+O_P^T(\epsilon^{n+1}).\label{eq:finite-characteristic-Taylor}
  \end{align}

  On the other hand, we have the Taylor expansion
  \begin{align*}
    &\log E[e^{iu\cdot\bar X_t^\epsilon}|\mathcal Y_t^\epsilon]
      -\log\psi_t^\epsilon(u)=\sum_{r=1}^{4n} \frac{i^r}{r!}C_{r,t}[u^{\otimes r}]+o(|u|^{4n}),
  \end{align*}
  around $u=0$, where
  \begin{align*}
    &C_{1,t}=E[\bar X_t^\epsilon|\mathcal{Y}_t^\epsilon]-A_{1,t}^{[1],\epsilon},\qquad
    C_{2,t}=\operatorname{Cov}(\bar X_t^\epsilon\mid\mathcal{Y}_t^\epsilon)-A_{2,t}^{[2],\epsilon},\\
    &C_{r,t}={\rm cum}_r(\bar X_t^\epsilon|\mathcal{Y}_t^\epsilon)\qquad (r\geq 3),
  \end{align*}
  and thus
  \begin{align*}
    \psi_t^\epsilon(u)^{-1}E[e^{iu\cdot\bar X_t^\epsilon}|\mathcal Y_t^\epsilon]
    =\sum_{\ell=0}^{4n}\frac1{\ell!}
      \left(\sum_{r=1}^{4n}\frac{i^r}{r!}C_{r,t}[u^{\otimes r}]\right)^\ell+o(|u|^{4n}).
  \end{align*}
  This yields, for $1\leq r\leq4n$, the identity
  \begin{align*}
    E\left[\left.\partial_u^r\left(
        \psi_t^\epsilon(u)^{-1}e^{iu\cdot\bar X_t^\epsilon}
      \right)\right|_{u=0}\,\middle|\,\mathcal Y_t^\epsilon\right]
      &=\left.\partial_u^r\left(E\left[
        \psi_t^\epsilon(u)^{-1}e^{iu\cdot \bar{X}_t^\epsilon}
      \,\middle|\,\mathcal Y_t^\epsilon\right]\right)\right|_{u=0}\\
    &=i^r r!\sum_{\ell=1}^r\frac1{\ell!}
      \sum_{\substack{r_1,\ldots,r_\ell\geq1\\r_1+\cdots+r_\ell=r}}
      \operatorname{Sym}_r\left(\bigotimes_{\nu=1}^\ell\frac{C_{r_\nu,t}}{r_\nu!}\right).
  \end{align*}
  For $r=0$, the expectation is $1$.
  Finally, applying \eqref{eq:expansion-cumulant} through order $n+r$ and dividing by $\epsilon^r$ gives the expansion of $C_{r,t}$ through order $\epsilon^n$.
  By Theorem \ref{theorem:A-high-rank-vanishing}, $A_{r,t}^{[q+r],\epsilon}=0$ for $r>q+2$.
  Substituting these expansions into \eqref{eq:finite-characteristic-Taylor} and collecting powers gives the stated coefficients $P_{r,t}^{[k+r],\epsilon}$ and the range $r\leq3k$, proving the conclusion.
\end{proof}
\begin{remark}
  Deriving an expansion of the conditional density of $X_t^\epsilon$ by Fourier inversion requires remainder estimates that justify integration over $u$, beyond estimates for each fixed $u$. Establishing such estimates is beyond the scope of this paper and is left for future work.
\end{remark}

\begin{example}
  Let us consider the case of $m_1=d_1$, $a(x,\epsilon)=a(x,0),\ b(x,\epsilon)=\epsilon I_{d_1}$, and $h(x,\epsilon)=Hx$, where $H$ is a fixed $d_2\times d_1$ matrix:
  \begin{align*}
    &dX_t^\epsilon=a(X_t^\epsilon,0)dt+\epsilon dV_t,\\
    &dY_t^\epsilon=HX_t^\epsilon dt+\sigma(t)dW_t.
  \end{align*}
  For this model, consider the asymptotic expansion of the conditional cumulant
  \begin{align*}
    {\rm cum}_r(X_t^\epsilon|\mathcal{Y}_t^\epsilon)
      =\sum_{k=0}^n A_{r,t}^{[k],\epsilon}\epsilon^k+O_P^T(\epsilon^{n+1}).
  \end{align*}
  In particular, $A_{1,t}^{[k],\epsilon}$ is the coefficient of $\epsilon^k$ in the asymptotic expansion of the conditional expectation $E[X_t^\epsilon|\mathcal{Y}_t^\epsilon]$.
  Here,
  \begin{align*}
    &dX_t^{[0]}=a_t^{[0,0]}dt,\qquad X_0^{[0]}=x_0,\\
    &d\nu_t^{\epsilon,\epsilon}
      =dY_t^\epsilon-H\bigl(X_t^{[0]}+\epsilon A_{1,t}^{[1],\epsilon}\bigr)dt.
  \end{align*}
  All the coefficients displayed below have zero initial values.

  The equations for degree 1 coefficients $A_{1,t}^{[1],\epsilon}$ and $A_{2,t}^{[2],\epsilon}$ are
  \begin{align*}
    dA_{1,t}^{[1],\epsilon}
      ={}&a_t^{[1,0]}A_{1,t}^{[1],\epsilon}dt
        +\epsilon A_{2,t}^{[2],\epsilon}H^\top R(t)^{-1}d\nu_t^{\epsilon,\epsilon},\\
    dA_{2,t}^{[2],\epsilon}
      ={}&\Bigl\{a_t^{[1,0]}A_{2,t}^{[2],\epsilon}
        +A_{2,t}^{[2],\epsilon}{a_t^{[1,0]}}^\top+I_{d_1}-\epsilon^2A_{2,t}^{[2],\epsilon}H^\top R(t)^{-1}H A_{2,t}^{[2],\epsilon}\Bigr\}dt.
  \end{align*}
  Write
  \begin{align*}
    \Gamma_t^\epsilon
      =a_t^{[1,0]}-\epsilon^2A_{2,t}^{[2],\epsilon}H^\top R(t)^{-1}H.
  \end{align*}
  Then, the equations for degree 2 coefficients $A_{1,t}^{[2],\epsilon}$, $A_{2,t}^{[3],\epsilon}$, and $A_{3,t}^{[4],\epsilon}$ are
  \begin{align*}
    dA_{1,t}^{[2],\epsilon}
      ={}&\Gamma_t^\epsilon[A_{1,t}^{[2],\epsilon}]dt
        +\frac12a_t^{[2,0]}
        \left[A_{2,t}^{[2],\epsilon}+\bigl(A_{1,t}^{[1],\epsilon}\bigr)^{\otimes2}\right]dt+\epsilon\bigl(H^\top R(t)^{-1}d\nu_t^{\epsilon,\epsilon}\bigr)[A_{2,t}^{[3],\epsilon}],\\
    dA_{2,t}^{[3],\epsilon}
      ={}&\operatorname{Sym}_2\Biggl(
        2\Gamma_t^\epsilon[A_{2,t}^{[3],\epsilon}]dt
        +2a_t^{[2,0]}
        \left[A_{1,t}^{[1],\epsilon}\otimes_{(1,1)}A_{2,t}^{[2],\epsilon}\right]dt
        +\epsilon\bigl(H^\top R(t)^{-1}d\nu_t^{\epsilon,\epsilon}\bigr)[A_{3,t}^{[4],\epsilon}]\Biggr),\\
    dA_{3,t}^{[4],\epsilon}
      ={}&\operatorname{Sym}_3\Biggl(
        3\Gamma_t^\epsilon[A_{3,t}^{[4],\epsilon}]
        +3a_t^{[2,0]}
        \left[A_{2,t}^{[2],\epsilon}\otimes_{(1,1)}A_{2,t}^{[2],\epsilon}\right]\Biggr)dt.
  \end{align*}
\end{example}

\section{Special case: expansion of unconditional expectations}
\label{section:unconditional}
In this section we consider the special case where $h=0$. In this case, $X^\epsilon$ and $Y^\epsilon$ are independent and thus the conditional expectation $E[\phi(X_t^\epsilon)|\mathcal{Y}_t^\epsilon]$ equals the unconditional expectation $E[\phi(X_t^\epsilon)]$. Our goal is to expand this expectation for the small-noise model
\begin{align}
  dX_t^\epsilon=a(X_t^\epsilon,\epsilon)dt+b(X_t^\epsilon,\epsilon)dV_t,~X_0^\epsilon=x_0,\label{eq:X-unconditonal}
\end{align}
where $x_0$ is a constant.

Asymptotic expansions of this type have been extensively studied in mathematical finance, particularly for option pricing; see, for example, \citet{takahashi1999contingent} and \citet{uchida2004option}. In such applications, $\phi$ represents a payoff and need not be smooth. For a nonsmooth function $\phi$, Malliavin calculus and the theory of generalized Wiener functionals developed by \citet{watanabe1987analysis} and \citet{yoshida1992statistics} provide a rigorous basis under suitable regularity and nondegeneracy conditions. 

One way to obtain explicit formulas is to expand the characteristic function of the normalized fluctuation around its leading Gaussian term, and apply Fourier inversion to obtain a density expansion \citep{takahashi1999contingent}. In particular, for a smooth function $g:\mathbb{R}^{d_1}\to \mathbb{R}$, the computation scheme of \citet{takahashi2012computation} can be described in terms of the following characteristic-function expansion of $G_T^\epsilon=g(X_T^\epsilon)$ at a fixed terminal time $T$:
\begin{align*}
  &E[e^{iu\cdot (G_T^\epsilon-G_T^{[0]})/\epsilon}]\\
  =&\psi_T(u)E\left[ \exp\left( iu (G_T^{[1]}-\mu_T)+\frac{1}{2}u^2\Gamma_T \right)\sum_{\ell=0}^n\frac{1}{\ell!}\left( iu\sum_{j=1}^n \frac{1}{(j+1)!}G_T^{[j+1]}\epsilon^j \right)^\ell \right]+O(\epsilon^{n+1}),
\end{align*}
where
\begin{align*}
  \partial_\epsilon^k G_T^{\epsilon}|_{\epsilon=0}=G_T^{[k]},~~\mu_T=E[G_T^{[1]}],~~\Gamma_T=\operatorname{Var}(G_T^{[1]}),
\end{align*}
and $\psi_T(u)=E[e^{iuG_T^{[1]}}]$ is the characteristic function of the Gaussian reference $G_T^{[1]}$. The required terminal quantities can be written as
\begin{align*}
  \eta_{(i_1,\cdots,i_k)}(T;u)=E\left[ \exp\left( iu (G_T^{[1]}-\mu_T)+\frac{1}{2}u^2\Gamma_T \right)G_T^{[i_1]}\cdots G_T^{[i_k]} \right]
\end{align*}
for $i_1,\cdots,i_k\geq 1$. Each of these quantities is a polynomial in $u$, as follows from the Hermite expansion used in the proof of Proposition \ref{prop:characteristic-expansion}. The authors derive a finite system of linear ODEs for polynomials in $u$ whose values at $t=T$ coincide with $\eta_{(i_1,\cdots,i_k)}(T;u)$. Consequently, differentiating their ODEs with respect to $u$ and evaluating at $u=0$ gives a finite system of linear ODEs for their polynomial coefficients; see \citet[Theorem~1, Remark~2, and equation~(18)]{takahashi2012computation}. Although their results are established for a scalar-valued function $g$, the same approach extends to multidimensional cases \citep[Remark~2]{takahashi2013extension}.

Similar computations are available in the R package \texttt{yuima}, where the function \texttt{ae} constructs asymptotic expansions of characteristic functions and the corresponding densities \citep{yuima2026manual}. Its implementation differs from the weighted-expectation ODEs above, and it uses the Hermite expansion of the Gaussian exponential factor to express the correction coefficients in terms of mixed moments of $X^{[j]}$. Applying It\^o's formula to their products and taking expectations yields a system of ODEs for these moments, which is generated symbolically and solved numerically \citep{guidotti2026ae}.

On the other hand, our approach constructs the same characteristic-function expansion coefficients through the cumulant coefficients $A_{r,t}^{[k]}$. Due to Corollary \ref{cor:cumulant-tower2} with $h=0$, the evolution of $A_{r,t}^{[k]}$ is given as follows.
\begin{proposition}
    For integers $1\leq r<k$, we have
    \begin{align*}
    d\mathsf A_{r,t}^{[k]}
    ={}\operatorname{Sym}_r\Biggl(&r a_t^{[1,0]}[\mathsf A_{r,t}^{[k]}]dt
    +r\sum_{\substack{q,\ell\geq0\\2\leq q+\ell\leq k-r+1}}\frac1{q!\ell!}
         a_t^{[\ell,q]}[\bar{\mathsf B}_{\ell,r-1,t}^{[k-q]}]dt\\
       &+\binom r2\sum_{\substack{q\geq2,\ \ell\geq0\\3\leq q+\ell\leq k-r+2}}\frac1{q!\ell!}
      \beta_t^{[\ell,q]}
        [\bar{\mathsf B}_{\ell,r-2,t}^{[k-q]}]dt\Biggr).
      \end{align*}
\end{proposition}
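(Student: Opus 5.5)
This is a direct specialization of Corollary \ref{cor:cumulant-tower2}, so the plan is to substitute $h\equiv0$ into that corollary and check that every observation-dependent term disappears. First I would record what $h=0$ implies. All derivatives $h_t^{[\ell,q]}$ vanish identically. By \eqref{eq:Hk}, $H_t^{[k]}=0$ for every $k$. Consequently $\tilde H_t^\lambda=0$, $\mathbb H_t(z)=0$, $\mathbb J_t(z)=1$, and $\mathsf Z_t^{\lambda,\epsilon}=\tilde Z_t^{\lambda,\epsilon}$ reduces to the exponential martingale of $\sigma\,dW$ under $\mathsf Q^{\lambda,\epsilon}$. Under this measure $V$ is independent of the observation, so $\tilde{\mathsf E}_t$ is simply the unconditional expectation of functionals of $V$. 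None of this is logically needed, since the corollary is already stated for general $h$, but it confirms that the coefficients $\mathsf A_{r,t}^{[k]}$ are then the unconditional cumulant coefficients and do not depend on $\lambda$.

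Next I would go through the right-hand side of Corollary \ref{cor:cumulant-tower2} term by term.
\begin{itemize}
\item In the first drift term, the correction $-\lambda^2\mathsf A_{2,t}^{[2]}(h_t^{[1,0]})^\top R(t)^{-1}h_t^{[1,0]}$ vanishes, leaving $r a_t^{[1,0]}[\mathsf A_{r,t}^{[k]}]$.
\item In the second drift term, the factor $h_t^{[\ell,q]}$ kills the $\lambda^2$ correction, leaving the $a_t^{[\ell,q]}[\bar{\mathsf B}_{\ell,r-1,t}^{[k-q]}]$ sum.
\item The $\beta$ term does not involve $h$ and is kept unchanged.
\item The $-\frac{\lambda^2}{2}$ quadratic term is bilinear in $h_t^{[\ell,q_1]}$ and $h_t^{[j,q_2]}$, so it vanishes.
\item The innovation term is linear in $h_t^{[1,0]}$ and $h_t^{[\ell,q]}$, so it also vanishes, whatever $\nu_t$ is.
\end{itemize}

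What remains is exactly the displayed ODE, with the outer $\operatorname{Sym}_r$ kept. The resulting equation has no stochastic integral, which is consistent with the $\mathsf A$'s being deterministic.

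The only point I would take care over is that Corollary \ref{cor:cumulant-tower2} is applicable when $h=0$. The setup of Section \ref{section:preliminaries} allows $h\equiv0$: it is smooth, and \eqref{eq-assumption-h} holds trivially. The integrability hypotheses of Theorem \ref{theorem:generalized-KS} used in Proposition \ref{prop:evolution-of-K} become easier because $\tilde H=0$ and $\mathbb J=1$. So no new estimate is required, and I do not expect any real obstacle.
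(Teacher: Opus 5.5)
Your proposal is correct and matches the paper's argument: the proposition is obtained by setting $h=0$ in Corollary~\ref{cor:cumulant-tower2}. This removes the $\lambda^2$ corrections in the drift, the quadratic $R(t)^{-1}$ term and the innovation term, and leaves the $\beta$ term unchanged. One small slip in your optional aside: with $h=0$ one has $\mathsf H^{\lambda,\epsilon}=\tilde H^\lambda=0$, so $\mathsf Z_t^{\lambda,\epsilon}=\tilde Z_t^{\lambda,\epsilon}\equiv1$ and $\mathsf Q^{\lambda,\epsilon}=P$, rather than an exponential martingale of $\sigma\,dW$; this does not affect the argument.
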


Also, note that \eqref{eq:expansion} becomes
\begin{align*}
  E[f(X_t^\epsilon)]=\sum_{k=0}^n E[F_t^{[k]}]\epsilon^k+O(\epsilon^{n+1})
  =\sum_{k=0}^n \frac1{k!}\partial_\epsilon^kE[f(X_t^\epsilon)]|_{\epsilon=0}\epsilon^k+O(\epsilon^{n+1})
\end{align*}
under $h=0$, and thus we have
\begin{align}
  A_{r,t}^{[k]}=\frac1{k!}\partial_{\epsilon}^k{\rm cum}_r(X_t^\epsilon)|_{\epsilon=0}=\frac{i^{-r}}{k!}\partial_{\epsilon}^k\partial_u^r\log E[e^{iu\cdot X_t^\epsilon}]|_{u=\epsilon=0}.\label{eq:unconditinal-cumulant}
\end{align}
Furthermore, if we have 
\begin{align*}
  a(x,\epsilon)=a(x,-\epsilon),~~b(x,\epsilon)=-b(x,-\epsilon),
\end{align*}
then $X_t^{\epsilon}$ and $X_t^{-\epsilon}$ have the same distribution. Hence, \eqref{eq:unconditinal-cumulant} implies $A_{r,t}^{[k]}=0$ when $k$ is odd. In particular, including $X_t^{[0]}$, the number of scalar variables needed to compute the expansion up to degree $m$ is at most
\begin{align}
  d_1+\sum_{j=1}^m\sum_{\substack{1\leq r\leq j+1\\j+r\text{ odd}}}
    \binom{d_1+r-1}{r},\qquad m\geq1.\label{eq:max-number}
\end{align}

\subsection*{Comparison of the numbers of ODE variables}
We compare the dimensions of the ODE systems used by the three approaches above to compute the characteristic-function expansion of $(X_T^\epsilon-X_T^{[0]})/\epsilon$ through order 4, using the model
\begin{align*}
  dX_{t,1}^\epsilon
    &=\left(0.1\sqrt{1+(X_{t,1}^\epsilon)^2}+0.05X_{t,1}^\epsilon\right)dt
      +0.5\epsilon\sqrt{1+(X_{t,1}^\epsilon)^2}\,dV_t,~~X_{0,1}^\epsilon=100\\
  dX_{t,2}^\epsilon&=X_{t,1}^\epsilon dt,~~X_{0,2}^\epsilon=0.
\end{align*}
We first examined the one-dimensional case by considering only $X_1^\epsilon$, and then computed the joint expansion of $(X_1^\epsilon,X_2^\epsilon)$ for the two-dimensional case. The terminal value $X_{T,1}^\epsilon$ and the average $X_{T,2}^\epsilon/T$ enter European and arithmetic Asian option payoffs, respectively. 

Table~\ref{tab:unconditional-dimensions} gives the resulting counts. For \texttt{yuima} version 1.15.35, the main-system dimensions were read from the output of \texttt{ae(..., verbose=TRUE)} after its built-in reduction, and the separate auxiliary ODE dimensions were recorded at the solver calls. 

For the method of \citet{takahashi2012computation}, with the multidimensional extension in \citet[Remark~2]{takahashi2013extension}, we implemented the weighted-moment equations as ODEs for their polynomial coefficients in $u$. Repeated products are identified and identically zero coefficients are removed. These are counts for this implementation, not dimensions reported by those authors or minimal-realization claims.

For our cumulant tower, the counts coincide with the theoretical bound in \eqref{eq:max-number}, with $m=n+1$ for expansion order $n$.

\begin{table}[htbp]
  \centering
  \caption{Numbers of scalar ODE variables for the same normalized characteristic-function expansion. In the \texttt{yuima} column, $a+b$ denotes the main system and the separate auxiliary system. In the Takahashi column, it denotes polynomial coefficients and auxiliary variables. The cumulant counts include the deterministic trajectory and Gaussian covariance.}
  \label{tab:unconditional-dimensions}
  \begin{tabular}{cc|r|r|r}
    State & order & \texttt{yuima} & Takahashi & Cumulant tower \\ \hline
    $X_1$ & 1 & $8+2$ & $5+3$ & 4 \\
    $X_1$ & 2 & $25+2$ & $20+3$ & 6 \\
    $X_1$ & 3 & $59+2$ & $57+3$ & 9 \\
    $X_1$ & 4 & $124+2$ & $136+3$ & 12 \\ \hline
    $(X_1,X_2)$ & 1 & $24+6$ & $14+7$ & 11 \\
    $(X_1,X_2)$ & 2 & $147+6$ & $98+7$ & 19 \\
    $(X_1,X_2)$ & 3 & $589+6$ & $438+7$ & 31 \\
    $(X_1,X_2)$ & 4 & $1888+6$ & $1541+7$ & 46
  \end{tabular}
\end{table}
The \texttt{yuima} auxiliary system evolves the trajectory again together with a fundamental matrix, so the two dimensions should not be interpreted as the dimension of a single simultaneous system. The Takahashi auxiliary variables comprise the trajectory, the nonconstant entries of the fundamental matrix, and the Gaussian covariance: $1+1+1$ in the one-dimensional case and $2+2+3$ in the two-dimensional case. The latter fundamental matrix has one identically zero entry and one identically unit entry. 

After dividing the state variables by 100 for numerical scaling, the coefficients reconstructed from the Takahashi and cumulant systems agree to within $10^{-12}$ in absolute value for every order in the table. The native \texttt{yuima} output, computed with 1000 time steps, agrees with these coefficients to within $5\times10^{-10}$ in the same scaling.

The cumulant tower substantially reduces the number of ODE variables in both examples, with a larger relative reduction at higher expansion orders. At order four in the two-dimensional example, the total number of variables, including auxiliary variables, is reduced by approximately $97.6\%$ relative to the \texttt{yuima} implementation and $97.0\%$ relative to the weighted-moment implementation described above.

\section*{Acknowledgments}
The author was supported by the RIKEN Special Postdoctoral Researcher Program.

\bibliographystyle{plainnat}
\bibliography{ref}
\end{document}

%% file: figures/cumulant_tower.tex
\begin{tikzpicture}[>=Stealth,y=1.05cm,font=\small,
  every node/.style={inner sep=1pt,anchor=base}]
  \node[anchor=base east] at (3.346,0.000) {$E[X_t^\epsilon\mid\mathcal Y_t^\epsilon]\! =\!$};
  \node at (3.800,0.000) {$X_t^{[0]}$};
  \node at (4.450,0.000) {$+$};
  \node (gmean) at (5.150,0.000) {$\epsilon A_1^{[1]}$};
  \node at (5.850,0.000) {$+$};
  \node (mean) at (6.550,0.000) {$\epsilon^2 A_1^{[2]}$};
  \node at (7.950,0.000) {$+\cdots$};

  \node[anchor=base east] at (3.346,-0.900) {$\operatorname{Cov}(X_t^\epsilon\mid\mathcal Y_t^\epsilon)\! =\!$};
  \node (gcov) at (6.550,-0.900) {$\epsilon^2 A_2^{[2]}$};
  \node at (7.250,-0.900) {$+$};
  \node (cov) at (7.950,-0.900) {$\epsilon^3 A_2^{[3]}$};
  \node at (9.350,-0.900) {$+\cdots$};

  \node[anchor=base east] at (3.346,-1.800) {${\rm cum}_3(X_t^\epsilon\mid\mathcal Y_t^\epsilon)\! =\!$};
  \node (gzero) at (7.950,-1.800) {$0$};
  \node at (8.650,-1.800) {$+$};
  \node (third) at (9.350,-1.800) {$\epsilon^4 A_3^{[4]}$};
  \node at (10.750,-1.800) {$+\cdots$};

  \node[anchor=base east] at (3.346,-2.700) {${\rm cum}_4(X_t^\epsilon\mid\mathcal Y_t^\epsilon)\! =\!$};
  \node (fourth) at (10.750,-2.700) {$0$};
  \node at (11.450,-2.700) {$+$};
  \node at (12.150,-2.700) {$\epsilon^6 A_4^{[6]}$};
  \node at (12.935,-2.700) {$+\cdots$};

  \draw[->,thick] (gmean)--(gcov);
  \draw[->,thick] (gcov)--(gzero);
  \draw[->,thick] (mean)--(cov);
  \draw[->,thick] (cov)--(third);
  \draw[->,thick] (third)--(fourth);
  \node[anchor=south east] at (3.346,0.650) {degree};
  \node[anchor=south] at (3.800,0.650) {0};
  \node[anchor=south] at (5.150,0.650) {1};
  \node[anchor=south] at (6.550,0.650) {2};
  \node[anchor=south] at (7.950,0.650) {$\cdots$};
  \draw[dashed,thin] (0.850,0.500)--(13.350,0.500);
\end{tikzpicture}

%% file: ref.bib
@book{Liptser2001,
  author    = {Liptser, Robert S. and Shiryaev, Albert N.},
  title     = {Statistics of Random Processes: I. General Theory},
  publisher = {Springer},
  address   = {Berlin, Heidelberg},
  year      = {2001}
}

@book{bain_crisan2009filtering,
  author    = {Bain, Alan and Crisan, Dan},
  title     = {Fundamentals of Stochastic Filtering},
  volume    = {3},
  publisher = {Springer},
  year      = {2009}
}

@article{kurisaki2026ks,
  author        = {Kurisaki, Masahiro},
  title         = {Asymptotic Expansion of the Kallianpur-Striebel Formula under Perturbations of Linear Models},
  journal       = {arXiv preprint arXiv:2608.05842},
  year          = {2026},
  archivePrefix = {arXiv},
  eprint        = {2608.05842},
  primaryClass  = {math.PR},
  doi           = {10.48550/arXiv.2608.05842},
  url           = {https://arxiv.org/abs/2608.05842}
}

@article{kurisaki2026recursions,
  author        = {Kurisaki, Masahiro},
  title         = {Finite-Dimensional Recursions for Small-Noise Expansions in Nonlinear Filtering},
  journal       = {arXiv preprint arXiv:2609.18229},
  year          = {2026},
  archivePrefix = {arXiv},
  eprint        = {2609.18229},
  primaryClass  = {math.PR},
  doi           = {10.48550/arXiv.2609.18229},
  url           = {https://arxiv.org/abs/2609.18229}
}

@article{kurisaki2026lorenz,
  author        = {Kurisaki, Masahiro and Takeda, Kota},
  title         = {A Second-Order Asymptotic Expansion for Nonlinear Filtering in the {Lorenz--63} System},
  journal       = {arXiv preprint arXiv:2610.12512},
  year          = {2026},
  archivePrefix = {arXiv},
  eprint        = {2610.12512},
  primaryClass  = {nlin.CD},
  doi           = {10.48550/arXiv.2610.12512},
  url           = {https://arxiv.org/abs/2610.12512}
}

@article{KURISAKI2026104997,
  title    = {Pathwise representation of the smoothing distribution in continuous-time linear Gaussian models},
  journal  = {Stochastic Processes and their Applications},
  volume   = {199},
  pages    = {104997},
  year     = {2026},
  issn     = {0304-4149},
  doi      = {https://doi.org/10.1016/j.spa.2026.104997},
  url      = {https://www.sciencedirect.com/science/article/pii/S0304414926001298},
  author   = {Masahiro Kurisaki}
}

@article{kalman1961,
  author  = {Kalman, Rudolf E. and Bucy, Richard S.},
  title   = {New Results in Linear Filtering and Prediction Theory},
  journal = {Journal of Basic Engineering},
  volume  = {83},
  number  = {1},
  pages   = {95--108},
  year    = {1961},
  doi     = {10.1115/1.3658902}
}

@article{stratonovich1960,
  author  = {Stratonovich, Ruslan L.},
  title   = {Conditional Markov Processes},
  journal = {Theory of Probability \& Its Applications},
  volume  = {5},
  number  = {2},
  pages   = {156--178},
  year    = {1960},
  doi     = {10.1137/1105015}
}

@article{kushner1964,
  author  = {Kushner, Harold J.},
  title   = {On the Differential Equations Satisfied by Conditional Probabilities of Markov Processes, with Applications},
  journal = {Journal of the Society for Industrial and Applied Mathematics, Series A: Control},
  volume  = {2},
  number  = {1},
  pages   = {106--119},
  year    = {1964},
  doi     = {10.1137/0302009}
}

@article{Zakai1969,
  author  = {Zakai, Moshe},
  title   = {On the Optimal Filtering of Diffusion Processes},
  journal = {Zeitschrift f{\"u}r Wahrscheinlichkeitstheorie und Verwandte Gebiete},
  volume  = {11},
  pages   = {230--243},
  year    = {1969},
  doi     = {10.1007/BF00536382}
}

@article{HAZEWINKEL1983331,
  author  = {Hazewinkel, M. and Marcus, S. I. and Sussmann, H. J.},
  title   = {Nonexistence of Finite-Dimensional Filters for Conditional Statistics of the Cubic Sensor Problem},
  journal = {Systems \& Control Letters},
  volume  = {3},
  number  = {6},
  pages   = {331--340},
  year    = {1983},
  doi     = {10.1016/0167-6911(83)90074-9}
}

@article{picard1991,
  author  = {Picard, Jean},
  title   = {Efficiency of the Extended Kalman Filter for Nonlinear Systems with Small Noise},
  journal = {SIAM Journal on Applied Mathematics},
  volume  = {51},
  number  = {3},
  pages   = {843--885},
  year    = {1991},
  doi     = {10.1137/0151042}
}

@article{2003OcDyn..53..343E,
  author  = {Evensen, Geir},
  title   = {The Ensemble Kalman Filter: Theoretical Formulation and Practical Implementation},
  journal = {Ocean Dynamics},
  volume  = {53},
  number  = {4},
  pages   = {343--367},
  year    = {2003},
  doi     = {10.1007/s10236-003-0036-9}
}

@article{Germani01021988,
  author  = {Germani, Alfredo and Piccioni, Mauro},
  title   = {Semi-Discretization of Stochastic Partial Differential Equations on \(\mathbb{R}^d\) by a Finite-Element Technique},
  journal = {Stochastics},
  volume  = {23},
  number  = {2},
  pages   = {131--148},
  year    = {1988},
  doi     = {10.1080/17442508808833486}
}

@incollection{lototsky2011chaos,
  author    = {Lototsky, Sergey V.},
  title     = {Chaos Approach to Nonlinear Filtering},
  booktitle = {The Oxford Handbook of Nonlinear Filtering},
  pages     = {231--264},
  publisher = {Oxford University Press},
  year      = {2011}
}

@article{gordon1993,
  author  = {Gordon, Neil J. and Salmond, David J. and Smith, Adrian F. M.},
  title   = {Novel Approach to Nonlinear/Non-Gaussian Bayesian State Estimation},
  journal = {IEE Proceedings F: Radar and Signal Processing},
  volume  = {140},
  number  = {2},
  pages   = {107--113},
  year    = {1993},
  doi     = {10.1049/ip-f-2.1993.0015}
}

@inproceedings{4378823,
  author    = {Crisan, Dan},
  title     = {Particle Filters in a Continuous Time Framework},
  booktitle = {2006 IEEE Nonlinear Statistical Signal Processing Workshop},
  pages     = {73--78},
  year      = {2006},
  doi       = {10.1109/NSSPW.2006.4378823}
}

@article{snyder2008obstacles,
  author  = {Snyder, Chris and Bengtsson, Thomas and Bickel, Peter and Anderson, Jeff},
  title   = {Obstacles to High-Dimensional Particle Filtering},
  journal = {Monthly Weather Review},
  volume  = {136},
  number  = {12},
  pages   = {4629--4640},
  year    = {2008},
  doi     = {10.1175/2008MWR2529.1}
}

@book{peccati2011wiener,
  author    = {Peccati, Giovanni and Taqqu, Murad S.},
  title     = {Wiener Chaos: Moments, Cumulants and Diagrams: A Survey with Computer Implementation},
  series    = {Bocconi \& Springer Series},
  volume    = {1},
  publisher = {Springer},
  address   = {Milan},
  year      = {2011},
  doi       = {10.1007/978-88-470-1679-8}
}

@article{watanabe1987analysis,
  author  = {Watanabe, Shinzo},
  title   = {Analysis of {Wiener} Functionals ({Malliavin} Calculus) and Its Applications to Heat Kernels},
  journal = {The Annals of Probability},
  volume  = {15},
  number  = {1},
  pages   = {1--39},
  year    = {1987},
  doi     = {10.1214/aop/1176992255}
}

@article{yoshida1992statistics,
  author  = {Yoshida, Nakahiro},
  title   = {Asymptotic Expansion for Statistics Related to Small Diffusions},
  journal = {Journal of the Japan Statistical Society},
  volume  = {22},
  number  = {2},
  pages   = {139--159},
  year    = {1992},
  doi     = {10.11329/jjss1970.22.139}
}

@article{takahashi1999contingent,
  author  = {Takahashi, Akihiko},
  title   = {An Asymptotic Expansion Approach to Pricing Financial Contingent Claims},
  journal = {Asia-Pacific Financial Markets},
  volume  = {6},
  number  = {2},
  pages   = {115--151},
  year    = {1999},
  doi     = {10.1023/A:1010080610650}
}

@article{uchida2004option,
  author  = {Uchida, Masayuki and Yoshida, Nakahiro},
  title   = {Asymptotic Expansion for Small Diffusions Applied to Option Pricing},
  journal = {Statistical Inference for Stochastic Processes},
  volume  = {7},
  number  = {3},
  pages   = {189--223},
  year    = {2004},
  doi     = {10.1023/B:SISP.0000049093.20850.11}
}

@article{takahashi2012computation,
  author  = {Takahashi, Akihiko and Takehara, Kohta and Toda, Masashi},
  title   = {A General Computation Scheme for a High-Order Asymptotic Expansion Method},
  journal = {International Journal of Theoretical and Applied Finance},
  volume  = {15},
  number  = {6},
  pages   = {1250044},
  year    = {2012},
  doi     = {10.1142/S0219024912500446}
}

@techreport{takahashi2013extension,
  author      = {Takahashi, Akihiko and Toda, Masashi},
  title       = {Note on an Extension of an Asymptotic Expansion Scheme},
  institution = {CIRJE, Faculty of Economics, The University of Tokyo},
  type        = {CIRJE Discussion Paper},
  number      = {CIRJE-F-860},
  year        = {2013},
  note        = {Revised April 9, 2013},
  url         = {https://www.cirje.e.u-tokyo.ac.jp/research/dp/2012/2012cf860.pdf}
}

@article{kushner1976finite,
  author  = {Kushner, Harold J.},
  title   = {Finite Difference Methods for the Weak Solutions of the {Kolmogorov} Equations for the Density of Both Diffusion and Conditional Diffusion Processes},
  journal = {Journal of Mathematical Analysis and Applications},
  volume  = {53},
  number  = {2},
  pages   = {251--265},
  year    = {1976},
  doi     = {10.1016/0022-247X(76)90109-8}
}

@article{lototsky1997spectral,
  author  = {Lototsky, Sergey and Mikulevicius, Remigijus and Rozovskii, Boris L.},
  title   = {Nonlinear Filtering Revisited: A Spectral Approach},
  journal = {SIAM Journal on Control and Optimization},
  volume  = {35},
  number  = {2},
  pages   = {435--461},
  year    = {1997},
  doi     = {10.1137/S0363012993248918}
}

@article{lototsky2006wiener,
  author  = {Lototsky, Sergey V.},
  title   = {{Wiener} Chaos and Nonlinear Filtering},
  journal = {Applied Mathematics and Optimization},
  volume  = {54},
  number  = {3},
  pages   = {265--291},
  year    = {2006},
  doi     = {10.1007/s00245-006-0871-4}
}

@book{evensenDataAssimilationEnsemble2009,
  author    = {Evensen, Geir},
  title     = {Data Assimilation: The Ensemble Kalman Filter},
  publisher = {Springer},
  address   = {Berlin, Heidelberg},
  edition   = {2},
  year      = {2009},
  doi       = {10.1007/978-3-642-03711-5}
}

@article{dewiljesLongTimeStability2018,
  author  = {de Wiljes, Jana and Reich, Sebastian and Stannat, Wilhelm},
  title   = {Long-Time Stability and Accuracy of the Ensemble {Kalman--Bucy} Filter for Fully Observed Processes and Small Measurement Noise},
  journal = {SIAM Journal on Applied Dynamical Systems},
  volume  = {17},
  number  = {2},
  pages   = {1152--1181},
  year    = {2018},
  doi     = {10.1137/17M1119056}
}

@manual{yuima2026manual,
  author = {{YUIMA Project Team}},
  title  = {{yuima}: The {YUIMA} Project Package for {SDEs}},
  year   = {2026},
  note   = {R package version 1.15.35; documentation for ae, aeCharacteristic, aeDensity, and yuima.ae-class},
  url    = {https://yuimaproject.r-universe.dev/yuima/doc/manual.html}
}

@misc{guidotti2026ae,
  author       = {Guidotti, Emanuele},
  title        = {Asymptotic Expansion Implementation in {YUIMA}},
  year         = {2026},
  howpublished = {R source file \texttt{R/ae.R}, YUIMA Project repository},
  note         = {Accessed October 15, 2026},
  url          = {https://github.com/yuimaproject/yuima/blob/master/R/ae.R}
}
